\documentclass[10pt,reqno]{amsart}
\usepackage{amsmath}
\usepackage{amssymb}
\usepackage{amsthm}
\usepackage{eepic,epic}
\usepackage{graphics,psfrag,graphicx,subfigure,comment,enumerate,tcolorbox}
\usepackage{xcolor}
\usepackage{tikz}
\usepackage{mathtools}

%\usepackage{epsfig}
%\usepackage{latexcad}

% PAGE SETTING ---------------------------------------

\textheight 22.5  true cm
\textwidth 15 true cm
\voffset -1.0 true cm
\hoffset -1.0 true cm
\marginparwidth= 2 true cm

% THEOREMS -------------------------------------------

\newtheorem{thm}{Theorem}[section]

\newtheorem{lem}[thm]{Lemma}

\theoremstyle{definition}

\theoremstyle{remark}
\newtheorem{rem}[thm]{Remark}
\numberwithin{equation}{section}

\renewcommand{\(}{\left(}
\renewcommand{\)}{\right)}

% MATH ------------------------------------------------

%-----------------------------------------------------------------

%\newcommand {\rcom}[1]{\textcolor{red}{#1}}

% ----------------------------------------------------------------

%%divergence macros%%%%%%%%%%%%
\def \div{\mbox{div}}

%%%Bold faces%%%%%%%%%%%%%%%%%%

\def \eb{{\bf e}}
\def \nb{{\bf n}}
\def \f{{\bf f}}

\def \x{{\bf x}}
\def \u{{\bf u}}
\def \v{{\bf v}}
\def \w{{\bf w}}
\def \z{{\bf z}}
%{{\bf R}}

%%Greek macros%%%%%%%%
\def \i{^{-1}}

\def \ch{\chi}
\def \d{\delta}
\def \eps{\epsilon}
\def \D{\Delta}

\def \l{\lambda}
\def \g{\gamma}
\def \th{\theta}

\def \G{\Gamma}
\def \o{\omega}
\def \O{\Omega}

\def \m{\mu}
\def \n{\nu}

\def \ps{\psi}

\def \ph{\phi}

%%%braces macros%%%%%%%%%%%%%%%%%%%%%%%

\def \pd{\partial}
\def \<{\langle}
\def \>{\rangle}
\def \nab{\nabla}

%%%%mixed macros%%%%
\def \nbr{\nonumber}

\def \neq{\not=}

%%%%%%%%tilde macros%%%%%%%%%%%%%%

%%bar macros%%%%%%%%%%%

 %{\bar{z}}

%{\overline{\mbox{d}}}
%%%%%%%%mixed macros%%%%%%%%%%%%%%%%%

\def \({\Big(}
\def \){\Big)}
\def \[{\Big[}
\def \]{\Big]}
\def \<{\langle}
\def \>{\rangle}

%%%%%%%mathrm%%%%%%%

%%%%%%%%Hat macros%%%%%%%%%%%%%%%

%%%%%%%%%%%%%%%%%%%%
%%%%%%%%%%%%%%%%%%%%

%%Cal macros%%%%%%%%

\def \B{{\mathcal B}}
\def \C{{\mathcal C}}

\def \E{{\mathcal E}}

\def \S{{\mathcal S}}

\def \div{\mathrm{div}}
\def \curl{\mathrm{curl}}
\def \and{\mbox{ and }}

\def \ol{\overline}
%
%
%%%%%%%%%%%%%%%%%%%%%%%%
%%%%%%%%%%%%Euler fonts%

\def \Rbb{\mathbb{R}}

\def \H{{\bf H}}

%

%

%

%
%

%\def \trh{\t^r_h}
%\def \srh{\s^r_h}

 %%choi
%

%
%
%%%%%%%%%%%%%%%%%%%%%%%%

%%%%%%%%%%%%%%%%%%%%%%%%%%%%%%%%%%

%%%%%%%%%%%%%%%%%%%%%%%%%%%%%%%%%%

%

%

%\def \trh{\t^r_h}
%\def \srh{\s^r_h}

\def\bPh{{\boldsymbol \Ph}}

\def\bPs{{\boldsymbol \Ps}}

\def\bT{{\boldsymbol \T}}

\begin{document}
\title[The incompressible limit of the corner singularities]{Convergence in the incompressible limit \\ of the corner singularities}

\author[Choi]{Hyung Jun Choi}
\address{School of Liberal Arts, Korea University of Technology and Education, Cheonan 31253, Republic of Korea}
\email{hjchoi@koreatech.ac.kr; choihjs@gmail.com}

\author[Kim]{Seonghak Kim}
\address{Department of Mathematics, College of Natural Sciences, Kyungpook National University, Daegu 41566, Republic of Korea}
\email{shkim17@knu.ac.kr}

\author[Koh]{Youngwoo Koh}
\address{Department of Mathematics Education, Kongju National University, Kongju 32588, Republic of Korea}
\email{ywkoh@kongju.ac.kr}

\subjclass[2020]{35A21, 76D03.}

\keywords{Penalty method; Corner singularity; Lam\'{e} system}
%\thanks{}
\maketitle

\begin{abstract}
In this paper, we treat the corner singularity expansion and its convergence result regarding the penalized system obtained by eliminating the pressure variable in the Stokes problem of incompressible flow.
The penalized problem is a kind of the Lam\'{e} system, so we first discuss the corner singularity theory of the Lam\'{e} system with inhomogeneous Dirichlet boundary condition on a non-convex polygon.
Considering the inhomogeneous condition, we show the decomposition of its solution, composed of singular parts and a smoother remainder near a re-entrant corner, and furthermore, we provide the explicit formulae of coefficients in singular parts. In particular, these formulae can be used in the development of highly accurate numerical scheme.
In addition, we formulate coefficients in singular parts regarding the Stokes equations with inhomogeneous boundary condition and non-divergence-free property of velocity field, and thus we show the convergence results of coefficients in singular parts and remainder regarding the concerned penalized problem.
\end{abstract}

%%%%%%%%%%%%%%%%%%%%%%%%%%%%%%%%%%%%%%%%%%%%%%%%%%%%%%%%%%%%%%%%%%%%%%%%%%%%%%%%%%%%%%%%%%%%%

\def \bPh{{\bold\Phi}}
\def \bPs{{\bold\Psi}}
\def \bT{{\bold T}}

\vspace{0.7cm}

\section{Introduction}\label{sec1}

\vspace{0.2cm}

In this paper, we consider the penalized system to the Stokes problem of incompressible flow with inhomogeneous Dirichlet boundary condition (cf. \cite{GRF}). Let $\Omega\subset\mathbb{R}^2$ be a non-convex polygon. Given a small penalty parameter $\eps>0$,
\begin{equation}\label{penalty_stokes}
\left\{\begin{aligned}
-\mu\Delta{\bf u}^\eps+\nabla p^\eps&={\bf f}&&\mbox{ in }\Omega,\\
\div\,\u^\eps&=-\eps p^\eps&&\mbox{ in }\Omega,\\
\u^\eps&={\bf g}&&\mbox{ on }\Gamma:=\partial\O,
\end{aligned}\right.
\end{equation}
where $\u^\eps$ is the approximated velocity field and $p^\eps$ is the approximated pressure; $\mu>0$ is the viscous number; $\f$ and ${\bf g}$ are given external functions.
By eliminating $p^\eps$ in \eqref{penalty_stokes}, we get the following equivalent second order elliptic problem for $\u^\eps$:
\begin{equation}\label{Lame_eps}
\left\{\begin{aligned}
-\mu\Delta\u^\eps-\frac{1}{\eps}\nabla\div\,\u^\eps&=\f&&\mbox{ in }\O,\\
\u^\eps&={\bf g}&&\mbox{ on }\Gamma.
\end{aligned}\right.
\end{equation}
The elimination of the pressure variable in \eqref{Lame_eps} reduces the number of degrees of freedom in the discretization, and thus this reduction provides a more efficient algorithm in the case that the calculation of velocity field is only required. Various studies of its related numerical schemes have been actively investigated (see \cite{ABDA,BSL,CHHF,FN,LA,LLSA,SA}).
In fact, the solution $\u^\eps$ to \eqref{Lame_eps} has a nearly incompressible property for all sufficiently small $\eps>0$, and in particular, its discrete solution converges slowly in the numerical scheme based on standard conforming finite elements. Such phenomena is called {\it numerical locking}; the detailed explanation of locking effects can be found in \cite{AD,BSO,BF,BSL}.

\vspace{0.1cm}

The reduced problem \eqref{Lame_eps} is a kind of the Lam\'{e} system in the isotropic elasticity model (see \cite{GL,GS,LLSA,NA}).
So, this paper concerns a corner singularity expansion of solution to the Lam\'{e} system. As shown in \cite{GL,GS,NA}, such concern and its related regularity issues were previously studied in the use of singular functions of the Lam\'{e} operator. In particular, P. Grisvard analyzed the singular behavior of solution to the Lam\'{e} system near the corners of a plane polygon in \cite{GS} and here, a description of the behavior of solution was also investigated near the edges of a polyhedron.
Historically, the previous references \cite{GL,GS,NA} mainly focused on general theories regarding a corner singularity expansion of the solution for fixed Lam\'{e} constants, but the behavior of its corner singularity solution to \eqref{Lame_eps} for very small $\eps$ was not considered.
On the other hand, it is crucial to investigate the case of very small $\eps$ in order to develop a numerical method overcoming the numerical locking in the penalized problem \eqref{Lame_eps}.
For this reason, in this paper, we make an effort to derive the convergence results of the singular parts and smoother remainder in the corner singularity expansion as the penalty parameter $\eps$ goes to zero.
To do this, we try to construct the explicit formulae of coefficients in the singular part. Actually, these formulae are crucial in studying the convergence of singular parts and that of remainder separately, and it is also confirmed that the formulae are well-defined by showing their a priori estimates.

\vspace{0.1cm}

\begin{figure}[htbp]
\begin{center}
\includegraphics[height=6cm]{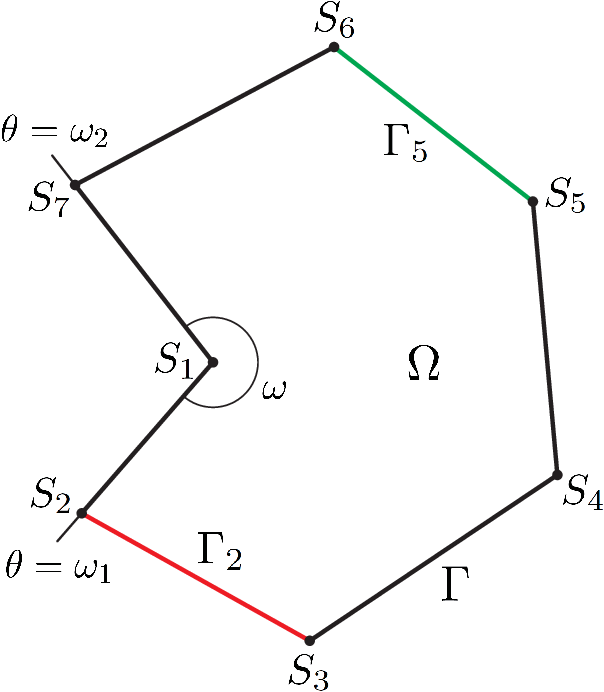}
\end{center}
\caption{The polygonal domain $\Omega$ with a re-entrant corner}\label{fig1}
\end{figure}
For simplicity, we assume in this paper that the boundary $\Gamma$ has only one non-convex vertex $S_1$ placed at the origin (see Figure \ref{fig1}). Denote by $S_j$ $(1\leq j\leq J)$ the vertices of $\Gamma$ with the convention that $S_{J+1}=S_1$. Let $\Gamma_j$ be the side of $\Gamma$ between $S_j$ and $S_{j+1}$, and ${\bf n}_j$ be the outward unit normal vector on $\G_j$. Set ${\bf g}_j:={\bf g}|_{\Gamma_j}$. Let $\omega$ be the re-entrant angle at $S_1$, defined by $\o:=\o_2-\o_1$, where $\o_i$'s are the numbers satisfying $\o_1+\pi<\o_2<\o_1+2\pi$.

\vspace{0.1cm}

First of all, we state the corner singularity expansion of solution to the penalized Lam\'{e} system \eqref{Lame_eps} and describe the explicit formulae of coefficients in its singular part, which will be proved in Section \ref{sec3}.

\begin{thm}\label{thm1.0}
If ${\bf f}\in{\bf L}^2(\Omega)$ and ${\bf g}_j\in{\bf H}^{3/2}(\Gamma_j)$ for $1\leq j\leq J$ satisfy
\begin{equation}\label{g-condition}
\sum_{j=1}^J\int_{\Gamma_j}{\bf g}_j\cdot{\bf n}_j \,dS=0\quad\mbox{ and }\quad{\bf g}_j(S_{j+1})={\bf g}_{j+1}(S_{j+1}),
\end{equation}
then there exists a unique solution $\u^\eps\in\H^1(\O)$ to \eqref{Lame_eps}.
Furthermore, the unique solution $\u^\eps$ can be decomposed as follows (see \cite{GS}): there exist a smoother vector field $\w^\eps\in\H^2(\O)$ and numbers $c^\eps_1$, $c^\eps_2\in\mathbb{R}$ such that
\begin{equation}\label{decomposition_eps}
\u^\eps=\w^\eps+\sum_{i=1}^{2}c^\eps_i\,\bPh_i^\eps,
\end{equation}
where $\bPh_i^\eps\not\in\H^2(\O)$ denotes a singular function for the Lam\'{e} differential operator (to be explicitly defined in Section \ref{sec3}).
In particular, the numbers $c^\eps_1$ and $c^\eps_2$ have the following formulae:
\begin{equation}\label{coeff_eps_formula}
c^\eps_1=\frac{\C_1^\eps(\f,{\bf g})}{\g^\eps_1}\quad\mbox{and}\quad c_2^\eps=\frac{\C_2^\eps(\f,{\bf g})+c_1^\eps\C_*^\eps}{\g_2^\eps},
\end{equation}
where
\begin{equation}\label{gamma_eps}
\begin{aligned}
\g_i^\eps =& \,\m\int_{\o_1}^{\o_2}\left(2\l_i^\eps\,\bT_i^\eps(\th)\cdot{\widetilde\bT}_i^{\eps}(\th)\right)d\th+\eps^{-1}\int_{\o_1}^{\o_2}\left(I_{i}^\eps(\th)+J_{i}^\eps(\th)\right)d\th,\\
\mathcal{C}_i^\eps(\f,{\bf g}) =&\, \int_\O\f\cdot\left({\widetilde\bPh}_i^\eps+\bPs_i^\eps\right)d\x\\
&\, -\sum_{j=1}^J\int_{\G_j}\left(\m\,{\bf g}_j\cdot\frac{\partial}{\partial{\bf n}_j}\left({\widetilde\bPh}_i^\eps+\bPs_i^\eps\right)+\eps^{-1}\left({\bf g}_j\cdot{\bf n}_j\right)\div\left({\widetilde\bPh}_i^\eps+\bPs_i^\eps\right)\right)dS,\\
\mathcal{C}_*^\eps= &\, \sum_{j=2}^{J-1}\int_{\G_j}\left(\m\,\bPh_1^\eps\cdot\frac{\partial}{\partial\nb_j}\left({\widetilde\bPh}_2^\eps+\bPs_2^\eps\right)+\eps^{-1}\left(\bPh_1^\eps\cdot\nb_j\right)\div\left({\widetilde\bPh}_2^\eps+\bPs_2^\eps\right)\right)dS,\\
I_{i}^\eps(\th)=&\,2\l_i^\eps[\bT_i^\eps(\th)\cdot\eb_r(\th)][{\widetilde\bT}_i^\eps(\th)\cdot\eb_r(\th)],\\
J_{i}^\eps(\th)=&\,[\pd_\th\bT_i^\eps(\th)\cdot\eb_\th(\th)][{\widetilde\bT}_i^\eps(\th)\cdot\eb_r(\th)]-[\bT_i^\eps(\th)\cdot\eb_r(\th)][\pd_\th{\widetilde\bT}_i^\eps(\th)\cdot\eb_\th(\th)].
\end{aligned}
\end{equation}
Here, $\l_i^\eps\in(1/2,1)$ is an eigenvalue regarding the Lam\'{e} operator, and $\bT_i^\eps(\th)$ is a corresponding eigenfunction.
Also, ${\widetilde\bPh}_i^\eps$ and ${\widetilde\bT}_i^\eps(\th)$ are obtained by replacing $\l_i^\eps$ with $-\l_i^\eps$ in the definition of $\bPh_i^\eps$ and $\bT_i^\eps(\th)$, respectively.
In addition, $\bPs_i^\eps\in{\bf H}^1(\Omega)$ is the weak solution to
\begin{equation}\label{Psi-eps-prob}
\left\{\begin{aligned}
 -\mu\Delta\bPs_i^\eps-\eps^{-1}\nabla\div\,\bPs_i^\eps&={\bf 0}&&\mbox{ in }\Omega,\\
 \bPs_i^\eps&={\bf 0}&&\mbox{ on }\Gamma_j,\quad j=1,J,\\
 \bPs_i^\eps&=-{\widetilde\bPh}_i^\eps&&\mbox{ on }\Gamma_j,\quad j=2,\ldots,J-1.
\end{aligned}\right.
\end{equation}
\end{thm}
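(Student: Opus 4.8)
The plan is to treat the three assertions of the theorem in turn: existence and uniqueness, the qualitative decomposition, and then the explicit coefficient formulae. For existence and uniqueness of $\u^\eps\in\H^1(\O)$, I would set up the weak formulation of \eqref{Lame_eps} on the affine subspace of $\H^1(\O)$ matching the boundary data ${\bf g}$, reducing to a homogeneous problem by subtracting an $\H^1$-extension of ${\bf g}$ (this is where the compatibility conditions \eqref{g-condition} are used: the first ensures the extension can be chosen so the divergence constraint is consistent, the second guarantees ${\bf g}$ is continuous at the vertices and hence admits an $\H^{3/2}$-extension on the polygon). The bilinear form $a_\eps(\u,\v)=\mu\int_\O\nab\u:\nab\v\,d\x+\eps^{-1}\int_\O(\divu)(\divv)\,d\x$ is continuous and, by Korn's inequality together with the first Poincaré inequality on $\H^1_0$, coercive on $\H^1_0(\O)$ for each fixed $\eps>0$; Lax--Milgram then yields the unique solution. (Note the coercivity constant degenerates as $\eps\to0$, but for this theorem $\eps$ is fixed.)

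For the decomposition \eqref{decomposition_eps}, I would invoke the corner-singularity theory for the Lam\'e system on a non-convex polygon, exactly as in Grisvard \cite{GS}. Rewriting \eqref{Lame_eps} as a Lam\'e system with Lam\'e parameters $\lambda_{\mathrm{L}}=\eps^{-1}$ and $\mu_{\mathrm{L}}=\mu$ (equivalently a fixed Poisson ratio), one analyzes the operator pencil at $S_1$: the $\th$-dependent eigenvalue problem on the wedge $(\o_1,\o_2)$ produces eigenvalues $\l$, and those with $\Re\l\in(0,1)$ (equivalently $\l_i^\eps\in(1/2,1)$ for the non-convex angle $\o\in(\pi,2\pi)$) give rise to the singular functions $\bPh_i^\eps$, which in polar coordinates near $S_1$ have the form $r^{\l_i^\eps}\bT_i^\eps(\th)$ up to a cutoff. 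Since $\f\in\Lbf^2$ and ${\bf g}_j\in\H^{3/2}(\G_j)$, the abstract regularity result says the solution minus the finite sum of these singular contributions lies in $\H^2(\O)$; because only $S_1$ is non-convex, at most two such eigenvalues occur, giving the two terms. The number of such singular functions, and the fact that $\bPh_i^\eps\notin\H^2$, are precisely the content of the known theory, which I am allowed to cite.

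The substantive part is deriving the formulae \eqref{coeff_eps_formula}--\eqref{gamma_eps} for $c_1^\eps$ and $c_2^\eps$. The strategy is a duality/biorthogonality argument: one tests the equation against the "dual singular functions" built from the eigenvalue $-\l_i^\eps$ of the same pencil. Concretely, I would form the pairing of $-\mu\D\u^\eps-\eps^{-1}\nab\divu^\eps=\f$ with the dual singular field $\widetilde\bPh_i^\eps+\bPs_i^\eps$, where $\bPs_i^\eps$ is introduced precisely to correct the boundary values of $\widetilde\bPh_i^\eps$ on the sides $\G_2,\dots,\G_{J-1}$ (so that the corrected field vanishes on all sides except possibly the two adjacent to $S_1$, matching the boundary behavior needed for the Green-type identity). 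Integrating by parts twice and using that $\bPh_i^\eps$ and $\w^\eps$ contribute, respectively, a computable boundary residue near $S_1$ and nothing singular (being $\H^2$), one gets a Green's formula whose bulk term over $\O$ collapses — because $\widetilde\bPh_i^\eps+\bPs_i^\eps$ is a homogeneous solution of the adjoint (here self-adjoint) operator — to a line integral over a small circle around $S_1$ whose radius can be sent to $0$. In that limit the orthogonality of the $\th$-eigenfunctions kills the cross terms $i\neq j$ and leaves the normalization constant $\g_i^\eps$ (the $\th$-integral in \eqref{gamma_eps}, whose two pieces are the $\mu$-part and the $\eps^{-1}$-part coming from the two terms of the Lam\'e operator), while the right-hand side produces $\C_i^\eps(\f,{\bf g})$ from the interior and boundary data, and the coupling term $\C_*^\eps$ from the fact that $\bPh_1^\eps$ does not vanish on $\G_2,\dots,\G_{J-1}$ (hence it enters the formula for $c_2^\eps$). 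Solving the resulting triangular linear system for $(c_1^\eps,c_2^\eps)$ gives \eqref{coeff_eps_formula}. The main obstacle will be the bookkeeping in this double integration by parts: carefully isolating the singular circle-integral, verifying that $J_i^\eps$ arises from the non-self-adjoint-looking boundary terms of the $\eps^{-1}\nab\div$ piece in polar coordinates (the $\pd_\th\bT$ terms), and checking that $\g_i^\eps\neq0$ so the formulae are well-defined — the latter being an a priori estimate on the $\th$-eigenproblem that the introduction promises to establish.
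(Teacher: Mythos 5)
Your proposal matches the paper's architecture: Lax--Milgram (Korn + Poincar\'e) for well-posedness, reduction to homogeneous boundary data plus citation of Lam\'e corner-singularity theory for the decomposition \eqref{decomposition_eps}, and a Green-type identity against the corrected dual field $\widetilde\bPh_i^\eps+\bPs_i^\eps$ over $\O_\eps$ with $\eps\to0^+$ to extract the coefficients. The one place where the implementation genuinely diverges is in how the $i\neq j$ cross-terms on the shrinking circle are suppressed and, correspondingly, where the coupling constant $\C_*^\eps$ comes from. You posit a biorthogonality of the angular eigenfunctions; the paper avoids any such identity. When extracting $c_1^\eps$, the paper kills the $i=2$ cross-term by the explicit factor $\eps^{\l_2^\eps-\l_1^\eps}\to0$, and then obtains $c_2^\eps$ not by testing the original equation, but by testing the problem satisfied by $\u^\eps-c_1^\eps\bPh_1^\eps$, whose boundary data on $\G_2,\dots,\G_{J-1}$ is shifted by $-c_1^\eps\bPh_1^\eps$. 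It is exactly this shift that produces $\C_*^\eps$ in \eqref{coeff_eps_formula}. As you describe your route --- direct test of the unmodified equation plus angular orthogonality --- the circle integral would yield simply $c_2^\eps\g_2^\eps=\C_2^\eps(\f,{\bf g})$, so you would need either the paper's subtraction device to reach \eqref{coeff_eps_formula}, or a separate argument that the two expressions agree (which requires proving both the pencil biorthogonality you assert and the compatibility of $\C_*^\eps$ with it). Finally, the estimates needed to make the $\w^\eps$- and $\bPs_i^\eps$-contributions on the small circle vanish use the additional pointwise condition ${\bf g}_1(S_1)={\bf g}_J(S_1)={\bf 0}$ (present in Theorem~\ref{thm3.2}, absent from the statement of Theorem~\ref{thm1.0}); your outline should flag this when the "bookkeeping" is carried out, since without it the angular integration by parts used to control $\int_{\G_\eps}\w^\eps\cdot\partial_{\bf n}\widetilde\bPh_i^\eps\,dS$ does not close.
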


\vspace{0.1cm}

\begin{rem}\label{rk1.2}
Considering the expressions of $\gamma_i^\eps$, $\C_i^\eps(\f,{\bf g})$ and $\C_*^\eps$ in \eqref{gamma_eps}, it is expected that they may blow up as $\eps\rightarrow 0^+$, due to the presence of the term $\eps^{-1}$ in their formulae.
To avoid such blow-up phenomena, we try not to use a cutoff function for $\bPh_i^\eps$ and ${\widetilde\bPh}_i^\eps$ in this paper (cf. \cite{CKT}).
Without using a cutoff function, the following calculation holds:
\begin{equation}\label{remark-eq1}
\begin{aligned}
I_i^\eps(\th)+J_i^\eps(\th)&=\eps\times\mathcal{K}_i^\eps(\th),\\
\div\,{\widetilde\bPh}_i^\eps&=\eps\times r^{-\l_i^\eps-1}{\widetilde{\mathcal{K}}}_i^\eps(\th),
\end{aligned}
\end{equation}
where $\mathcal{K}_i^\eps(\th)$ and ${\widetilde{\mathcal{K}}}_i^\eps(\th)$ denote uniformly bounded functions for all $\eps>0$, explicitly described in \eqref{gamma1-2}, \eqref{gamma2-2} and \eqref{eps_div_dual}.
It is also observed that
\begin{equation}\label{remark-eq2}
\|\div\,\bPs_i^\eps\|_{L^2(\Gamma_j)}\leq C\eps\quad\mbox{for some constant $C>0$},
\end{equation}
which is mentioned in \eqref{G_plus_ij}.
By \eqref{remark-eq1} and \eqref{remark-eq2}, the quantities $\gamma_i^\eps$, $\C_i^\eps(\f,{\bf g})$ and $\C_*^\eps$ do not blow up even though $\eps$ is nearly zero. Thus, the formulae of $c_1^\eps$ and $c_2^\eps$ in \eqref{coeff_eps_formula} are well-defined.
\end{rem}

\vspace{0.1cm}

Next, we intend to generalize some corner singularity results of the Stokes equations in \cite{CKT} by  considering further the inhomogeneous boundary condition.
This generalization is necessary in regard to the convergence of $\w^\eps$ and $c_i^\eps$ in \eqref{decomposition_eps}.
Concerning the penalized problem \eqref{penalty_stokes}, we note that its solution $[\u^\eps,\,p^\eps]$ converges to the solution satisfying the Stokes problem of incompressible flow in ${\bf H}^1(\O)\times{\rm L}^2(\O)$ as $\eps\rightarrow 0^+$ (see Theorem 5.3 in \cite[Chapter I]{GRF}); so it is also expected that the singular parts and remainder of $[\u^\eps,\,p^\eps]$ satisfying \eqref{penalty_stokes} converge to those of the Stokes equations, respectively.
Now, we consider the following Stokes equations with non-divergence-free property and inhomogeneous boundary condition:
\begin{equation}\label{Stokes}
\left\{\begin{aligned}
-\mu\Delta{\bf u}^{{\rm s}}+\nabla p^{{\rm s}}&={\bf f}&&\mbox{ in }\Omega,\\
\div\,\u^{{\rm s}}&=\zeta&&\mbox{ in }\Omega,\\
\u^{{\rm s}}&={\bf g}&&\mbox{ on }\Gamma,
\end{aligned}\right.
\end{equation}
where $\u^{{\rm s}}$ is the velocity field and $p^{{\rm s}}$ is the pressure; $\mu>0$ is the viscous number; $\f$, $\zeta$ and ${\bf g}$ are given functions with $\int_\O \zeta\,d\x=\int_\Gamma\left({\bf g}\cdot{\bf n}\right)dS$.

\vspace{0.1cm}

As similar to Theorem \ref{thm1.0}, we state the corner singularity expansion of solution to the Stokes problem \eqref{Stokes} and also derive the formulae of coefficients in singular parts, which will be shown in Section \ref{sec4}.

\begin{thm}\label{thm1.3}
If $\f\in{\bf L}^2(\O),$ $\zeta\in{\rm H}^{1}(\O)$ with $\zeta(S_1)=0$ and ${\bf g}_j\in{\bf H}^{3/2}(\Gamma_j)$ for $1\leq j\leq J$ with \eqref{g-condition} and ${\bf g}_1(S_1)={\bf g}_J(S_1)={\bf 0}$, then there exists a unique solution pair $[\u^{{\rm s}},\,p^{{\rm s}}]\in{\bf H}^1(\O)\times{\rm L}_0^2(\O)$ of the Stokes problem \eqref{Stokes} that can be written as follows: for some numbers $c^{\rm s}_1, c^{\rm s}_2\in\mathbb{R}$ and pair $\left[\w^{\rm s},\,\sigma^{\rm s}\right]\in{\bf H}^2(\O)\times{\rm H}^{1}(\O),$
\begin{equation}\label{Stokes-decomposition-sec1}
\left[\u^{{\rm s}},\,p^{{\rm s}}\right]=\left[\w^{\rm s},\,\sigma^{\rm s}\right]+\sum_{i=1}^{2}c^{\rm s}_i\left[\bPh^{\rm s}_i,\,\phi^{\rm s}_i\right],
\end{equation}
where $\left[\bPh_i^{\rm s},\,\phi_i^{\rm s}\right]\not\in\H^2(\O)\times{\rm H}^1(\O)$ denotes a singular function pair of the Stokes differential operator (to be defined explicitly in Section \ref{sec4}).
Furthermore, the coefficients $c^{\rm s}_1$ and $c^{\rm s}_2$ in \eqref{Stokes-decomposition-sec1} have the following formulae:
\begin{equation}\label{Stokes-sif-formula-sec1}
 c^{\rm s}_1=\frac{\mathcal{C}^{\rm s}_1(\f,\zeta,{\bf g})}{\g^{\rm s}_1}\quad\mbox{and}\quad
 c^{\rm s}_2=\frac{\mathcal{C}^{\rm s}_2(\f,\zeta,{\bf g})+c^{\rm s}_1\mathcal{C}^{\rm s}_*}{\g^{\rm s}_2},
\end{equation}
where
\begin{equation}\label{Stokes-C}
\begin{aligned}
\g^{\rm s}_i:=&\,\int_{\o_1}^{\o_2}\left(2\kappa_i\bT^{\rm s}_i(\th)\cdot{\widetilde\bT}^{\rm s}_i(\th)-\xi_i^{\rm s}(\th)\,[{\widetilde\bT}^{\rm s}_i(\th)\cdot\eb_r(\th)]+[\bT_i^{\rm s}(\th)\cdot\eb_r(\th)]\,{\widetilde\xi}^{\rm s}_i(\th)\right)d\th,\\
\mathcal{C}^{\rm s}_i(\f,\zeta,{\bf g}):=&\,\int_\O\left(\f\cdot\left(\mu{\widetilde\bPh}^{\rm s}_i+\bPs^{\rm s}_i\right)-\zeta\left(\mu{\widetilde\ph}^{\rm s}_i+\ps^{\rm s}_i\right)\right)d\x\\
 &\,-\sum_{j=1}^J\int_{\G_j}\left(\m\,{\bf g}_j\cdot\frac{\partial}{\partial{\bf n}_j}\left(\mu{\widetilde\bPh}^{\rm s}_i+\bPs^{\rm s}_i\right)-\left({\bf g}_j\cdot{\bf n}_j\right)\left(\mu{\widetilde\ph}^{\rm s}_i+\ps^{\rm s}_i\right)\right)dS,\\
 \mathcal{C}^{\rm s}_*:=&\,\sum_{j=2}^{J-1}\int_{\G_j}\left(\m\bPh^{\rm s}_1\cdot\frac{\partial}{\partial\nb_j}\left(\mu{\widetilde\bPh}^{\rm s}_2+\bPs^{\rm s}_2\right)-\left(\bPh^{\rm s}_1\cdot\nb_j\right)\left(\mu{\widetilde\ph}^{\rm s}_2+\ps^{\rm s}_2\right)\right)dS,
\end{aligned}
\end{equation}
and $\left[\bPs^{\rm s}_i,\,\psi^{\rm s}_i\right]\in{\bf H}^1(\Omega)\times{\rm L}_0^2(\Omega)$ is the weak solution pair of
\begin{equation}\label{Stokes-ps}
\left\{\begin{aligned}
 -\mu\Delta\bPs^{\rm s}_i+\nabla\psi^{\rm s}_i&={\bf 0}&&\mbox{ in }\Omega,\\
 \div\,\bPs^{\rm s}_i&=0&&\mbox{ in }\Omega,\\
 \bPs^{\rm s}_i&={\bf 0}&&\mbox{ on }\Gamma_j,\quad j=1,J,\\
 \bPs^{\rm s}_i&=-\mu{\widetilde\bPh}^{\rm s}_i&&\mbox{ on }\Gamma_j,\quad j=2,\cdots,J-1.
\end{aligned}\right.
\end{equation}
Here, $\kappa_i\in(1/2,1)$ denotes an eigenvalue of the Stokes differential operator, and $\left[\bT_i^{\rm s}(\th),\,\xi_i^{\rm s}(\th)\right]$ is a corresponding eigenfunction pair.
Also, $[{\widetilde\bPh}_i^{\rm s},\,{\widetilde\phi}_i^{\rm s}]$ and $[{\widetilde\bT}_i^{\rm s}(\th),\,{\widetilde\xi}_i^{\rm s}(\th)]$ are obtained by replacing $\kappa_i$ with $-\kappa_i$ in the definition of $[\bPh_i^{\rm s},\,\phi_i^{\rm s}]$ and $[\bT_i^{\rm s}(\th),\,\xi_i^{\rm s}(\th)]$, respectively.
\end{thm}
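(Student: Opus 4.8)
The plan is to follow the architecture of the proof of Theorem~\ref{thm1.0}, with the penalized Lam\'e operator replaced by the Stokes operator and the pressure carried along throughout. First I would settle the well-posedness of \eqref{Stokes}: lifting ${\bf g}$ by an ${\bf H}^2(\O)$ field (admissible because the ${\bf g}_j\in{\bf H}^{3/2}(\Gamma_j)$ are compatible at the vertices through \eqref{g-condition} and ${\bf g}_1(S_1)={\bf g}_J(S_1)={\bf 0}$) and removing the divergence constraint $\zeta\in{\rm H}^1(\O)$ by a Bogovskii-type right inverse of $\div$ reduces \eqref{Stokes} to a homogeneous Stokes system, whose unique solvability in ${\bf H}^1(\O)\times{\rm L}^2_0(\O)$ is classical once $\int_\O\zeta\,d\x=\int_\Gamma({\bf g}\cdot{\bf n})\,dS$. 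The decomposition \eqref{Stokes-decomposition-sec1}, with $[\w^{\rm s},\sigma^{\rm s}]\in{\bf H}^2(\O)\times{\rm H}^1(\O)$ and exactly two singular pairs attached to the sole re-entrant vertex $S_1$, then comes from the Grisvard-type corner singularity theory for the Stokes system (cf.\ \cite{GS,CKT}): in the strip $\mathrm{Re}\,\lambda\in(1/2,1)$ the Stokes operator pencil at $S_1$ has precisely the two exponents $\kappa_1\le\kappa_2$ (counted with multiplicity), all other vertices being convex and the data being ${\bf H}^{3/2}/{\rm H}^1$ on each side. At this point I would fix, once and for all, the normalization of the homogeneous singular pairs $[\bPh_i^{\rm s},\phi_i^{\rm s}]=[r^{\kappa_i}\bT_i^{\rm s}(\th),\,r^{\kappa_i-1}\xi_i^{\rm s}(\th)]$ and of their duals $[\widetilde\bPh_i^{\rm s},\widetilde\phi_i^{\rm s}]$ obtained by $\kappa_i\mapsto-\kappa_i$; in particular all four angular eigenfunctions vanish at $\th=\o_1,\o_2$, so $\bPh_i^{\rm s}$ and $\widetilde\bPh_i^{\rm s}$ vanish identically on the rays $\Gamma_1$ and $\Gamma_J$, and $[\bPh_i^{\rm s},\phi_i^{\rm s}]$, $[\widetilde\bPh_i^{\rm s},\widetilde\phi_i^{\rm s}]$ solve the homogeneous Stokes system in $\O$ with no cutoff.

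Next I would verify that the auxiliary pairs $[\bPs_i^{\rm s},\psi_i^{\rm s}]$ of \eqref{Stokes-ps} are well-defined in ${\bf H}^1(\O)\times{\rm L}^2_0(\O)$: since $\Gamma_2,\dots,\Gamma_{J-1}$ do not touch $S_1$, the datum $-\mu\widetilde\bPh_i^{\rm s}$ is smooth up to those closed sides, and it matches the datum ${\bf 0}$ prescribed on $\Gamma_1,\Gamma_J$ at the junctions $S_2$ and $S_J$ because $\widetilde\bPh_i^{\rm s}$ vanishes on $\overline{\Gamma_1}$ and $\overline{\Gamma_J}$; hence the full boundary datum is in ${\bf H}^{3/2}(\partial\O)$ with zero flux, and standard Stokes regularity applies. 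The heart of the proof is the extraction formula \eqref{Stokes-sif-formula-sec1}, obtained by a duality argument mirroring the one for $c_1^\eps,c_2^\eps$ in Theorem~\ref{thm1.0}. To compute $c_1^{\rm s}$, I would test the first two equations of \eqref{Stokes} against the pair $[\mu\widetilde\bPh_1^{\rm s}+\bPs_1^{\rm s},\,\mu\widetilde\phi_1^{\rm s}+\psi_1^{\rm s}]$ through the symmetric Green identity for the Stokes operator, carried out on $\O_\rho:=\O\setminus\overline{B_\rho(S_1)}$ — the test velocity is only ${\bf L}^2$, not ${\bf H}^1$, near $S_1$, which forbids integration by parts on all of $\O$. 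By the previous paragraph this test pair solves the homogeneous Stokes system in $\O$ and its velocity vanishes on all of $\partial\O$, so in Green's identity every boundary term carrying the trace or the normal derivative of $\u^{\rm s}$ over $\partial\O$ drops, and what survives is the volume pair $\int_{\O_\rho}(\f\cdot(\mu\widetilde\bPh_1^{\rm s}+\bPs_1^{\rm s})-\zeta(\mu\widetilde\phi_1^{\rm s}+\psi_1^{\rm s}))\,d\x$, the data integrals over $\partial\O\setminus\overline{B_\rho(S_1)}$ involving ${\bf g}$, and the arc integral over $\partial B_\rho(S_1)\cap\O$. Letting $\rho\to0^+$, the first two converge to the volume and boundary parts of $\mathcal{C}_1^{\rm s}(\f,\zeta,{\bf g})$ in \eqref{Stokes-C} — here $\zeta(S_1)=0$ makes $\zeta\,\widetilde\phi_1^{\rm s}\sim r^{-\kappa_1}$ locally integrable and ${\bf g}_1(S_1)={\bf g}_J(S_1)={\bf 0}$ makes $\int_{\Gamma_1\cup\Gamma_J}{\bf g}\cdot\partial_{\bf n}(r^{-\kappa_1}\cdots)$ absolutely convergent.

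On the arc I would insert $\u^{\rm s}=\w^{\rm s}+c_1^{\rm s}\bPh_1^{\rm s}+c_2^{\rm s}\bPh_2^{\rm s}$ and $p^{\rm s}=\sigma^{\rm s}+c_1^{\rm s}\phi_1^{\rm s}+c_2^{\rm s}\phi_2^{\rm s}$: the $[\w^{\rm s},\sigma^{\rm s}]$-contribution vanishes as $\rho\to0^+$ (using $\w^{\rm s}\in{\bf H}^2(\O)\hookrightarrow C^{0,\alpha}$ paired against $r^{-\kappa_1-1}$-weights on an arc of length $\sim\rho$), the $c_2^{\rm s}\bPh_2^{\rm s}$-contribution vanishes by scaling when $\kappa_1<\kappa_2$ and by the biorthogonality of the angular eigenfunctions of the pencil at $S_1$ when $\kappa_1=\kappa_2$, and the $c_1^{\rm s}\bPh_1^{\rm s}$-contribution, in which the $r$-powers cancel, converges to $c_1^{\rm s}$ times the angular integral that is exactly $\g_1^{\rm s}$ in \eqref{Stokes-C} — the terms $-\xi_1^{\rm s}[\widetilde\bT_1^{\rm s}\cdot\eb_r]$ and $[\bT_1^{\rm s}\cdot\eb_r]\widetilde\xi_1^{\rm s}$ there being precisely the contributions of the pressure singular parts $\phi_1^{\rm s}\sim r^{\kappa_1-1}$ and $\widetilde\phi_1^{\rm s}\sim r^{-\kappa_1-1}$, which sit at the same order as the viscous term and cannot be dropped. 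Balancing everything gives $c_1^{\rm s}\g_1^{\rm s}=\mathcal{C}_1^{\rm s}(\f,\zeta,{\bf g})$. For $c_2^{\rm s}$, rather than testing $[\u^{\rm s},p^{\rm s}]$ directly (which would force a delicate appeal to biorthogonality), I would subtract the now-known leading singularity and apply the identical computation to $[\u^{\rm s}-c_1^{\rm s}\bPh_1^{\rm s},\,p^{\rm s}-c_1^{\rm s}\phi_1^{\rm s}]=[\w^{\rm s}+c_2^{\rm s}\bPh_2^{\rm s},\,\sigma^{\rm s}+c_2^{\rm s}\phi_2^{\rm s}]$, which solves \eqref{Stokes} with the same $\f,\zeta$ but boundary datum ${\bf g}-c_1^{\rm s}\bPh_1^{\rm s}$, the modification being supported on $\Gamma_2,\dots,\Gamma_{J-1}$ (where $\bPh_1^{\rm s}\neq{\bf 0}$), tested against $[\mu\widetilde\bPh_2^{\rm s}+\bPs_2^{\rm s},\,\mu\widetilde\phi_2^{\rm s}+\psi_2^{\rm s}]$. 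The arc now produces $c_2^{\rm s}\g_2^{\rm s}$ unambiguously, while the boundary modification contributes exactly $c_1^{\rm s}\mathcal{C}_*^{\rm s}$, and one reads off $c_2^{\rm s}\g_2^{\rm s}=\mathcal{C}_2^{\rm s}(\f,\zeta,{\bf g})+c_1^{\rm s}\mathcal{C}_*^{\rm s}$.

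It remains to confirm that \eqref{Stokes-sif-formula-sec1} is meaningful, i.e.\ $\g_i^{\rm s}\neq0$; this is the non-degeneracy of the natural bilinear pairing between the $\kappa_i$- and $(-\kappa_i)$-generalized eigenspaces of the Stokes pencil at $S_1$, which are in duality in the standard construction of dual singular functions, and which can also be checked by direct computation from the explicit angular functions; one should also record the slightly stronger regularity of $[\bPs_i^{\rm s},\psi_i^{\rm s}]$ that is needed in Section~\ref{sec4} when these objects are matched with their $\eps$-counterparts from Theorem~\ref{thm1.0}. The hard part will be the limiting argument on the shrinking arc $\partial B_\rho(S_1)\cap\O$: it requires the decomposition \eqref{Stokes-decomposition-sec1} with enough quantitative control of $\w^{\rm s}$ near $S_1$ to annihilate its arc integrals against the $r^{-\kappa_i-1}$ weights, it requires verifying the absolute convergence of every volume and boundary integral as $\rho\to0^+$ (which is exactly where $\zeta(S_1)=0$ and ${\bf g}_1(S_1)={\bf g}_J(S_1)={\bf 0}$ are consumed), and — the most error-prone step — it requires bookkeeping the pressure singular parts, one order more singular than the velocities, so that the pairings of $\phi_i^{\rm s}$ against $\widetilde\bPh_i^{\rm s}$ and of $\bPh_i^{\rm s}$ against $\widetilde\phi_i^{\rm s}$ are retained at the right order and deliver, with the correct signs and on the correct side, the two extra angular terms in $\g_i^{\rm s}$.
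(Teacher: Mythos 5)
Your proposal reproduces the paper's architecture, which splits the statement across Theorems~\ref{thm4.1} and~\ref{thm4.2}: lift the boundary data, invoke the corner-singularity decomposition from \cite{CKT} and absorb the cutoff into the regular part, construct $[\bPs^{\rm s}_i,\psi^{\rm s}_i]$ so that $\v^{\rm s}_i:=\mu{\widetilde\bPh}^{\rm s}_i+\bPs^{\rm s}_i$ vanishes on all of $\partial\O$, and extract $c^{\rm s}_1$ and then $c^{\rm s}_2$ (after subtracting the known $c^{\rm s}_1$-singularity from the boundary datum) by passing a Green identity on $\O\setminus\overline{B_\rho(S_1)}$ to the limit $\rho\to 0^+$. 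One wording slip worth fixing: it is not the boundary terms carrying the trace of $\u^{\rm s}$ that drop, but those carrying $\partial\u^{\rm s}/\partial\nb$ and $p^{\rm s}$, because they are multiplied by $\v^{\rm s}_i|_{\partial\O}={\bf 0}$; the trace terms $\u^{\rm s}={\bf g}$ paired against $\partial\v^{\rm s}_i/\partial\nb_j$ and $q^{\rm s}_i$ survive and become precisely the ${\bf g}$-boundary integrals in $\mathcal{C}^{\rm s}_i$, which is in fact consistent with the next clause of your own paragraph.
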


\vspace{0.1cm}

\begin{rem}\label{rk1.4}
Having compared with Theorem \ref{thm1.0} for the penalized Lam\'{e} system, it makes sense that the two functions $\bPh_i^{\rm s}$ and ${\widetilde\bPh}_i^{\rm s}$ in Theorem \ref{thm1.3} do not contain a cutoff function.
Due to avoiding cutoff function, we remark that these constructions of singular functions are different from the ones used in \cite{CKT}.
Such constructions to avoid cutoff function are used in the previous papers \cite{GL,GS,NA}, and a similar idea can be also found in the numerical scheme: {\it singular complement method} (see \cite{CHT,CJKLZ1,CJKLZ2}).
\end{rem}

\vspace{0.1cm}

Finally, we present the following main result: the convergence of $\w^\eps$ and $c_i^\eps$ in \eqref{decomposition_eps} as $\eps\rightarrow 0^+$ to be verified in Section \ref{sec5}.

\begin{thm}\label{thm1.1}
Given $\eps>0$, let $c_1^\eps$, $c_2^\eps$ be the real-valued coefficients of the singular part in the corner singularity expansion \eqref{decomposition_eps} with the formulae defined by \eqref{coeff_eps_formula}.
Also, let $\w^\eps\in{\bf H}^2(\O)$ be the smoother remainder in \eqref{decomposition_eps} and $\sigma^\eps:=-(1/\eps)\,\div\,\w^\eps\in{\rm H}^1(\O)$. Then, for $i=1,2$,
\begin{align}
(i)&~c_i^\eps\rightarrow \mu^{-1} c_i^{\rm s}&\mbox{ as }\,\eps\rightarrow 0^+,\label{sif_eps_convergence}\\
(ii)&~\mu\|\w^\eps-\w^{\rm s}\|_{{\bf H}^1(\O)}+\|\sigma^\eps-\sigma^{\rm s}\|_{{\rm L}^2(\O)}\rightarrow 0&\mbox{ as }\,\eps\rightarrow 0^+,\label{regular_eps_convergence}
\end{align}
where the numbers $c_1^{\rm s},c_2^{\rm s}\in\mathbb{R}$, formulated by \eqref{Stokes-sif-formula-sec1}, are the coefficients of the singular part to the Stokes problem of incompressible flow, and the pair $[\w^{\rm s},\sigma^{\rm s}]\in{\bf H}^2(\O)\times{\rm H}^1(\O)$, given in \eqref{Stokes-decomposition-sec1}, denotes the regular part to the Stokes problem.
\end{thm}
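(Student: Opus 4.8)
The plan is to prove Theorem~\ref{thm1.1} by exploiting the explicit formulae \eqref{coeff_eps_formula} and \eqref{Stokes-sif-formula-sec1} together with the penalty-to-Stokes convergence for the auxiliary problems. Throughout, I would keep in mind Remark~\ref{rk1.2}: the key structural facts are that the singular eigenvalues and eigenfunctions converge, $\l_i^\eps\to\k_i$ and $\bT_i^\eps\to\bT_i^{\rm s}$ (with the $\eps$-dependent Lam\'e eigenvalue problem degenerating to the Stokes eigenvalue problem as $\eps\to0^+$), and that the seemingly singular $\eps^{-1}$ terms are in fact $O(1)$ because of the cancellations $I_i^\eps+J_i^\eps=\eps\,\mathcal{K}_i^\eps$, $\div\,\widetilde\bPh_i^\eps=\eps\,r^{-\l_i^\eps-1}\widetilde{\mathcal{K}}_i^\eps$, and $\|\div\,\bPs_i^\eps\|_{L^2(\Gamma_j)}\le C\eps$.

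First I would establish the convergence of the building blocks. Step~1: show the spectral convergence of the transcendental equation defining $\l_i^\eps$, using that the Lam\'e parameter ratio $\eps^{-1}$ pushes the system toward the incompressibility constraint; conclude $\l_i^\eps\to\k_i$ and, by continuous dependence, $\bT_i^\eps\to\bT_i^{\rm s}$, $\pd_\th\bT_i^\eps\to\pd_\th\bT_i^{\rm s}$ uniformly on $[\o_1,\o_2]$, and similarly for the tilde objects. Step~2: show that the ``pressure-like'' angular quantity $\eps^{-1}\mathcal{K}_i^\eps(\th)$ and $\eps^{-1}\widetilde{\mathcal{K}}_i^\eps(\th)$ converge to the corresponding Stokes angular pressure profiles $\xi_i^{\rm s}(\th)$, $\widetilde\xi_i^{\rm s}(\th)$ (up to the factor $\mu$ appearing in \eqref{Stokes-C}); this is the precise mechanism matching $\g_i^\eps\to\g_i^{\rm s}$ term by term. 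Step~3: prove that the auxiliary fields $\bPs_i^\eps$ converge: since $\widetilde\bPh_i^\eps\to\widetilde\bPh_i^{\rm s}$ on each $\Gamma_j$ and $\div\,\widetilde\bPh_i^\eps=O(\eps)$, the pair $[\bPs_i^\eps,\,-\eps^{-1}\div\,\bPs_i^\eps]$ solves a penalized Stokes problem with data converging to that of \eqref{Stokes-ps}; invoking Theorem~5.3 of \cite[Ch.~I]{GRF} (or its local/inhomogeneous variant) gives $\bPs_i^\eps\to\mu\,\bPs_i^{\rm s}$ in $\H^1(\O)$ and $-\eps^{-1}\div\,\bPs_i^\eps\to\mu\,\psi_i^{\rm s}$ in $L^2(\O)$ — note the $\mu$ arising from the normalization difference between \eqref{Psi-eps-prob} and \eqref{Stokes-ps}. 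Combining Steps~1--3 and passing to the limit in \eqref{gamma_eps}, using \eqref{remark-eq1}--\eqref{remark-eq2} to control the $\eps^{-1}$ factors, yields $\g_i^\eps\to\g_i^{\rm s}$, $\mathcal{C}_i^\eps(\f,\g)\to\mu^{-1}\mathcal{C}_i^{\rm s}(\f,0,\g)$, and $\mathcal{C}_*^\eps\to\mu^{-1}\mathcal{C}_*^{\rm s}$ (with $\zeta=0$ appropriate here since the reduced problem has $\zeta^\eps=-\eps p^\eps\to0$); assembling these in \eqref{coeff_eps_formula} versus \eqref{Stokes-sif-formula-sec1} gives part~(i), i.e.\ $c_i^\eps\to\mu^{-1}c_i^{\rm s}$, provided $\g_i^{\rm s}\neq0$ (which is exactly the well-posedness statement behind Theorem~\ref{thm1.3}).

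For part~(ii), I would argue by subtraction. Set $[\w^\eps,\sigma^\eps]$ as in the statement and $[\U^\eps,P^\eps]:=[\w^\eps,\sigma^\eps]-\sum_i c_i^\eps[\bPh_i^\eps,\phi_i^\eps]+\sum_i c_i^\eps[\bPh_i^\eps,\phi_i^\eps]$ — more precisely, compare the full penalized solution $[\u^\eps,p^\eps]$ with the full Stokes solution $[\u^{\rm s},p^{\rm s}]$: by the same $\GRF$-type estimate, $\mu\|\u^\eps-\u^{\rm s}\|_{\H^1}+\|p^\eps-p^{\rm s}\|_{L^2}\to0$ (here again the factor $\mu$ comes from \eqref{Lame_eps} being $\mu$ times the normalized penalized Stokes system). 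Now use the two decompositions \eqref{decomposition_eps} and \eqref{Stokes-decomposition-sec1}: write
$\w^\eps-\w^{\rm s} = (\u^\eps-\u^{\rm s}) - \sum_i c_i^\eps\bPh_i^\eps + \sum_i c_i^{\rm s}\bPh_i^{\rm s}$,
and the analogous identity with $\sigma^\eps-\sigma^{\rm s}$ replaced. The first term tends to $0$ by the above; for the remaining terms, add and subtract $\sum_i \mu^{-1}c_i^{\rm s}\bPh_i^\eps$ and use part~(i) to handle the coefficient differences, while the difference $\bPh_i^\eps-\mu\,\bPh_i^{\rm s}$ (and the pressure profile difference $\eps^{-1}\times(\text{angular part})-\mu\,\phi_i^{\rm s}$) must be shown to converge to $0$ in $\H^1(\O)\times L^2(\O)$ — this reduces to the uniform angular convergence of Step~1 together with the $\eps^{-1}$-cancellation of Step~2, since these singular functions are of the explicit form $r^{\l_i^\eps}\bT_i^\eps(\th)$ without cutoff, so their $\H^1$ norms on the bounded domain are controlled by the (convergent) angular data and the fact that $\l_i^\eps,\k_i\in(1/2,1)$ keeps $r^{\l_i^\eps}\in\H^1$ uniformly.

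The main obstacle I anticipate is Step~2 combined with the pressure part of~(ii): rigorously identifying the limit of the $O(1)$ quantity $\eps^{-1}\div\,\bPh_i^\eps$ (equivalently $r^{-\l_i^\eps-1}\widetilde{\mathcal{K}}_i^\eps(\th)$ after the cancellation) with the Stokes angular pressure profile $\mu\,\phi_i^{\rm s}$, uniformly enough to pass to the limit both in the boundary integrals of \eqref{gamma_eps} and in the $L^2(\O)$-norm of~(ii). This requires a careful expansion of the Lam\'e eigenfunction $\bT_i^\eps$ in powers of $\eps$: writing $\bT_i^\eps=\bT_i^{\rm s}+\eps\,\bT_i^{(1)}+o(\eps)$ and tracking that the $\eps^{-1}$-weighted divergence picks out precisely $\bT_i^{(1)}$, which must satisfy exactly the angular ODE defining $\xi_i^{\rm s}$. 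I would carry this out by differentiating the transcendental/ODE system with respect to the parameter $\eps$ at $\eps=0^+$; the a~priori bounds claimed in Remark~\ref{rk1.2} guarantee the expansion does not degenerate. Once this asymptotic is in hand, everything else is a matter of assembling convergent pieces, and the proof closes.
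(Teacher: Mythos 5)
Your overall strategy mirrors the paper's: establish convergence of the building blocks ($\l_i^\eps\to\kappa_i$, $\bT_i^\eps\to\bT_i^{\rm s}$, the auxiliary fields $\bPs_i^\eps$, and the $\eps^{-1}$-weighted quantities) and then pass to the limit in the explicit formulae \eqref{coeff_eps_formula}, followed by a subtraction argument for part (ii). However, your $\m$-bookkeeping is systematically off and would actually derail the argument for $c_2^\eps$. You claim $\g_i^\eps\to\g_i^{\rm s}$, $\C_i^\eps\to\m^{-1}\C_i^{\rm s}$, $\C_*^\eps\to\m^{-1}\C_*^{\rm s}$, and $\bPs_i^\eps\to\m\bPs_i^{\rm s}$; all four are incorrect. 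Since ${\widetilde\bPh}_i^\eps=r^{-\l_i^\eps}{\widetilde\bT}_i^\eps$ while ${\widetilde\bPh}_i^{\rm s}=\m^{-1}r^{-\kappa_i}{\widetilde\bT}_i^{\rm s}$, the boundary datum $-{\widetilde\bPh}_i^\eps$ of \eqref{Psi-eps-prob} converges precisely to $-\m{\widetilde\bPh}_i^{\rm s}$, the datum of \eqref{Stokes-ps}; hence $\bPs_i^\eps\to\bPs_i^{\rm s}$ with no extra $\m$, which forces $\C_i^\eps\to\C_i^{\rm s}$, $\g_i^\eps\to\m\g_i^{\rm s}$, and $\C_*^\eps\to\m\C_*^{\rm s}$. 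Your version happens to give $c_1^\eps\to\m^{-1}c_1^{\rm s}$, but for $c_2^\eps=(\C_2^\eps+c_1^\eps\C_*^\eps)/\g_2^\eps$ it would produce $\m^{-1}\C_2^{\rm s}/\g_2^{\rm s}+\m^{-2}c_1^{\rm s}\C_*^{\rm s}/\g_2^{\rm s}$, which is not $\m^{-1}c_2^{\rm s}$ — the mismatch is invisible at the level of $c_1^\eps$ but fatal at $c_2^\eps$.

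The second gap is in your Step 3: you invoke GRF Theorem 5.3 for $\bPs_i^\eps$ ``with data converging to that of \eqref{Stokes-ps},'' but that theorem compares a penalized problem with the Stokes problem having the \emph{same} data. The paper closes this by a triangle argument: it introduces an intermediate penalized Stokes problem ${\widetilde\bPs}_i^\eps$ with the $\eps$-independent boundary datum $-\m{\widetilde\bPh}_i^{\rm s}$, applies GRF to get ${\widetilde\bPs}_i^\eps\to\bPs_i^{\rm s}$, and then uses the $\eps$-uniform stability of Theorem~\ref{thm2.2} on the difference ${\boldsymbol\E}_2:=\bPs_i^\eps-{\widetilde\bPs}_i^\eps$ (which solves the penalized Lam\'e system with boundary datum $-{\widetilde\bPh}_i^\eps+\m{\widetilde\bPh}_i^{\rm s}\to 0$) to conclude $\bPs_i^\eps-{\widetilde\bPs}_i^\eps\to 0$. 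You also propose a perturbation expansion $\bT_i^\eps=\bT_i^{\rm s}+\eps\bT_i^{(1)}+o(\eps)$ to identify the pressure-like limit; the paper avoids this entirely by factoring the $\eps$ out of $I_i^\eps+J_i^\eps$ and $\div{\widetilde\bPh}_i^\eps$ explicitly (\eqref{gamma1-2}, \eqref{gamma2-2}, \eqref{eps_div_dual}, with $C_\m^\eps=1+2\m\eps$) and then letting $C_\m^\eps\to 1$, $\l_i^\eps\to\kappa_i$ in the explicit formulae — a more elementary route that sidesteps the need to justify the first-order term $\bT_i^{(1)}$.
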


\vspace{0.1cm}

Throughout this paper, we will use the following standard notations (cf. \cite{AS,GRF,GE}).
For $s\geq 0$ and $p\geq 1$, let ${\rm W}^{s,p}(\mathcal{O})$ be a fractional Sobolev space on an open set $\mathcal{O}$ with the norm $\|\cdot\|_{s,p,\mathcal{O}}$.
In the case of $p=2$, we use a simple notation ${\rm H}^s(\mathcal{O})={\rm W}^{s,2}(\mathcal{O})$ with the norm $\|\cdot\|_{s,\mathcal{O}}$.
Denote by $C_0^\infty(\Omega)$ the linear space of infinitely differentiable functions with compact support in $\Omega$. We define
$$
{\rm H}_0^s(\Omega)=\overline{C_0^\infty(\Omega)}^{{\rm H}^s(\Omega)},
$$
i.e., ${\rm H}_0^s(\Omega)$ is the closure of $C_0^\infty(\Omega)$ in the space ${\rm H}^s(\Omega)$.
Let ${\rm H}^{-s}(\Omega)$ be the dual space of ${\rm H}_0^s(\Omega)$ normed by
$$
\|f\|_{-s,\Omega}=\sup_{0\neq v\in {\rm H}_0^s(\Omega)}\frac{\langle f,v\rangle}{\|v\|_{s,\Omega}},
$$
where $\langle~,~\rangle$ means the duality pairing between ${\rm H}^{-s}(\Omega)$ and ${\rm H}^s_0(\Omega)$.
Let ${\rm L}_0^2(\Omega)=\{v\in {\rm L}^2(\Omega):\int_\Omega v \,d{\bf x}=0\}$ and $\bar{{\rm H}}^s(\Omega)={\rm H}^s(\Omega)\cap {\rm L}_0^2(\Omega)$.
Furthermore, denote by $\H^s(\O)={\rm H}^s(\O)\times{\rm H}^s(\O)$ and similar for other spaces.
We shall also use a generic constant $C>0$ which is only depending on $\Omega$.

\vspace{0.1cm}

The rest of the paper is organized as follows.
In Section \ref{sec2}, we give a basic regularity result for the solution to the Lam\'{e} system with the inhomogeneous Dirichlet boundary condition. Section \ref{sec3} formulates explicitly coefficients in singular parts to the Lam\'{e} system and demonstrates their well-posedness. Section \ref{sec4} is devoted to explicit formulation and well-posedness of coefficients in singular parts to the Stokes problem with non-divergence-free property and inhomogeneous boundary condition. Finally, the main result of the paper, Theorem \ref{thm1.1}, is proved in  Section \ref{sec5}.

\vspace{0.7cm}

\section{Preliminaries}\label{sec2}

\vspace{0.2cm}

In this section, we discuss a basic regularity result on the Lam\'{e} system regarding the isotropic elasticity model. The concerned problem with inhomogeneous Dirichlet boundary condition is
\begin{equation}\label{Lame}
\left\{\begin{aligned}
-\mu\Delta\u-(\mu+\n)\nabla\div\,\u&=\f&&\mbox{ in }\Omega,\\
\u&={\bf g}&&\mbox{ on }\Gamma,
\end{aligned}\right.
\end{equation}
where $\u$ is the displacement, $\f$ is the body force in the elastic material occupying the bounded polygonal region $\O\subset\mathbb{R}^2$, and $\mu,\n>0$ are the Lam\'{e} constants with $\mu_1\leq\mu\leq\mu_2$ for some constants $\mu_2\ge\mu_1>0$.

\vspace{0.1cm}

First, we give a property of the divergence operator in the following lemma.

\begin{lem}\label{lem2.1}
Assume that $\v\in{\bf H}^s(\Omega)\cap{\bf H}_0^1(\Omega)$ for some $1\leq s\leq 2$.
Then, there exists $\w\in{\bf H}^s(\Omega)\cap{\bf H}_0^1(\Omega)$ such that
$$
\div \,\w=\div\, \v\quad\mbox{in $\Omega$}
$$
and
$$
\|\w\|_{s,\Omega}\leq C\|\div\,\v\|_{s-1,\Omega},
$$
where $C=C(\Omega)$ is a positive constant.
\end{lem}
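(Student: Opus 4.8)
The statement to prove is Lemma~\ref{lem2.1}: given $\v\in\mathbf{H}^s(\Omega)\cap\mathbf{H}^1_0(\Omega)$ with $1\le s\le 2$, find $\w$ in the same space with $\div\,\w=\div\,\v$ and $\|\w\|_{s,\Omega}\le C\|\div\,\v\|_{s-1,\Omega}$.

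\medskip

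\textbf{Proof plan.} The natural approach is to recognize $f:=\div\,\v$ as a prescribed right-hand side and \emph{solve the divergence equation} $\div\,\w=f$ with homogeneous Dirichlet data, invoking the classical Bogovski\u{\i}-type theory. The plan is as follows. First I would verify the compatibility condition: since $\v\in\mathbf{H}^1_0(\Omega)$, integration by parts gives $\int_\Omega \div\,\v\,d\x=\int_\Gamma \v\cdot\nb\,dS=0$, so $f=\div\,\v\in \mathrm{L}^2_0(\Omega)$, and moreover $f\in \mathrm{H}^{s-1}(\Omega)$ with $0\le s-1\le 1$. Next, I would recall that for a bounded Lipschitz (indeed polygonal) domain $\Omega$, the divergence operator $\div:\mathbf{H}^1_0(\Omega)\to \mathrm{L}^2_0(\Omega)$ admits a bounded linear right inverse (the Bogovski\u{\i} operator $\mathcal{B}$), and that this operator is additionally bounded from $\mathrm{H}^{k}(\Omega)\cap \mathrm{L}^2_0(\Omega)$ into $\mathbf{H}^{k+1}(\Omega)\cap\mathbf{H}^1_0(\Omega)$ for $k=0,1$ (the $k=1$ case requires the domain to be, say, a polygon or Lipschitz with the relevant elliptic regularity; for polygons this is standard, cf. the references \cite{GRF,GE}). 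Then $\w:=\mathcal{B}f$ satisfies $\div\,\w=f=\div\,\v$ in $\Omega$, $\w\in\mathbf{H}^1_0(\Omega)$, and the two endpoint estimates
\[
\|\mathcal{B}f\|_{1,\Omega}\le C\|f\|_{0,\Omega},\qquad \|\mathcal{B}f\|_{2,\Omega}\le C\|f\|_{1,\Omega}.
\]
Finally, for the intermediate range $1<s<2$ I would obtain $\|\w\|_{s,\Omega}\le C\|f\|_{s-1,\Omega}$ by \emph{interpolation}: the operator $\mathcal{B}$ is bounded $\mathrm{L}^2_0\to\mathbf{H}^1_0$ and $\bar{\mathrm{H}}^1\to\mathbf{H}^2\cap\mathbf{H}^1_0$, so by real (or complex) interpolation of Sobolev scales it is bounded from the interpolation space $[\mathrm{L}^2_0(\Omega),\bar{\mathrm{H}}^1(\Omega)]_{s-1}=\mathrm{H}^{s-1}(\Omega)\cap \mathrm{L}^2_0(\Omega)$ into $[\mathbf{H}^1_0(\Omega),\mathbf{H}^2(\Omega)\cap\mathbf{H}^1_0(\Omega)]_{s-1}=\mathbf{H}^{s}(\Omega)\cap\mathbf{H}^1_0(\Omega)$, which is exactly the claimed bound with $C=C(\Omega)$.

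\medskip

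\textbf{Main obstacle.} The delicate point is the $\mathbf{H}^2$-regularity endpoint, i.e.\ the assertion that the Bogovski\u{\i} right inverse gains two derivatives when $f\in\mathrm{H}^1$. On a smooth domain this is classical, but here $\Omega$ is a non-convex polygon with a re-entrant corner, so one must be careful that the $\mathrm{H}^1\to\mathbf{H}^3$ (or even $\mathrm{H}^0\to\mathbf{H}^2$) mapping property still holds; in fact the statement only asks for $\mathbf{H}^2$ from $\mathrm{H}^1$ data, and this survives on polygons because the Bogovski\u{\i} operator can be realized via an explicit integral kernel that does not see the corner obstruction the way the Dirichlet Laplacian does. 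Alternatively, one can circumvent corner issues entirely by first extending $f$ by zero (it has mean zero, but one must correct) to a large ball or smooth domain $\widetilde{\Omega}\supset\Omega$, solving there with full elliptic regularity, and restricting; since we only need the estimate on $\Omega$ and membership in $\mathbf{H}^1_0(\Omega)$, a localization/extension argument handles the corner. A secondary but routine technical point is correctly identifying the interpolation spaces: $[\mathrm{L}^2_0,\bar{\mathrm{H}}^1]_\theta=\mathrm{H}^{\theta}\cap\mathrm{L}^2_0$ and the analogous vector identity with the $\mathbf{H}^1_0$ constraint, both of which are standard but should be cited (cf. \cite{GE}). I expect the domain-regularity issue in the endpoint estimate to be where the real work lies; everything else is bookkeeping.
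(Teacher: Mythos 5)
The overall structure of your argument—establish the result at the integer endpoints $s=1,2$ and then interpolate—is exactly what the paper does. Where you diverge, and where the gap lies, is in the endpoint $s=2$.

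You invoke the Bogovski\u{\i} operator and assert that it is bounded from $\mathrm{H}^1(\Omega)\cap\mathrm{L}^2_0(\Omega)$ into $\mathbf{H}^2(\Omega)\cap\mathbf{H}^1_0(\Omega)$. This is not a standard property of $\mathcal{B}$, even on smooth domains: the classical higher-order mapping theorem (Bogovski\u{\i}; Galdi, Thm.~III.3.3) gives $\mathcal{B}:\mathrm{W}^{m,q}_0(\Omega)\cap\mathrm{L}^q_0(\Omega)\to\mathbf{W}^{m+1,q}_0(\Omega)$, i.e.\ it requires the data to have \emph{vanishing trace}, and this hypothesis cannot be dropped. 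Here $f=\div\v$ does not vanish on $\Gamma$ in general, so the hypothesis is not met. Worse, on the polygon at hand there is an outright obstruction of the Arnold--Scott--Vogelius type: if $\w\in\mathbf{H}^2(\Omega)\cap\mathbf{H}^1_0(\Omega)$, then at every vertex $S_j$ with interior angle $\neq\pi$ one has $\nabla\w(S_j)=0$ (the two edge tangents span $\mathbb{R}^2$ and the tangential derivatives vanish), hence $\div\w(S_j)=0$. So $\div:\mathbf{H}^2\cap\mathbf{H}^1_0\to\mathrm{H}^1\cap\mathrm{L}^2_0$ is \emph{not} surjective on a polygon, and no operator of Bogovski\u{\i} type (which is defined for general mean-zero data) can map $\mathrm{H}^1\cap\mathrm{L}^2_0$ into $\mathbf{H}^2\cap\mathbf{H}^1_0$. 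Your first proposed fix (``the kernel does not see the corner obstruction'') is therefore incorrect, and your second (extend $f$ by zero to a smooth domain and restrict) fails because $f$ does not have zero trace, so its zero extension is not even in $\mathrm{H}^1$ of the larger domain; replacing it by a Sobolev extension destroys the homogeneous Dirichlet condition on $\Gamma$ upon restriction, re-introducing the corner problem.

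The missing idea is that the hypothesis $\v\in\mathbf{H}^2(\Omega)\cap\mathbf{H}^1_0(\Omega)$ is essential and must be \emph{used}: it guarantees that $f=\div\v$ lies in the range of the divergence operator, i.e.\ that $f$ automatically satisfies all the vertex compatibility constraints (in particular $f(S_j)=0$ at true corners). This is precisely what \cite[Lemma 2.1]{BSL} (Brenner--Sung) exploits on polygons, and it is why the paper simply cites that lemma for $s=1,2$ rather than appealing to the Bogovski\u{\i} operator. Your interpolation step for $1<s<2$ is fine modulo the standard identification of the interpolation spaces; the gap is entirely in the $s=2$ endpoint.
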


\begin{proof}
The case of integer values $s=1,2$ is obviously covered by \cite[Lemma 2.1]{BSL}.
Furthermore, the result for any $s\in(1,2)$ can be obtained by the interpolation theory between the integer values $s=1,2$.
\end{proof}

\vspace{0.1cm}

Next, we give a priori estimate of the solution $\u$ to the problem \eqref{Lame} (cf. \cite{BSL,GRF,GS}).

\begin{thm}\label{thm2.2}
If $\f\in{\bf H}^{-1}(\Omega)$ and ${\bf g}_j\in{\bf H}^{1/2}(\Gamma_j)$ for $1\leq j\leq J$, then there exists a unique solution $\u\in{\bf H}^1(\Omega)$ to the boundary value problem \eqref{Lame}.
Furthermore, let $1<s<1+\min\{\lambda_1,\kappa_1\}$, where $\lambda_1,\kappa_1\in(1/2,1)$ are the leading singular exponents of the Lam\'{e} operator and  of the Stokes operator, defined in Sections \ref{sec3} and \ref{sec4}, respectively.
If $\f\in{\bf H}^{s-2}(\Omega)$ and ${\bf g}_j\in{\bf H}^{s-1/2}(\Gamma_j)$ for $1\leq j\leq J$ satisfy \eqref{g-condition}, then the solution $\u$ to \eqref{Lame} belongs to ${\bf H}^s(\Omega)$ and fulfills the following a priori estimate:
\begin{equation}\label{hs-estimate}
\mu\|\u\|_{s,\Omega}+\left(\mu+\n\right)\|\div\,\u\|_{s-1,\Omega}\leq C\bigg(\|\f\|_{s-2,\Omega}+\m\sum_{j=1}^J\|{\bf g}_j\|_{s-1/2,\Gamma_j}\bigg),
\end{equation}
where $C=C(\O)$ is a positive constant independent of the Lam\'{e} constants $\m$ and $\nu$.
\end{thm}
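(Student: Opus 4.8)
The plan is to establish the two assertions in turn: existence and uniqueness in $\H^1(\O)$ by a boundary lifting together with the Lax--Milgram lemma, and then the $\H^s$ regularity and the estimate \eqref{hs-estimate} by reducing to a homogeneous Dirichlet problem, combining interior and straight-boundary elliptic regularity with the corner decomposition of Grisvard near the re-entrant vertex $S_1$, and finally controlling the dependence of every constant on the Lam\'e constants $\m,\n$.

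For the $\H^1$ statement, each ${\bf g}_j\in{\bf H}^{1/2}(\G_j)$ so the datum ${\bf g}$ with ${\bf g}|_{\G_j}={\bf g}_j$ lies in ${\bf H}^{1/2}(\G)$ and admits some lifting ${\bf G}\in\H^1(\O)$; writing $\u={\bf v}+{\bf G}$, the field ${\bf v}$ solves the Lam\'e system with homogeneous Dirichlet data and right-hand side in $\H^{-1}(\O)$. On $\H_0^1(\O)$ the bilinear form $a({\bf v},{\bf w})=\m\int_\O\nab{\bf v}:\nab{\bf w}\,d\x+(\m+\n)\int_\O(\div\,{\bf v})(\div\,{\bf w})\,d\x$ is bounded and, discarding the nonnegative second term and using Poincar\'e's inequality, coercive on $\H_0^1(\O)$; Lax--Milgram gives a unique ${\bf v}$, hence a unique $\u\in\H^1(\O)$, for any ${\bf f}\in\H^{-1}(\O)$ and ${\bf g}_j\in{\bf H}^{1/2}(\G_j)$.

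For the $\H^s$ statement, $1<s<1+\min\{\l_1,\k_1\}$, I would first sharpen the lifting. The vertex-matching part of \eqref{g-condition} makes ${\bf g}\in{\bf H}^{s-1/2}(\G)$, which then has an extension $\widetilde{\bf G}\in\H^s(\O)$ with $\|\widetilde{\bf G}\|_{s,\O}\le C\sum_j\|{\bf g}_j\|_{s-1/2,\G_j}$; the flux part of \eqref{g-condition} and the divergence theorem give $\int_\O\div\,\widetilde{\bf G}\,d\x=0$, so Lemma \ref{lem2.1} yields ${\bf z}\in\H^s(\O)\cap\H_0^1(\O)$ with $\div\,{\bf z}=\div\,\widetilde{\bf G}$ and $\|{\bf z}\|_{s,\O}\le C\|\div\,\widetilde{\bf G}\|_{s-1,\O}$. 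Then ${\bf G}:=\widetilde{\bf G}-{\bf z}$ is a divergence-free lifting of ${\bf g}$ bounded by the data, and ${\bf v}:=\u-{\bf G}$ solves the homogeneous problem with right-hand side ${\bf F}={\bf f}+\m\Delta{\bf G}$ and, crucially, $\div\,{\bf v}=\div\,\u$. Away from $S_1$ the Lam\'e system is properly elliptic and satisfies the complementing condition, so classical $\Lm^2$ regularity up to the boundary (cf. \cite{GS,GRF}) puts ${\bf v}$ in $\H^2$ in the interior, near the straight sides and near the convex vertices. Near $S_1$ the Kondrat'ev--Grisvard decomposition for the Lam\'e operator splits ${\bf v}$, in a punctured neighbourhood of the corner, into an $\H^2$ remainder plus a finite sum $\sum_i c_i\,r^{\l_i}\bPh_i(\th)$ over the singular exponents $\l_i\in(1/2,1)$; since $r^{\l_i}\bPh_i(\th)$ belongs to $\H^s$ exactly for $s<1+\l_i$ and $s<1+\l_1$ by hypothesis, a partition of unity yields $\u\in\H^s(\O)$ together with an estimate of the shape \eqref{hs-estimate}.

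The main obstacle is making the constant in that estimate uniform in $\n$, which is indispensable for the application to \eqref{Lame_eps} where $\n=\eps^{-1}\to\infty$. Two points need attention. First, the leading Lam\'e exponent $\l_1(\m,\n)$ — the smallest root in $(1/2,1)$ of the transcendental equation attached to the Lam\'e operator at the angle $\o$ — depends continuously on the Poisson ratio and converges to the leading Stokes exponent $\k_1$ as $\n/\m\to\infty$; hence $\l_1(\m,\n)\ge\min\{\l_1,\k_1\}$ for all admissible $(\m,\n)$, and the hypothesis $s<1+\min\{\l_1,\k_1\}$ is exactly what makes the corner decomposition of the previous paragraph available uniformly. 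Second, to obtain the $\n$-uniform size of $C$ it is cleanest to pass to the mixed formulation: setting $p:=-(\m+\n)\div\,\u$ turns \eqref{Lame} into the penalized Stokes system $-\m\Delta\u+\nab p={\bf f}$, $\div\,\u=-(\m+\n)^{-1}p$, $\u={\bf g}$ in $\O$, that is, \eqref{penalty_stokes} with $\eps=(\m+\n)^{-1}$. The uniform stability of this penalized Stokes problem — at the $\H^1\times\Lm^2$ level this is Theorem 5.3 of \cite[Ch.~I]{GRF}, propagated to the $\H^s\times{\rm H}^{s-1}$ level through the Stokes corner expansion used in \cite{GS} — bounds $\m\|\u\|_{s,\O}+(\m+\n)\|\div\,\u\|_{s-1,\O}=\m\|\u\|_{s,\O}+\|p\|_{s-1,\O}$ by $\|{\bf f}\|_{s-2,\O}+\m\sum_j\|{\bf g}_j\|_{s-1/2,\G_j}$ with a constant depending only on $\O$; since $\m$ is confined to $[\m_1,\m_2]$, this is \eqref{hs-estimate}.
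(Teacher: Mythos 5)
Your overall plan mirrors the paper's: lift the boundary data, reduce to a homogeneous problem, pass to a Stokes formulation, and invoke corner regularity. But there are two concrete gaps, and the second one is at the heart of the theorem.

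First, a misuse of Lemma~\ref{lem2.1}. You apply it to $\widetilde{\bf G}$, a lifting of ${\bf g}$ into $\H^s(\O)$, to produce ${\bf z}\in\H^s\cap\H_0^1$ with $\div\,{\bf z}=\div\,\widetilde{\bf G}$. But Lemma~\ref{lem2.1} requires the input field to lie in $\H^s(\O)\cap\H_0^1(\O)$, and $\widetilde{\bf G}$ is precisely the field that does \emph{not} vanish on $\G$. What you actually want is a fractional Bogovskii-type inverse of the divergence acting on $L_0^2$ data; that exists, but it is not what the stated lemma provides. The paper sidesteps this entirely by taking the lifting in the form $\curl\,{\bf z}$ with ${\bf z}\in\H^{s+1}(\O)$ (via \cite[Theorem~6.2]{ASVR}), which is divergence-free by construction, so no inversion of $\div$ is needed for the lifting step.

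Second, and more seriously, the uniform-in-$\n$ constant. You set $p=-(\m+\n)\div\,\u$, observe that $[\u,p]$ solves a penalized Stokes system with $\eps=(\m+\n)^{-1}$, and then assert that the $\H^1\times\Lm^2$ stability of \cite[Ch.~I, Thm.~5.3]{GRF}, ``propagated to the $\H^s\times{\rm H}^{s-1}$ level through the Stokes corner expansion,'' delivers \eqref{hs-estimate}. That propagation is exactly what Theorem~\ref{thm2.2} is supposed to establish, so as written the argument is circular. The paper instead carries out the reformulation explicitly on the \emph{homogeneous} problem: it applies Lemma~\ref{lem2.1} to $\u_0\in\H^s\cap\H_0^1$ (a legitimate use), subtracts ${\boldsymbol\zeta}$ to get a genuinely divergence-free $\u_1$, sets $p_1=-\tfrac{\m+\n}{\m}\div\,\u_0$, applies the \emph{incompressible} Stokes $\H^s$ estimate of \cite[Thm.~2.1]{CKT}, and then obtains an inequality of the form $\m\|\u_0\|_s+(\m+\n)\|\div\,\u_0\|_{s-1}\le C_1(\|\f_0\|_{s-2}+\m\|\div\,\u_0\|_{s-1})$. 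The $\m\|\div\,\u_0\|_{s-1}$ on the right is then absorbed into the left when $\n\ge\m(2C_1-1)$, and when $\n$ is small the estimate comes from \cite[Lemma~2.1]{KF} directly. This absorption/casing step is what actually secures a constant independent of $\n$, and your sketch does not supply a substitute for it. Your remark that $\l_1(\m,\n)$ stays bounded below by $\min\{\l_1,\k_1\}$ as $\n/\m\to\infty$ is a true and useful observation (it justifies the range of $s$), but it does not by itself control the sizes of the coefficients in the corner decomposition uniformly in $\n$.
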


\begin{proof}
By the Lax-Milgram theorem in \cite{EP}, the existence and uniqueness of a solution $\u\in{\bf H}^1(\Omega)$ to the problem \eqref{Lame} are obviously guaranteed (cf. \cite{GRF}).
So it remains to show the estimate \eqref{hs-estimate}.

Consider the following homogeneous Dirichlet boundary value problem:
\begin{equation}\label{Lame-homogeneous}
\left\{\begin{aligned}
-\mu\Delta\u_0-(\mu+\n)\nabla\div\,\u_0&=\f_0&&\mbox{ in }\Omega,\\
\u_0&={\bf 0}&&\mbox{ on }\Gamma.
\end{aligned}\right.
\end{equation}
Let $1<s<1+\min\{\lambda_1,\kappa_1\}$. If $\f_0\in{\bf H}^{s-2}(\Omega)$, then, by \cite{GS}, the solution $\u_0$ to \eqref{Lame-homogeneous} belongs to ${\bf H}^s(\Omega)\cap{\bf H}_0^1(\Omega)$, and thus Lemma \ref{lem2.1} guarantees the existence of a function ${\boldsymbol\zeta}\in{\bf H}^s(\Omega)\cap{\bf H}_0^1(\Omega)$ such that
$$
\div\,{\boldsymbol\zeta}=\div\,\u_0\quad\mbox{ in }\Omega,
$$
and furthermore,
\begin{equation}\label{zeta-estimate}
\|{\boldsymbol\zeta}\|_{s,\Omega}\leq C\|\div\,\u_0\|_{s-1,\Omega}.
\end{equation}
Letting
\begin{equation}\label{u1-p}
\u_1:=\u_0-{\boldsymbol\zeta}\quad\mbox{ and }\quad p_1:=-\left(\frac{\mu+\n}{\mu}\right)\div\,\u_0,
\end{equation}
the problem \eqref{Lame-homogeneous} is equivalent to the following Stokes problem:
$$
\left\{\begin{aligned}
-\D\u_1+\nab p_1&=\mu^{-1}\f_0+\D{\boldsymbol\zeta}&&\mbox{ in }\Omega,\\
\div\,\u_1&=0&&\mbox{ in }\Omega,\\
\u_1&={\bf 0}&&\mbox{ on }\Gamma.
\end{aligned}\right.
$$
From \cite[Theorem 2.1]{CKT},
\begin{equation}\label{u1-p-estimate}
\|\u_1\|_{s,\Omega}+\|p_1\|_{s-1,\Omega}\leq C\mu^{-1}\|\f_0\|_{s-2,\Omega}+C\|\D{\boldsymbol\zeta}\|_{s-2,\Omega}.
\end{equation}
By \eqref{u1-p-estimate} and \eqref{zeta-estimate},
\begin{align}
\m\|\u_0\|_{s,\Omega}+(\m+\n)\|\div\,\u_0\|_{s-1,\Omega}
&\leq\m\left(\|\u_1\|_{s,\Omega}+\|{\boldsymbol\zeta}\|_{s,\Omega}+\|p_1\|_{s-1,\Omega}\right)\nonumber\\
&\leq C\left(\|\f_0\|_{s-2,\Omega}+\m\|{\boldsymbol\zeta}\|_{s,\Omega}\right)\nonumber\\
&\leq C_1\left(\|\f_0\|_{s-2,\Omega}+\m\|\div\,\u_0\|_{s-1,\Omega}\right),\label{u-divu-estimate}
\end{align}
where $C_1>0$ is a constant independent of $\m$ and $\n$.
If $\n\geq\m(2C_1-1)$, then the inequality \eqref{u-divu-estimate} becomes
\begin{equation}\label{u-divu-s-estimate}
\m\|\u_0\|_{s,\Omega}+\left(\frac{\m+\n}{2}\right)\|\div\,\u_0\|_{s-1,\Omega}\leq C_1\|\f_0\|_{s-2,\O}.
\end{equation}
On the other hand, if $\n<\m(2C_1-1)$, then the estimate \eqref{u-divu-s-estimate} also follows from the regularity result: $\|\u_0\|_{s,\O}\leq C\m\i\|\f_0\|_{s-2,\O}$ given by \cite[Lemma 2.1]{KF}.
Hence, one can show the desired estimate \eqref{hs-estimate} for any $\mu$ and $\nu$  when ${\bf g}_j\equiv{\bf 0}$ for $1\leq j\leq J$.

\vspace{0.1cm}

Next, we consider the inhomogeneous Dirichlet boundary value problem \eqref{Lame}.
Since ${\bf g}_j\in{\bf H}^{s-1/2}(\Gamma_j)$ for $1\leq j\leq J$ satisfy \eqref{g-condition}, by \cite[Theorem 6.2]{ASVR}, there exists a function ${\bf z}\in{\bf H}^{s+1}(\Omega)$ satisfying $\curl\,{\bf z}|_{\Gamma_j}={\bf g}_j$ for $1\leq j\leq J$ and
\begin{equation}\label{z-estimate}
\|{\bf z}\|_{s+1,\Omega}\leq C\sum_{j=1}^J\|{\bf g}_j\|_{s-1/2,\Gamma_j}.
\end{equation}
Letting $\u_2:=\u-\curl\,{\bf z}$, the concerned problem \eqref{Lame} is rewritten as
\begin{equation}\label{Lame-u2}
\left\{\begin{aligned}
-\mu\Delta\u_2-(\mu+\n)\nabla\div\,\u_2&=\f+\m\D\curl\,{\bf z}&&\mbox{ in }\Omega,\\
\u_2&={\bf 0}&&\mbox{ on }\Gamma.
\end{aligned}\right.
\end{equation}
From \eqref{u-divu-s-estimate}, the solution $\u_2$ to \eqref{Lame-u2} satisfies
\begin{equation}\label{u2-estimate}
\m\|\u_2\|_{s,\O}+\left(\m+\n\right)\|\div\,\u_2\|_{s-1,\O}\leq C_1\left(\|\f\|_{s-2,\O}+\m\|{\bf z}\|_{s+1,\O}\right).
\end{equation}
Since $\u=\u_2+\curl\,{\bf z}$ in $\O$, by \eqref{u2-estimate} and \eqref{z-estimate}, we obtain
$$
\begin{aligned}
\m\|\u\|_{s,\O}+\left(\m+\n\right)\|\div\,\u\|_{s-1,\O}
&\leq\m\left(\|\u_2\|_{s,\O}+\|{\bf z}\|_{s+1,\O}\right)+\left(\m+\n\right)\|\div\,\u_2\|_{s-1,\O}\\
&\leq C\left(\|\f\|_{s-2,\O}+\m\|{\bf z}\|_{s+1,\O}\right)\\
&\leq C\bigg(\|\f\|_{s-2,\O}+\m\sum_{j=1}^J\|{\bf g}_j\|_{s-1/2,\G_j}\bigg),
\end{aligned}
$$
yielding the desired estimate \eqref{hs-estimate}.
\end{proof}

\vspace{0.7cm}

\section{Formulation of coefficients in corner singularities of the Lam\'{e} system}\label{sec3}

\vspace{0.2cm}

In this section, we first discuss the corner singularities of the Lam\'{e} system \eqref{Lame} near the re-entrant corner and then describe the corner singularity expansion of its solution (see \cite{GL,GS,KMRS,NA}).
The corner singularity expansion means that the solution can be decomposed into the two parts: the regular part belonging to ${\bf H}^2(\O)$ and singular part outside ${\bf H}^2(\O)$.
Such corner singularity expansion is indeed useful in the development of highly accurate numerical scheme near the re-entrant corner, and in particular, the derivation of explicit formulae of coefficients in singular part can improve the convergence rate of discrete solution (cf. \cite{BDLNC,BM,CKA}).
For this reason, we try to provide the well-defined explicit formulae of coefficients in singular part.

\vspace{0.1cm}

We now introduce singular functions for the Lam\'{e} operator $\mathcal{L}:=-\mu\Delta-(\mu+\nu)\nab\div$ with the Dirichlet boundary condition.
Referring to \cite[Section 3.1]{KMRS}, the singular functions are obtained by finding non-trivial solutions of the homogeneous Lam\'{e} system on the sector $\S=\{(r,\th):r>0\mbox{ and }\omega_1<\theta<\omega_2\}$.
The description of singular function is as follows.

Let $\lambda_{i}$ be an eigenvalue of the Lam\'{e} operator, with the Dirichlet boundary condition, satisfying
\begin{equation}\label{eigen-eq}
\left(\frac{3\mu+\nu}{\mu+\nu}\right)^2\sin^2(\lambda_{i}\,\omega)-\lambda_{i}^2\sin^2\omega=0.
\end{equation}
Actually, the first three eigenvalues are real, which are ordered as
\begin{equation}\label{Eigenvalue}
1/2<\lambda_1<\pi/\omega<\lambda_2<1<\lambda_3<2\pi/\omega.
\end{equation}
If the singular part of solution is defined to be outside ${\bf H}^2(\O)$, then we only consider the first two eigenvalues $\l_1$ and $\l_2$ less than one.
Letting $N=2$ and corresponding to each eigenvalue $\l_{i}$ $(1\leq i\leq N)$, the singular function $\bPh_i$ is defined as
\begin{equation}\label{Singular}
\bPh_i(r,\th)=r^{\lambda_{i}}\bT_i(\th),
\end{equation}
where $\bT_i(\th)$ is the trigonometric eigenfunction given by
\begin{equation}\label{T-def}
\bT_i(\th)=A_i(\th-\ol\o)\,\eb_r(\th)+B_i(\th-\ol\o)\,\eb_\th(\th).
\end{equation}
Here, $\ol\o:=\left(\o_1+\o_2\right)/2$, $\eb_r(\th):=(\cos\th,\sin\th)^t$, $\eb_\th(\th):=(-\sin\th,\cos\th)^t$,
$$
\begin{aligned}
\begin{pmatrix}
A_1(\th)\\B_1(\th)
\end{pmatrix}
:=&\,-\begin{pmatrix}
(C_{\mu,\nu}-\lambda_{1})\sin[(1-\lambda_{1})\th]\\
(C_{\mu,\nu}+\lambda_{1})\cos[(1-\lambda_{1})\th]
\end{pmatrix}\\
&\,+(C_{\mu,\nu}\cos(\lambda_{1}\,\omega)+\lambda_{1}\cos\omega)\begin{pmatrix}
\sin[(1+\lambda_{1})\th]\\
\cos[(1+\lambda_{1})\th]
\end{pmatrix},\\
\begin{pmatrix}
A_2(\th) \\ B_2(\th)
\end{pmatrix}:=&\,-\begin{pmatrix}
(C_{\mu,\nu}-\lambda_{2})\cos[(1-\l_{2})\th]\\
-(C_{\mu,\nu}+\lambda_{2})\sin[(1-\l_{2})\th]
\end{pmatrix}\\
&+(C_{\mu,\nu}\cos(\lambda_{2}\,\omega)-\lambda_{2}\cos\omega)\begin{pmatrix}
\cos[(1+\l_{2})\th]\\
-\sin[(1+\l_{2})\th]
\end{pmatrix},
\end{aligned}
$$
and $C_{\mu,\nu}:=\left(3\mu+\nu\right)/\left(\mu+\nu\right)$.
By a direct calculation, it is noted that $\mathcal{L}\bPh_i\equiv 0$ in $\O$ and $\bPh_i\in\H^s\left(\Omega\right)$ for $s<1+\l_{i}$; that is,
$$
\|\bPh_i\|_{s,\Omega}\leq C,
$$
where $C=C(\O)$ is a positive constant independent of the Lam\'{e} constants $\m$ and $\n$.

\vspace{0.1cm}

Next, we describe the corner singularity expansion regarding the Lam\'{e} system \eqref{Lame} in the use of $\bPh_i$ (cf. \cite[Theorem 2.1]{KF}).

\begin{thm}\label{thm3.1}
If $\f\in{\bf L}^2(\Omega)$ and ${\bf g}_j\in{\bf H}^{3/2}(\G_j)$ for $1\leq j\leq J$ with \eqref{g-condition},
%
%\begin{equation}\label{g-condition2}
%\left\{\begin{aligned}
%&\,\sum_{j=1}^J\int_{\G_j}{\bf g}_j\cdot{\bf n}_j\,dS=0,\\
%&\,{\bf g}_j(S_{j+1})={\bf g}_{j+1}(S_{j+1}),\\
%&\,\frac{\partial\,{\bf g}_j}{\partial\,{\bf t}_j}\cdot{\bf n}_{j+1}=\frac{\partial\,{\bf g}_{j+1}}{\partial\,{\bf t}_{j+1}}\cdot{\bf n}_{j}\quad\mbox{ at }S_{j+1},
%\end{aligned}\right.
%\end{equation}
%
then the solution $\u\in{\bf H}^1(\Omega)$ to the problem \eqref{Lame} has a corner singularity expansion as follows: there exist a regular part $\w\in{\bf H}^2(\Omega)$ and coefficients $c_1,\,c_2\in\Rbb$ such that
\begin{equation}\label{decomposition}
 \u=\w+\sum_{i=1}^N c_i{\bold\Phi}_i.
\end{equation}
%where ${\bf t}_j$ the positively oriented unit tangent vector along $\Gamma_j$.
\end{thm}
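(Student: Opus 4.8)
The plan is to reduce the inhomogeneous problem to a homogeneous one and then invoke the known corner-singularity decomposition for the Lam\'e operator on a polygon. First I would lift the boundary data: since $\f\in{\bf L}^2(\Omega)\subset{\bf H}^{-1}(\Omega)$ and ${\bf g}_j\in{\bf H}^{3/2}(\Gamma_j)$ satisfy the compatibility conditions \eqref{g-condition}, Theorem \ref{thm2.2} gives a unique solution $\u\in{\bf H}^1(\Omega)$, and moreover, using the vector potential ${\bf z}\in{\bf H}^{5/2}(\Omega)$ with $\curl\,{\bf z}|_{\Gamma_j}={\bf g}_j$ supplied by \cite[Theorem 6.2]{ASVR} (exactly as in the proof of Theorem \ref{thm2.2}), I would set $\u_0:=\u-\curl\,{\bf z}$. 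Then $\u_0\in{\bf H}^1(\Omega)\cap{\bf H}^1_0(\Omega)$ solves the homogeneous Dirichlet problem \eqref{Lame-homogeneous} with right-hand side $\f_0:=\f+\mu\Delta\curl\,{\bf z}\in{\bf H}^{-1/2}(\Omega)\subset{\bf H}^{-1}(\Omega)$, hence in particular $\f_0\in{\bf H}^{s-2}(\Omega)$ for every $s<3/2$.

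Next I would apply the classical corner-singularity expansion of Grisvard \cite{GS} (see also \cite{GL,KMRS,NA}) to $\u_0$ on the polygon $\Omega$, whose only re-entrant vertex is $S_1$. Since the leading exponents at the convex vertices $S_2,\dots,S_J$ exceed $1$, no singular functions are needed there, and the solution is ${\bf H}^2$ in a neighborhood of each convex vertex. At $S_1$, the singular exponents are the eigenvalues $\lambda_i$ solving \eqref{eigen-eq}, and by the ordering \eqref{Eigenvalue} exactly $\lambda_1,\lambda_2\in(1/2,1)$ lie below $1$; these are the only exponents producing functions outside ${\bf H}^2$. Hence \cite{GS} yields $\w_0\in{\bf H}^2(\Omega)$ and coefficients $c_1,c_2\in\Rbb$ with
\begin{equation*}
\u_0=\w_0+\sum_{i=1}^{N}c_i\,\eta\,\bPh_i,
\end{equation*}
where $\eta$ is a smooth cutoff equal to $1$ near $S_1$; absorbing $(\eta-1)\bPh_i$ (which is ${\bf H}^2$ away from the origin and vanishes near $S_1$) and the error from $\mathcal{L}(\eta\bPh_i)-\eta\mathcal{L}\bPh_i$ (a commutator supported in an annulus, hence ${\bf L}^2$, contributing an ${\bf H}^2$ term) into the regular part, one gets the clean form with $\bPh_i=r^{\lambda_i}\bT_i(\theta)$ as in \eqref{Singular}. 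Finally, setting $\w:=\w_0+\curl\,{\bf z}$, which lies in ${\bf H}^2(\Omega)$ since ${\bf z}\in{\bf H}^{5/2}(\Omega)$, and undoing the lift gives the desired decomposition $\u=\w+\sum_{i=1}^{N}c_i\bPh_i$.

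The main obstacle is the bookkeeping in the reduction: one must check that the inhomogeneous data, once lifted, produce a homogeneous-Dirichlet problem whose right-hand side is regular enough ($\f_0\in{\bf H}^{s-2}$ for the relevant range $1<s<1+\lambda_1$) for Grisvard's expansion to apply, and that the cutoff-function manipulations and the potential lift ${\bf z}$ do not introduce any new singular behavior — i.e., that all correction terms genuinely land in ${\bf H}^2(\Omega)$. Since $\mathcal{L}\bPh_i\equiv 0$ and $\bPh_i\in{\bf H}^{s}$ for $s<1+\lambda_i$ (as recorded just before the theorem), these verifications are routine but must be stated carefully; the nontrivial input is entirely the two-dimensional Lam\'e regularity theory of \cite{GS}, which supplies the existence of the expansion with precisely two singular terms at the single re-entrant corner.
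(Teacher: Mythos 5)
Your approach is essentially the paper's: lift the boundary data with the Arnold--Scott--Vogelius vector potential, reduce to a homogeneous Dirichlet problem, invoke the known corner--singularity decomposition with a cutoff, and absorb $(\eta-1)\bPh_i$ (and the lift) into the regular part. However, there is a concrete gap in the regularity bookkeeping that breaks the argument as written.

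You take ${\bf z}\in{\bf H}^{5/2}(\Omega)$ with $\curl\,{\bf z}|_{\Gamma_j}={\bf g}_j$. That is off by half a derivative. Since ${\bf g}_j\in{\bf H}^{3/2}(\Gamma_j)$ and the trace loses half a derivative, the lift must satisfy $\curl\,{\bf z}\in{\bf H}^2(\Omega)$, hence ${\bf z}\in{\bf H}^3(\Omega)$; this is exactly what \cite[Theorem 6.2]{ASVR} gives (the paper uses $\|{\bf z}\|_{3,\Omega}\leq C\sum_j\|{\bf g}_j\|_{3/2,\Gamma_j}$). With your weaker ${\bf z}\in{\bf H}^{5/2}$ the argument collapses at two points. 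First, $\f_0=\f+\mu\Delta\curl\,{\bf z}$ is then only in ${\bf H}^{-1/2}(\Omega)$, not in ${\bf L}^2(\Omega)$; Grisvard's expansion (or \cite[Theorem 2.1]{KF}) with an ${\bf H}^2(\Omega)$ remainder requires the right-hand side in ${\bf L}^2(\Omega)$, so you cannot conclude $\w_0\in{\bf H}^2(\Omega)$ from ${\bf H}^{-1/2}$ data — you would only get a lower-order remainder. Second, even granting the expansion of $\u_0$, your final $\w=\w_0+\curl\,{\bf z}$ would only lie in ${\bf H}^{3/2}(\Omega)$, since $\curl\,{\bf z}\in{\bf H}^{3/2}(\Omega)$ when ${\bf z}\in{\bf H}^{5/2}(\Omega)$; so the claimed conclusion $\w\in{\bf H}^2(\Omega)$ does not follow. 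Taking ${\bf z}\in{\bf H}^3(\Omega)$ repairs both points: $\f_0\in{\bf L}^2(\Omega)$ and $\curl\,{\bf z}\in{\bf H}^2(\Omega)$, which is what the paper does. (Your aside about a commutator $\mathcal{L}(\eta\bPh_i)-\eta\mathcal{L}\bPh_i$ is unnecessary: one simply defines $\w$ to include $(\eta-1)\bPh_i$, which vanishes near the origin and is therefore ${\bf H}^2$; no commutator needs to be tracked at the level of the decomposition.)
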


\begin{proof}
As seen in \eqref{z-estimate}, \cite[Theorem 6.2]{ASVR} also gives the existence of a function ${\bf z}\in{\bf H}^3(\Omega)$ satisfying $\curl\,{\bf z}|_{\G_j}={\bf g}_j$ for $1\leq j\leq J$ and
\begin{equation}\label{z-3order-estimate}
\|{\bf z}\|_{3,\Omega}\leq C\sum_{j=1}^J\|{\bf g}_j\|_{3/2,\Gamma_j}.
\end{equation}
With $\u_3:=\u-\curl\,{\bf z}$, the problem \eqref{Lame} can be rewritten as
\begin{equation}\label{Lame-u3}
\left\{\begin{aligned}
\mathcal{L}\u_3&=\f+\m\D\curl\,{\bf z}&&\mbox{ in }\O,\\
\u_3&={\bf 0}&&\mbox{ on }\G,
\end{aligned}\right.
\end{equation}
which is the homogeneous Dirichlet boundary value problem.
Since $\f+\m\D\curl\,{\bf z}\in{\bf L}^2(\Omega)$, by \cite[Theorem 2.1]{KF}, the existence and uniqueness of a solution $\u_3\in{\bf H}_0^1(\Omega)$ to the problem \eqref{Lame-u3} are guaranteed, and furthermore, there exist a smoother function $\u_{\rm R}\in{\bf H}^2(\Omega)\cap{\bf H}_0^1(\Omega)$ and coefficients $c_1,c_2\in\mathbb{R}$ that compose $\u_3$ as
\begin{equation}\label{u3-decomposition}
\u_3=\u_{\rm R}+\sum_{i=1}^N c_i\ch\bPh_i,
\end{equation}
where $\ch=\ch(r)$ is a smooth cutoff function whose value is $1$ near the re-entrant corner $S_1$ and vanishes outside a small neighborhood of $S_1$.
Define
\begin{equation}\label{w-def}
\w:=\u_{\rm R}+\curl\,{\bf z}+\sum_{i=1}^N c_i(\ch-1)\bPh_i.
\end{equation}
Since $\u=\u_3+\curl\,{\bf z}$ in $\O$, by \eqref{u3-decomposition}, the desired expansion \eqref{decomposition} can be shown as follows:
$$
\u=\u_{\rm R}+\curl\,{\bf z}+\sum_{i=1}^N c_i\ch\bPh_i=\w+\sum_{i=1}^N c_i\bPh_i.
$$
Here, the function $(\ch-1)\bPh_i$ in \eqref{w-def} belongs to $\H^2(\O)$ because it vanishes near the re-entrant corner $S_1$.
Combining this property with $\u_{\rm R}+\curl\,{\bf z}\in{\bf H}^2(\Omega)$, one sees that $\w\in{\bf H}^2(\Omega)$, concluding the proof.
\end{proof}

\vspace{0.1cm}

Now, we introduce the explicit formulae of $c_i$ in \eqref{decomposition}.
To derive their formulae, we need to define the dual singular function ${\widetilde\bPh}_i$ as
\begin{equation}\label{Dual}
{\widetilde\bPh}_i(r,\th)=r^{-\lambda_i}{\widetilde\bT}_i(\th),
\end{equation}
where ${\widetilde\bT}_i(\th)$ is obtained by replacing $\lambda_i$ with $-\lambda_i$ in the eigenfunction $\bT_i(\th)$ of \eqref{T-def}.
Using ${\widetilde\bPh}_i$ given in \eqref{Dual}, we can formulate the coefficients $c_i$ in the following theorem.

\begin{thm}\label{thm3.2}
Suppose that $\f\in{\bf L}^2(\Omega)$ and that ${\bf g}_j\in{\bf H}^{3/2}(\G_j)$ for $1\leq j\leq J$ satisfy \eqref{g-condition} and ${\bf g}_1(S_1)={\bf g}_J(S_1)={\bf 0}$.
For $1\leq i\leq N$, let $\g_i\in\Rbb$ be the number given by
\begin{equation}\label{gamma}
\g_i:=\m\int_{\o_1}^{\o_2}\left(2\l_i\bT_i(\th)\cdot{\widetilde\bT}_i(\th)\right)d\th+\left(\mu+\nu\right)\int_{\o_1}^{\o_2}\left(I_{i}(\th)+J_{i}(\th)\right)d\th,
\end{equation}
where $I_{i}(\th)$ and $J_{i}(\th)$ are defined by
\begin{equation}\label{I12-def}
\begin{aligned}
I_{i}(\th)&:=2\l_i[\bT_i(\th)\cdot\eb_r(\th)][{\widetilde\bT}_i(\th)\cdot\eb_r(\th)],\\
J_{i}(\th)&:=[\pd_\th\bT_i(\th)\cdot\eb_\th(\th)][{\widetilde\bT}_i(\th)\cdot\eb_r(\th)]-[\bT_i(\th)\cdot\eb_r(\th)][\pd_\th{\widetilde\bT}_i(\th)\cdot\eb_\th(\th)].
\end{aligned}
\end{equation}
Then, the coefficients $c_1$ and $c_2$ in \eqref{decomposition} have the following formulae:
\begin{equation}\label{sif-formula}
 c_1=\frac{\mathcal{C}_1(\f,{\bf g})}{\g_1}\quad\mbox{and}\quad
 c_2=\frac{\mathcal{C}_2(\f,{\bf g})+c_1\mathcal{C}_*}{\g_2},
\end{equation}
where
$$
\begin{aligned}
 \mathcal{C}_i(\f,{\bf g}):=&\,\int_\O\f\cdot\left({\widetilde\bPh}_i+\bPs_i\right)d\x\\
 &\,-\sum_{j=1}^J\int_{\G_j}\left(\m\,{\bf g}_j\cdot\frac{\partial}{\partial{\bf n}_j}\left({\widetilde\bPh}_i+\bPs_i\right)+\left(\m+\n\right)\left({\bf g}_j\cdot{\bf n}_j\right)\div\left({\widetilde\bPh}_i+\bPs_i\right)\right)dS,\\
 \mathcal{C}_*:=&\,\sum_{j=2}^{J-1}\int_{\G_j}\left(\m\,\bPh_1\cdot\frac{\partial}{\partial\nb_j}\left({\widetilde\bPh}_2+\bPs_2\right)+\left(\m+\n\right)\left(\bPh_1\cdot\nb_j\right)\div\left({\widetilde\bPh}_2+\bPs_2\right)\right)dS,
\end{aligned}
$$
and $\bPs_i\in{\bf H}^1(\Omega)$ is the weak solution of
\begin{equation}\label{Psi-prob}
\left\{\begin{aligned}
 \mathcal{L}\bPs_i&={\bf 0}&&\mbox{ in }\Omega,\\
 \bPs_i&={\bf 0}&&\mbox{ on }\Gamma_j,\quad j=1,J,\\
 \bPs_i&=-\bPh_i^-&&\mbox{ on }\Gamma_j,\quad j=2,\cdots,J-1.
\end{aligned}\right.
\end{equation}
\end{thm}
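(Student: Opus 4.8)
The plan is a duality argument on the punctured domain $\Omega_\rho:=\Omega\setminus\overline{B_\rho(S_1)}$, letting $\rho\to0^+$. Let $\mathcal{B}{\bf v}:=\mu\,\partial_{\bf n}{\bf v}+(\mu+\nu)(\div{\bf v}){\bf n}$ be the conormal operator attached to $\mathcal{L}$, so that two integrations by parts give the Betti identity $\int_{\mathcal{O}}(\mathcal{L}{\bf v}\cdot{\bf z}-{\bf v}\cdot\mathcal{L}{\bf z})\,d\x=\int_{\partial\mathcal{O}}(\mathcal{B}{\bf z}\cdot{\bf v}-\mathcal{B}{\bf v}\cdot{\bf z})\,dS$. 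Set $\boldsymbol\Sigma_i:=\widetilde\bPh_i+\bPs_i$. By \eqref{Psi-prob} and the construction of the (dual) eigenfunctions, $\mathcal{L}\boldsymbol\Sigma_i=0$ in $\Omega\setminus\{S_1\}$ (classically for $\widetilde\bPh_i$ off $S_1$, weakly for $\bPs_i$), and $\boldsymbol\Sigma_i=0$ on the whole of $\Gamma$: on $\Gamma_1,\Gamma_J$ because $\widetilde\bPh_i$ and $\bPs_i$ both vanish there, and on $\Gamma_2,\dots,\Gamma_{J-1}$ because $\bPs_i=-\widetilde\bPh_i$ there. Since $\lambda_i\in(1/2,1)$, $\widetilde\bPh_i=r^{-\lambda_i}\widetilde\bT_i\in{\bf L}^2$ near $S_1$, so $\boldsymbol\Sigma_i\in{\bf L}^2(\Omega)$ and $\int_\Omega\f\cdot\boldsymbol\Sigma_i\,d\x$ is finite.

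The first step is the two master identities
\[
\mathcal{C}_1(\f,\gbf)=\lim_{\rho\to0^+}\int_{\gamma_\rho}\!\big(\mathcal{B}\boldsymbol\Sigma_1\cdot\u-\mathcal{B}\u\cdot\boldsymbol\Sigma_1\big)\,dS,\qquad \mathcal{C}_2(\f,\gbf)+c_1\mathcal{C}_*=\lim_{\rho\to0^+}\int_{\gamma_\rho}\!\big(\mathcal{B}\boldsymbol\Sigma_2\cdot(\u-c_1\bPh_1)-\mathcal{B}(\u-c_1\bPh_1)\cdot\boldsymbol\Sigma_2\big)\,dS,
\]
where $\gamma_\rho:=\Omega\cap\partial B_\rho(S_1)$ and $\Gamma_\rho:=\Gamma\setminus\overline{B_\rho(S_1)}$. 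For the first, apply the Betti identity on $\Omega_\rho$ to $(\u,\boldsymbol\Sigma_1)$; the interior term is $\int_{\Omega_\rho}\f\cdot\boldsymbol\Sigma_1\,d\x\to\int_\Omega\f\cdot\boldsymbol\Sigma_1\,d\x$, and on $\Gamma_\rho$ one uses $\boldsymbol\Sigma_1=0$ and $\u=\gbf$. The hypotheses $\gbf_1(S_1)=\gbf_J(S_1)={\bf 0}$ together with the Sobolev embedding ${\rm H}^{3/2}(\Gamma_j)\hookrightarrow C^{0,\alpha}(\overline{\Gamma_j})$, $\alpha<1$, give $|\gbf_j|\lesssim r^{\alpha}$ near $S_1$ with $\alpha>\lambda_1$, which dominates the $r^{-\lambda_1-1}$ growth of $\mathcal{B}\boldsymbol\Sigma_1$; hence $\int_{\Gamma_\rho}\mathcal{B}\boldsymbol\Sigma_1\cdot\gbf\,dS\to\sum_j\int_{\Gamma_j}\big(\mu\,\gbf_j\cdot\partial_{{\bf n}_j}\boldsymbol\Sigma_1+(\mu+\nu)(\gbf_j\cdot{\bf n}_j)\div\boldsymbol\Sigma_1\big)\,dS$, and subtracting this from $\int_\Omega\f\cdot\boldsymbol\Sigma_1\,d\x$ reproduces $\mathcal{C}_1(\f,\gbf)$. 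For $c_2$ one repeats the computation with $\u$ replaced by $\u-c_1\bPh_1$, which solves $\mathcal{L}(\u-c_1\bPh_1)=\f$ and equals $\gbf_j-c_1\bPh_1|_{\Gamma_j}$ on $\Gamma_j$; since $\bPh_1$ vanishes on $\Gamma_1,\Gamma_J$ but not on $\Gamma_2,\dots,\Gamma_{J-1}$, the extra boundary datum $-c_1\bPh_1|_{\Gamma_j}$ contributes exactly $-c_1\mathcal{C}_*$ through the $\Gamma_\rho$-integral, giving the second identity.

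The second step is to evaluate the arc limits. Abbreviate $Q_\rho({\bf V},{\bf W}):=\int_{\gamma_\rho}(\mathcal{B}{\bf V}\cdot{\bf W}-\mathcal{B}{\bf W}\cdot{\bf V})\,dS$ and insert the expansion \eqref{decomposition}, $\u=\w+c_1\bPh_1+c_2\bPh_2$. Three facts close the proof. (i) If two fields both solve $\mathcal{L}=0$ near $S_1$ and both vanish on the corner rays $\th=\omega_1,\omega_2$, then $Q_\rho$ of the pair is independent of small $\rho$ (apply the Betti identity on a thin annular sector; the radial sides contribute nothing). By homogeneity $Q_\rho(\bPh_k,\widetilde\bPh_m)=\rho^{\lambda_k-\lambda_m}Q_1(\bPh_k,\widetilde\bPh_m)$, so being $\rho$-independent it vanishes for $k\ne m$, while a direct polar-coordinate evaluation of $\mathcal{B}$ on $\partial B_1$ gives $Q_1(\bPh_i,\widetilde\bPh_i)=\pm\gamma_i$, the $\mu$-part supplying $2\lambda_i\bT_i\cdot\widetilde\bT_i$ and the $(\mu+\nu)$-part supplying $I_i+J_i$ exactly as in \eqref{gamma}; that $\gamma_i\ne0$ (a known non-degeneracy of the leading singular functions of $\mathcal{L}$) legitimizes the divisions in \eqref{sif-formula}. (ii) $Q_\rho(\bPh_k,\bPs_i)=0$ for small $\rho$: $\bPs_i$ also solves $\mathcal{L}=0$ and vanishes on $\Gamma_1\cup\Gamma_J$ near $S_1$, so $Q_\rho(\bPh_k,\bPs_i)$ is $\rho$-independent; and since $S_1$ is the only singularity of $\bPs_i$, its corner decomposition (Theorem \ref{thm3.1}) gives $\bPs_i=O(r^{\lambda_1})$, $\mathcal{B}\bPs_i=O(r^{\lambda_1-1})$ there, whence a size count forces $Q_\rho(\bPh_k,\bPs_i)\to0$, so it is $\equiv0$. (iii) $Q_\rho(\w,\widetilde\bPh_i)\to0$ and $Q_\rho(\w,\bPs_i)\to0$: because the $\bPh_k$ vanish on $\Gamma_1\cup\Gamma_J$, $\w=\u-\sum_k c_k\bPh_k$ satisfies $\w(S_1)=\gbf_1(S_1)=\gbf_J(S_1)={\bf 0}$; splitting $\w=\w(S_1)+(\w-\w(S_1))$ in $\int_{\gamma_\rho}\mathcal{B}\widetilde\bPh_i\cdot\w\,dS$, the leading piece $\w(S_1)\cdot\int_{\gamma_\rho}\mathcal{B}\widetilde\bPh_i\,dS=O(\rho^{-\lambda_i})\,\w(S_1)$ vanishes, and the rest is controlled by $|\w|\lesssim r^{\alpha}$ ($\lambda_i<\alpha<1$) and the scaled trace bound $\|\mathcal{B}\w\|_{{\rm L}^2(\gamma_\rho)}\lesssim\rho^{-1/2}\|\nabla\w\|_{{\rm L}^2(A_\rho)}+\rho^{1/2}\|\nabla^2\w\|_{{\rm L}^2(A_\rho)}=O(\rho^{1/2})$ on a dyadic annulus $A_\rho\supset\gamma_\rho$. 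Combining (i)--(iii), the two master limits equal $\pm c_1\gamma_1$ and $\pm c_2\gamma_2$ with the same $\pm$; tracking that sign against the orientation of $\partial\Omega_\rho$ on $\gamma_\rho$ gives the $+$ sign, which upon rearrangement is precisely \eqref{sif-formula}.

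The principal difficulty is step (iii): pairing the merely ${\bf H}^2$ regular part $\w$ with the strongly singular dual $\widetilde\bPh_i\sim r^{-\lambda_i}$, $\lambda_i\in(1/2,1)$, is size-critical, and it is exactly here that the extra hypotheses $\gbf_1(S_1)=\gbf_J(S_1)={\bf 0}$ (equivalently $\w(S_1)={\bf 0}$) are indispensable -- they are also what makes the boundary integrals over $\Gamma_1,\Gamma_J$ in $\mathcal{C}_i(\f,\gbf)$ convergent in the first place. A secondary point is the careful sign and coefficient bookkeeping in (i) needed to recognize the angular integral of $\mathcal{B}$ evaluated on $\partial B_1$ as the stated constant $\gamma_i$ of \eqref{gamma}.
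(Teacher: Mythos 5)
Your proposal is correct and follows essentially the same strategy as the paper: a duality/Green's-identity argument on the punctured domain $\Omega_\rho$, with the corrector $\bPs_i$ chosen so that $\widetilde\bPh_i+\bPs_i$ has zero Dirichlet trace on all of $\Gamma$, and the constants $\gamma_i$ emerging from the limiting arc integral. The one genuine methodological variation is your step (i): rather than estimating the cross arc-pairings $Q_\rho(\bPh_k,\widetilde\bPh_m)$ (for $k\ne m$) and observing that they decay as $\rho^{\lambda_2-\lambda_1}\to 0$ as the paper does in \eqref{c1-gamma1-equality}, you combine $\rho$-independence of $Q_\rho$ (from the Betti identity on an annular sector, using that both factors vanish on the rays $\theta=\omega_1,\omega_2$) with the exact scaling $Q_\rho=\rho^{\lambda_k-\lambda_m}Q_1$ to conclude that the cross pairing is identically zero. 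This is cleaner and a standard dual-eigenfunction orthogonality observation, though it buys nothing essential here since $\lambda_1<\lambda_2$ makes the limit argument work anyway. The same $\rho$-independence trick is what you use in (ii) for $Q_\rho(\bPh_k,\bPs_i)$, again equivalent in conclusion to the paper's direct estimate in \eqref{phi-psi-eps} and \eqref{B-eps-plus-estimate}.

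Two small points of precision. In (iii) your scaled trace bound $\|\mathcal{B}\w\|_{\mathrm{L}^2(\gamma_\rho)}\lesssim \rho^{-1/2}\|\nabla\w\|_{\mathrm{L}^2(A_\rho)}+\rho^{1/2}\|\nabla^2\w\|_{\mathrm{L}^2(A_\rho)}$ yields only $o(\rho^{1/2-\delta})$ for any $\delta>0$ (since $\|\nabla\w\|_{\mathrm{L}^2(A_\rho)}$ is $O(\rho^{1-\delta})$ via $\nabla\w\in\mathrm{L}^q$ for all finite $q$, not $O(\rho)$); this is still ample since $\lambda_i<1$, but the claimed $O(\rho^{1/2})$ is slightly too strong as stated. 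The paper instead uses $\mathrm{H}^{2-1/p}(\Omega)\hookrightarrow\mathrm{W}^{1+1/p,p}(\Omega)$ and a trace estimate uniform in $\eps$, paired against $\|\v_1\|_{0,q,\Gamma_\eps}\to 0$, which is a somewhat tidier route to the same conclusion in \eqref{w-dual-gamma-estimate1}--\eqref{A-w-lim}. Second, for $Q_\rho(\w,\bPs_i)\to 0$ the relevant cancellation is the $\rho^{1/2}$ arclength factor against $\|\nabla\bPs_i\|_{\mathrm{L}^2(\gamma_\rho)}\le C\|\bPs_i\|_{3/2+\delta,\Omega}<\infty$; you signal this but do not spell it out, and it is worth being explicit because $\bPs_i\notin\mathrm{H}^2$ in general (only $\mathrm{H}^s$ for $s<1+\min\{\lambda_1,\kappa_1\}$), so the pairing is genuinely borderline. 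Neither point is a gap; the argument goes through.
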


\begin{proof}
Set $\v_i:={\widetilde\bPh}_i+\bPs_i$.
For $\epsilon>0$, let $\Omega_\epsilon=\{\x\in\Omega:|\x|>\epsilon\}$.
Multiplying $\v_1$ to the first equation of \eqref{Lame}, integrating the multiplied equation over $\Omega$, and using the integration by parts, we obtain
\begin{align}
\int_\O\f\cdot\v_1\,d\x&=\int_\O\mathcal{L}\u\cdot\v_1\,d\x =\lim_{\eps\rightarrow 0^+}\int_{\O_\eps}\mathcal{L}\u\cdot\v_1\,d\x\nonumber\\
&=\lim_{\eps\rightarrow 0^+}\left(A_\eps+B_\eps\right),\label{f-v1}
\end{align}
where
$$
\begin{aligned}
A_\eps&:=\int_{\partial\O_\eps}\left(-\m\frac{\partial\u}{\partial{\bf n}}\cdot\v_1-\left(\m+\n\right)\div\,\u\left(\v_1\cdot{\bf n}\right)\right)dS,\\
B_\eps&:=\int_{\partial\O_\eps}\left(\m\u\cdot\frac{\partial\v_1}{\partial{\bf n}}
+\left(\m+\n\right)\left(\u\cdot{\bf n}\right)\div\,\v_1\right)dS,
\end{aligned}
$$
and ${\bf n}$ denotes the outward unit normal vector on $\partial\O_\eps$.

First, we consider the quantity $A_\eps$ in \eqref{f-v1}.
For $\eps>0$, let $\Gamma_\epsilon=\{\x\in\O:|\x|=\eps,~\th\in(\o_1,\o_2)\}$ be the arc oriented counter-clockwise with respect to the origin.
By the decomposition \eqref{decomposition} and $\v_1={\bf 0}$ on $\partial\O_\eps\setminus\G_\eps\subset\G$,
\begin{equation}\label{A-eps-eq}
A_\eps=A_{\eps}^{{\rm R}}+A_{\eps}^++A_\eps^-,
\end{equation}
where
$$
\begin{aligned}
A_{\eps}^{{\rm R}}&:=\int_{\G_\eps}\left(-\m\frac{\partial\w}{\partial{\bf n}}\cdot\v_1-\left(\m+\n\right)\div\,\w\left(\v_1\cdot{\bf n}\right)\right)dS,\\
A_{\eps}^+&:=\sum_{i=1}^N c_i\int_{\G_\eps}\left(-\m\frac{\partial\bPh_i}{\partial{\bf n}}\cdot\bPs_1-\left(\m+\n\right)\div\,\bPh_i\left(\bPs_1\cdot{\bf n}\right)\right)dS,\\
A_{\eps}^-&:=\sum_{i=1}^N c_i\int_{\G_\eps}\left(-\m\frac{\partial\bPh_i}{\partial{\bf n}}\cdot{\widetilde\bPh}_1 -\left(\m+\n\right)\div\,\bPh_i\left({\widetilde\bPh}_1\cdot{\bf n}\right)\right)dS.
\end{aligned}
$$
Choose any $p,q\in(1,\infty)$ satisfying $1/p+1/q=1$ and $\l_1 q <1$.
Using H\"{o}lder's inequality, the extended trace theorem and Sobolev embedding ${\rm H}^{2-1/p}(\O)\hookrightarrow {\rm W}^{1+1/p,p}(\O)$ (cf. \cite{GE}), we obtain
\begin{align}
|A_{\eps}^{{\rm R}}|&\leq C_L\|\nab\w\|_{0,p,\G_\eps}\|\v_1\|_{0,q,\G_\eps}\leq C_L\|\w\|_{1,p,\partial\O_\eps}\|\v_1\|_{0,q,\G_\eps}\nonumber\\
&\leq C_L\|\w\|_{1+1/p,p,\O_\eps}\|\v_1\|_{0,q,\G_\eps}\leq C_L\|\w\|_{2-1/p,\O}\|\v_1\|_{0,q,\G_\eps},\label{w-dual-gamma-estimate1}
\end{align}
where $C_L$ denotes a positive constant depending on the Lam\'{e} constants $\m$ and $\n$.
Let $3/2<s<1+\min\{\lambda_1,\kappa_{1}\}$.
Since ${\widetilde\bPh}_i(S_2)={\widetilde\bPh}_i(S_J)={\bf 0}$ and $\|{\widetilde\bPh}_i\|_{s-1/2,\G_j}<\infty$ for $j=2,\ldots,J-1$, there exists a function $\boldsymbol\xi_i\in{\bf H}^{s}(\Omega)$ such that (cf. \cite{GE})
$$
\boldsymbol\xi_i|_{\G_j}=\left\{
\begin{aligned}
&-{\widetilde\bPh}_i|_{\G_j}&&\mbox{ for }j=2,\ldots,J-1,\\
&{\bf 0}&&\mbox{ otherwise}
\end{aligned}\right.
$$
and that
\begin{equation}\label{xi-estimate}
\|\boldsymbol\xi_i\|_{s,\O}\leq C\sum_{j=2}^{J-1}\|{\widetilde\bPh}_i\|_{s-1/2,\G_j}<\infty.
\end{equation}
Letting ${\boldsymbol\eta}_1:=\bPs_1-\boldsymbol\xi_1$, the system \eqref{Psi-prob} becomes
\begin{equation}\label{eta1-eq}
\left\{\begin{aligned}
\mathcal{L}{\boldsymbol\eta}_1&=-\mathcal{L}\boldsymbol\xi_1&&\mbox{ in }\O,\\
{\boldsymbol\eta}_1&={\bf 0}&&\mbox{ on }\G.
\end{aligned}\right.
\end{equation}
By Theorem \ref{thm2.2}, there exists a unique solution ${\boldsymbol\eta}_1\in{\bf H}^s(\Omega)$ to \eqref{eta1-eq} satisfying $\|{\boldsymbol\eta}_1\|_{s,\O}\leq C_L\|\mathcal{L}\boldsymbol\xi_1\|_{s-2,\O}$, and furthermore, by the triangle inequality and \eqref{xi-estimate}, we have
\begin{equation}\label{Psi-estimate}
\|\bPs_1\|_{s,\O}\leq\|{\boldsymbol\eta}_1\|_{s,\O}+\|\boldsymbol\xi_1\|_{s,\O}\leq C_L\|\boldsymbol\xi_1\|_{s,\O}<\infty.
\end{equation}
Since $|{\widetilde\bPh}_1|\leq C_L\eps^{-\lambda_1}$ on $\G_\eps$ and $|\bPs_1|_{\infty}<\infty$ (by \eqref{Psi-estimate}), it follows that
\begin{align}
\|\v_1\|_{0,q,\G_\eps}&\leq\left(\int_{\G_\eps}|{\widetilde\bPh}_1|^q dS\right)^{1/q}+\left(\int_{\G_\eps}|\bPs_1|^q dS\right)^{1/q}\nonumber\\
&\leq\left(\int_{\o_1}^{\o_2}C_L\eps^{1-\l_1 q}d\th\right)^{1/q}+|\bPs_1|_\infty\left(\int_{\G_\eps}dS\right)^{1/q}\nonumber\\
&\leq C_L\eps^{1/q-\l_1}\o^{1/q}+|\bPs_1|_{\infty}\eps^{1/q}\o^{1/q}\rightarrow 0\qquad\mbox{ as }\,\eps\rightarrow 0^+,\label{eps-limit}
\end{align}
where $\o=\o_2-\o_1$.
Applying the result \eqref{eps-limit} to \eqref{w-dual-gamma-estimate1}, it is seen that
\begin{equation}\label{A-w-lim}
\lim_{\eps\rightarrow 0^+}A_{\eps}^{{\rm R}}=0.
\end{equation}
Since $|\nab\bPh_i|\leq C_L\eps^{\l_i-1}$ on $\G_\eps$, we have
\begin{equation}\label{phi-psi-eps}
\begin{aligned}
|A_{\eps}^+|&\leq \sum_{i=1}^N c_i\int_{\o_1}^{\o_2}C_L \eps^{\l_i-1}|\bPs_1(\eps\cos\th,\eps\sin\th)|\,\eps \,d\th\\
&\leq \sum_{i=1}^N c_i \, C_L|\bPs_1|_{\infty}\eps^{\l_i}\o\rightarrow 0\quad\mbox{ as }\,\eps\rightarrow 0^+,
\end{aligned}
\end{equation}
and thus
\begin{equation}\label{A-plus-lim}
\lim_{\eps\rightarrow 0^+}A_{\eps}^+=0.
\end{equation}
By \eqref{A-w-lim} and \eqref{A-plus-lim}, the quantity $A_\eps$ in \eqref{A-eps-eq} has the following convergence:
\begin{equation}\label{A-eps-result}
\lim_{\eps\rightarrow 0^+}A_{\eps}=\lim_{\eps\rightarrow 0^+}A_{\eps}^-.
\end{equation}

Next, we consider the quantity $B_\eps$ in \eqref{f-v1}.
Since ${\bf u}={\bf g}$ on $\partial\O_\eps\setminus\G_\eps\subset\G$, by \eqref{decomposition}, we have
\begin{equation}\label{B-eps-eq}
\begin{aligned}
B_\eps&=B_{\eps,{\bf g}}+B_{\eps}^{{\rm R}}+B_\eps^+ +B_\eps^-,
\end{aligned}
\end{equation}
where
$$
\begin{aligned}
B_{\eps,{\bf g}}&:=\int_{\partial\O_\eps\setminus\G_\eps}\left(\m{\bf g}\cdot\frac{\partial\v_1}{\partial\nb}+\left(\m+\n\right)\left({\bf g}\cdot\nb\right)\div\,\v_1\right)dS,\\
B_{\eps}^{{\rm R}}&:=\int_{\G_\eps}\left(\m\w\cdot\frac{\partial\v_1}{\partial\nb}+\left(\m+\n\right)\left(\w\cdot\nb\right)\div\,\v_1\right)dS,\\
B_{\eps}^+&:=\sum_{i=1}^N c_i\int_{\G_\eps}\left(\m\bPh_i\cdot\frac{\partial\bPs_1}{\partial\nb}+\left(\m+\n\right)\left(\bPh_i\cdot\nb\right)\div\,\bPs_1\right)dS,\\
B_{\eps}^-&:=\sum_{i=1}^N c_i\int_{\G_\eps}\left(\m\bPh_i\cdot\frac{\partial{\widetilde\bPh}_1}{\partial\nb}+\left(\m+\n\right)\left(\bPh_i\cdot\nb\right)\div\,{\widetilde\bPh}_1\right)dS.
\end{aligned}
$$
Since $\pd{\widetilde\bPh}_1/\pd\nb=\l_1\eps^{-\l_1-1}{\widetilde{\bf T}}_1(\th)$ on $\G_\eps$ and $\w={\bf g}_J$ on $\G_J=\{(r\cos\th,r\sin\th)\in\G:\th=\o_2\}$, we obtain
\begin{equation}\label{w-dual-gamma}
\begin{aligned}
\int_{\Gamma_\epsilon}\w\cdot\frac{\partial{\widetilde\bPh}_1}{\partial{\bf n}}dS
&=\lambda_1\epsilon^{-\lambda_1}\int_{\omega_1}^{\omega_2}\w(\epsilon\cos\theta,\epsilon\sin\theta)\cdot{\widetilde{\bf T}}_1\left(\th\right)d\theta\\
&=\lambda_1\epsilon^{-\lambda_1}\bigg({\bf g}_J(\eps\cos\o_2,\eps\sin\o_2)\cdot{\widetilde{\bf A}}_1(\o_2)\\
&\qquad\qquad\;\;\;-\int_{\omega_1}^{\omega_2}\frac{d}{d\theta}\w(\epsilon\cos\theta,\epsilon\sin\theta)\cdot{\widetilde{\bf A}}_1\left(\th\right)d\theta\bigg),
\end{aligned}
\end{equation}
where ${\widetilde{\bf A}}_1(\th):=\int_{\o_1}^\th{\widetilde{\bf T}}_1\left(\sigma\right)d\sigma$.
Since ${\bf g}_J(S_1)={\bf 0}$, by H\"{o}lder's inequality and the Sobolev embedding ${\rm H}^{3/2-1/q}(\G_J)\hookrightarrow {\rm W}^{1,q}(\G_J)$,
\begin{align}
\left|{\bf g}_J(\eps\cos\o_2,\eps\sin\o_2)\right|&=\left|\int_{0}^\eps\frac{d}{dr}{\bf g}_J\left(r\cos\o_2,r\sin\o_2\right)dr\right|\nonumber\\
&\leq C\left(\int_0^\eps dr\right)^{1/p}\|{\bf g}_J\|_{1,q,\G_J}\nonumber\\
&\leq C\eps^{1/p}\|{\bf g}_J\|_{3/2-1/q,\G_J},\label{gj-estimate}
\end{align}
where $p,q\in(1,\infty)$ are H\"{o}lder conjugates with $\l_1 p<1$.
Since ${\widetilde{\bf T}}_1(\th)$ is bounded for $\th\in\left[\o_1,\o_2\right]$, there exists a constant $C_L>0$ such that
\begin{equation}\label{I-estimate}
|{\widetilde{\bf A}}_1(\th)|\leq C_L\qquad\forall\,\th\in\left[\o_1,\o_2\right].
\end{equation}
Using \eqref{gj-estimate} for ${\bf g}_J$ and \eqref{I-estimate} for ${\widetilde{\bf A}}_1$, since $\|r^{-\l_1}\|_{0,p,\G_\eps}\leq C\eps^{1/p-\l_1}$ and $\|\nab\w\|_{0,q,\G_\eps}\leq C\|\w\|_{2-1/q,\O}$ (see \eqref{w-dual-gamma-estimate1}), the quantity \eqref{w-dual-gamma} is estimated as follows:
\begin{align}
&\left|\,\int_{\Gamma_\epsilon}\w\cdot\frac{\partial{\widetilde\bPh}_1}{\partial{\bf n}}dS\,\right|\nonumber\\
&\qquad\leq C_L\eps^{1/p-\l_1}\|{\bf g}_J\|_{3/2-1/q,\G_J}+C_L\int_{\omega_1}^{\omega_2}\epsilon^{-\lambda_1}\left|\nabla\w\left(\epsilon\cos\theta,\epsilon\sin\theta\right)\right|\epsilon \, d\theta\nonumber\\
&\qquad=C_L\eps^{1/p-\l_1}\|{\bf g}_J\|_{3/2-1/q,\G_J}+C_L\int_{\Gamma_\epsilon}r^{-\lambda_1}\left|\nabla\w\right|dS\nonumber\\
&\qquad\leq C_L\eps^{1/p-\l_1}\|{\bf g}_J\|_{3/2-1/q,\G_J}+C_L\|r^{-\l_1}\|_{0,p,\G_\eps}\|\nab\w\|_{0,q,\G_\eps}\nonumber\\
&\qquad\leq C_L\eps^{1/p-\l_1}\left(\|{\bf g}_J\|_{3/2-1/q,\G_J}+\|\w\|_{2-1/q,\O}\right).\label{w-dual-gamma-estimate2}
\end{align}
Similarly, we also have
\begin{equation}\label{w-dual-gamma-estimate3}
\left|\,\int_{\Gamma_\eps}\left(\w\cdot{\bf n}\right)\div\,{\widetilde\bPh}_1 \,dS\,\right|\leq C_L\eps^{1/p-\l_1}\left(\|{\bf g}_J\|_{3/2-1/q,\G_J}+\|\w\|_{2-1/q,\O}\right).
\end{equation}
Since $\w\in{\bf H}^2(\Omega)$, it follows from the extended trace theorem and \eqref{Psi-estimate} that $|\w|_\infty<\infty$ and $\|\nab\bPs_1\|_{0,\G_\eps}\leq C\|\bPs_1\|_{1+\d,\partial\O_\eps}\leq C\|\bPs_1\|_{3/2+\d,\O_\eps}<\infty$ for $0<\d<\l_{1}-1/2$, yielding
\begin{align}
\left|\,\int_{\G_\eps}\left(\m\w\cdot\frac{\partial\bPs_1}{\partial{\bf n}}+\left(\m+\n\right)\left(\w\cdot\nb\right)\div\,\bPs_1\right)dS\,\right|&\leq C_L\|\w\|_{0,\G_\eps}\|\nab\bPs_1\|_{0,\G_\eps}\nbr\\
&\leq C_L\eps^{1/2}\,|\w|_{\infty}\|\bPs_1\|_{3/2+\d,\O_\eps}.\label{w-dual-gamma-estimate4}
\end{align}
By \eqref{w-dual-gamma-estimate2}, \eqref{w-dual-gamma-estimate3} and \eqref{w-dual-gamma-estimate4},
\begin{equation}\label{B-w-estimate}
\begin{aligned}
|B_{\eps}^{{\rm R}}|\leq &\, C_L\eps^{1/p-\l_1}\left(\|{\bf g}_J\|_{3/2-1/q,\G_J}+\|\w\|_{2-1/q,\O}\right)\\
&\,+C_L\eps^{1/2}\,|\w|_\infty\|\bPs_1\|_{3/2+\d,\O_\eps}\rightarrow 0\qquad\mbox{ as }\,\eps\rightarrow 0^+.
\end{aligned}
\end{equation}
%and thus we obtain
%\begin{equation}\label{B-w-lim}
%\lim_{\eps\rightarrow 0}B_{\eps,\w}=0.
%\end{equation}
%
On the other hand, since $\|\bPh_i\|_{0,\G_\eps}\leq C\eps^{1/2+\l_i}$ and $\|\bPs_1\|_{1,\G_\eps}\leq C\|\bPs_1\|_{s,\O}<\infty$,
\begin{equation}\label{B-eps-plus-estimate}
|B_\eps^+|\leq C_L\sum_{i=1}^N\eps^{1/2+\l_i}\|\bPs_1\|_{s,\O}\rightarrow 0\qquad\mbox{ as }\,\eps\rightarrow 0^+.
\end{equation}
Applying the results \eqref{B-w-estimate} and \eqref{B-eps-plus-estimate} to \eqref{B-eps-eq}, we get
\begin{equation}\label{B-eps-result}
\begin{aligned}
\lim_{\eps\rightarrow 0^+}B_{\eps}&=\sum_{j=1}^J\int_{\G_j}\left(\m{\bf g}_j\cdot\frac{\partial\v_1}{\partial\nb_j}+(\m+\n)({\bf g}_j\cdot\nb_j)\,\div\,\v_1\right)dS+\lim_{\eps\rightarrow 0^+}B_\eps^-.
\end{aligned}
\end{equation}
So, combining \eqref{A-eps-result} and \eqref{B-eps-result} with \eqref{f-v1}, we have
\begin{equation}\label{c1-limit}
\begin{aligned}
\int_\O\f\cdot\v_1 \,d\x=&\,\sum_{j=1}^J\int_{\G_j}\left(\mu{\bf g}_j\cdot\frac{\partial\v_1}{\partial\nb_j}+(\mu+\nu)({\bf g}_j\cdot\nb_j)\,\div\,\v_1\right)dS\\
&\,+\lim_{\eps\rightarrow 0^+}(A_\eps^- +B_\eps^-).
\end{aligned}
\end{equation}
A direct calculation gives
\begin{equation}\label{c1-gamma1-equality}
\begin{aligned}
\lim_{\eps\rightarrow 0^+}(A_\eps^- +B_\eps^-)&=\lim_{\eps\rightarrow 0^+}\epsilon^{\l_i-\l_1}\sum_{i=1}^Nc_i\bigg(\m\int_{\omega_1}^{\omega_2}\left(\l_i+\l_1\right)\left(\bT_i(\theta)\cdot{\widetilde\bT}_1(\theta)\right)d\th\\
&\qquad\qquad\qquad\qquad\quad+(\mu+\nu)\int_{\o_1}^{\o_2}\left(I_{1}^*(\th)+J_{1}^*(\th)\right)d\th\bigg)\\
&=c_1\g_1,
\end{aligned}
\end{equation}
where $\g_1$ is the number defined by \eqref{gamma}, and
$$
\begin{aligned}
I_{1}^*(\th)&:=(\l_i+\l_1)[\bT_i(\th)\cdot\eb_r(\th)][{\widetilde\bT}_1(\th)\cdot\eb_r(\th)],\\
J_{1}^*(\th)&:=[\pd_\th\bT_i(\th)\cdot\eb_\th(\th)][{\widetilde\bT}_1(\th)\cdot\eb_r(\th)]
-[\bT_i(\th)\cdot\eb_r(\th)][\pd_\th{\widetilde\bT}_1(\th)\cdot\eb_\th(\th)].
\end{aligned}
$$
Hence, the desired formula of $c_1$ in \eqref{sif-formula} follows.

\vspace{0.1cm}

We now derive the formula of $c_2$. Consider the boundary value problem of $\u_1:=\w+c_2\bPh_2$ as follows:
\begin{equation}\label{Lame-u1}
\left\{\begin{aligned}
\mathcal{L}\u_1&=\f&&\mbox{ in }\O,\\
\u_1&={\bf g}_j&&\mbox{ on }\G_j,\quad j=1,J,\\
\u_1&={\bf g}_j-c_1\bPh_1|_{\G_j}&&\mbox{ on }\G_j,\quad j=2,\cdots,J-1.
\end{aligned}\right.
\end{equation}
Multiplying $\v_2$ to the first equation of \eqref{Lame-u1}, integrating the multiplied equation over $\O$, and using the integration by parts,
$$
\begin{aligned}
\int_\O\f\cdot\v_2\,d\x&=\lim_{\eps\rightarrow 0^+}\int_{\O_\eps}\mathcal{L}\u_1\cdot\v_2\,d\x\\
&=\lim_{\eps\rightarrow 0^+}(C_\eps+D_\eps),
\end{aligned}
$$
where
$$
\begin{aligned}
C_\eps&:=-\int_{\partial\O_\eps}\left(\m\frac{\partial\u_1}{\partial{\bf n}}\cdot\v_2+(\m+\n)\,\div\,\u_1(\v_2\cdot{\bf n})\right)dS,\\
D_\eps&:=\int_{\partial\O_\eps}\left(\m\u_1\cdot\frac{\partial\v_2}{\partial{\bf n}}
+(\m+\n)(\u_1\cdot{\bf n})\,\div\,\v_2\right)dS.
\end{aligned}
$$
As in the derivation of \eqref{c1-limit} from \eqref{A-eps-result} and \eqref{B-eps-result}, we obtain
\begin{equation}\label{C2-eps}
\begin{aligned}
\int_\O\f\cdot\v_2 \,d\x=&\,\sum_{j=1}^J\int_{\G_j}\left(\mu{\bf g}_j\cdot\frac{\partial\v_2}{\partial\nb_j}+(\mu+\nu)({\bf g}_j\cdot\nb_j)\,\div\,\v_2\right)dS\\
&\,-c_1\sum_{j=2}^{J-1}\int_{\G_j}\left(\mu{\bPh}_1\cdot\frac{\partial\v_2}{\partial\nb_j}+(\mu+\nu)({\bPh}_1\cdot\nb_j)\,\div\,\v_2\right)dS\\
&\,+\lim_{\eps\rightarrow 0^+}\,(C_\eps^-+D_\eps^-),
\end{aligned}
\end{equation}
where
$$
\begin{aligned}
C_{\eps}^-&:=-c_2\int_{\G_\eps}\left(\m\frac{\partial\bPh_2}{\partial{\bf n}}\cdot{\widetilde\bPh}_2 +(\m+\n)\,\div\,\bPh_2({\widetilde\bPh}_2\cdot{\bf n})\right)dS,\\
D_{\eps}^-&:=c_2\int_{\G_\eps}\left(\m\bPh_2\cdot\frac{\partial{\widetilde\bPh}_2}{\partial\nb}+(\m+\n)(\bPh_2\cdot\nb)\,\div\,{\widetilde\bPh}_2\right)dS.
\end{aligned}
$$
As shown in \eqref{c1-gamma1-equality}, the equality \eqref{C2-eps} becomes
$$
\mathcal{C}_2(\f,{\bf g})+c_1\mathcal{C}_*=c_2\g_2,
$$
which is the desired formula of $c_2$ in \eqref{sif-formula}.
\end{proof}

\vspace{0.1cm}

We next show the a priori estimates of the coefficients $c_i$.

\begin{thm}\label{thm3.3}
Assume that $\f\in{\bf L}^2(\Omega)$ and that ${\bf g}_j\in{\bf H}^{3/2}(\G_j)$ for $1\leq j\leq J$ satisfy \eqref{g-condition} and ${\bf g}_1(S_1)={\bf g}_J(S_1)={\bf 0}$. Then there exists a positive constant $C$ independent of the Lam\'{e} constants $\m$ and $\nu$ such that the coefficients $c_i$, formulated by \eqref{sif-formula}, have the following a priori estimate:
\begin{equation}\label{Lame-sif-estimate}
|c_i|\leq C\bigg(\mu^{-1}\|\f\|_{0,\Omega}+\sum_{j=1}^J\|{\bf g}_j\|_{3/2,\G_j}\bigg).
\end{equation}
\end{thm}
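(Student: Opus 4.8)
The plan is to bound each of the three ingredients in the formulae \eqref{sif-formula} separately: the numerators $\mathcal{C}_i(\f,{\bf g})$, the cross term $\mathcal{C}_*$, and, crucially, to bound the denominators $\g_i$ away from zero by a constant independent of $\m$ and $\n$. Starting with the numerators, I would estimate each piece of $\mathcal{C}_i(\f,{\bf g})$ using the representation $\v_i = {\widetilde\bPh}_i + \bPs_i$. The volume term $\int_\O \f\cdot\v_i\,d\x$ is controlled by $\|\f\|_{0,\O}\|\v_i\|_{0,\O}$, and since ${\widetilde\bPh}_i = r^{-\l_i}{\widetilde\bT}_i(\th)$ with $\l_i < 1$, one has ${\widetilde\bPh}_i\in{\bf L}^2(\O)$ with a bound depending only on $\O$; the bound on $\|\bPs_i\|_{0,\O}$ comes from the a priori estimate \eqref{Psi-estimate} proved inside Theorem \ref{thm3.2}, which gives $\|\bPs_i\|_{s,\O}\le C_L\|\boldsymbol\xi_i\|_{s,\O}<\infty$. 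The boundary terms involving ${\bf g}_j$ pair $\|{\bf g}_j\|_{3/2,\G_j}$ (or a weaker trace norm) against the traces of $\frac{\partial}{\partial\nb_j}({\widetilde\bPh}_i+\bPs_i)$ and $\div({\widetilde\bPh}_i+\bPs_i)$ on the sides $\G_j$ with $j\ne 1,J$; here one uses that ${\widetilde\bPh}_i$ is smooth away from the origin (the sides $\G_j$, $2\le j\le J-1$, stay a positive distance from $S_1$) and that $\bPs_i\in{\bf H}^s(\O)$ with $s>3/2$ so its normal derivative has an ${\rm L}^2$ trace. The term $\mathcal{C}_*$ is handled the same way, with $\bPh_1=r^{\l_1}\bT_1$ playing the role of the boundary datum on $\G_j$, $2\le j\le J-1$, where it is bounded.

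The real obstacle is the lower bound on $|\g_i|$, uniform in $\m,\n$. Looking at \eqref{gamma}, $\g_i = \m\int_{\o_1}^{\o_2} 2\l_i\bT_i\cdot{\widetilde\bT}_i\,d\th + (\m+\n)\int_{\o_1}^{\o_2}(I_i+J_i)\,d\th$, and both $\bT_i,{\widetilde\bT}_i$ and hence $I_i,J_i$ depend on $C_{\m,\n}=(3\m+\n)/(\m+\n)$, which ranges over $(1,3)$ as $\m,\n$ vary. So $\g_i$ is really a function of the single parameter $t:=C_{\m,\n}\in(1,3)$, up to the overall scaling; I would factor $\g_i$ as $(\m+\n)\,\widehat\g_i(t)$ (absorbing the $\m$ in front of the first integral as $\frac{\m}{\m+\n}\cdot 2\l_i(\cdots) = \frac{t-1}{2}\cdot 2\l_i(\cdots)$ since $\m/(\m+\n) = (t-1)/2$) and show $\widehat\g_i(t)$ is a nonvanishing continuous function of $t$ on the compact interval $[1,3]$, hence bounded below by a positive constant. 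The nonvanishing of $\widehat\g_i(t)$ is precisely the statement that the generalized eigenfunctions $\bPh_i$ and ${\widetilde\bPh}_i$ are dual with a nonzero pairing — equivalently, that $\l_i$ is a simple root of the characteristic equation \eqref{eigen-eq}; this is a known fact (cf. \cite{KMRS,GS}) but I would need to either cite it or verify it by a short computation showing $\g_i$ equals, up to a nonzero constant, the derivative of the left side of \eqref{eigen-eq} evaluated at $\l_i$, which is nonzero by the strict ordering \eqref{Eigenvalue}.

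With the uniform lower bound $|\g_i|\ge c_0 > 0$ in hand (where $c_0$ depends only on $\O$, after the $(\m+\n)$ factor cancels the one hidden in $\mathcal{C}_i$ — wait, it does not cancel, so I must track it), the estimate \eqref{Lame-sif-estimate} follows by combining: $|c_1| = |\mathcal{C}_1|/|\g_1| \le C\big(\m^{-1}\|\f\|_{0,\O} + \sum_j\|{\bf g}_j\|_{3/2,\G_j}\big)$ once I check that the $\m$-dependence on the right side of the numerator bound is exactly a factor $\m$ (from the boundary terms $\m\,{\bf g}_j\cdot\partial_{\nb_j}(\cdots)$ and $(\m+\n)({\bf g}_j\cdot\nb_j)\div(\cdots)$, using \eqref{remark-eq1}-type cancellation so that $(\m+\n)\div{\widetilde\bPh}_i$ stays bounded — here though $\div{\widetilde\bPh}_i = r^{-\l_i-1}\cdot(\text{const}\cdot(C_{\m,\n}-1))\,(\cdots)$ and $(\m+\n)(C_{\m,\n}-1) = 2\m$, so the product $(\m+\n)\div{\widetilde\bPh}_i$ is $O(\m)$, matching), divided by $\g_i = O(\m+\n)$, and similarly $\mathcal{C}_i$ contains a $\m^{-1}\|\f\|$ contribution after dividing the volume term $\int\f\cdot\v_i$ by the $(\m+\n)$ in $\g_i$ — hmm, the volume term has no $\m$ in front, so $\int\f\cdot\v_i / \g_i = O(\|\f\|_{0,\O}/(\m+\n)) = O(\m^{-1}\|\f\|_{0,\O})$, consistent. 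For $c_2 = (\mathcal{C}_2 + c_1\mathcal{C}_*)/\g_2$, I then add the bound on $c_1\mathcal{C}_*$: $\mathcal{C}_*/\g_2$ is $O(1)$ in $\m,\n$ by the same bookkeeping (the $(\m+\n)$ weight in $\mathcal{C}_*$ cancels that in $\g_2$, and $\bPh_1$ is $\m,\n$-independent), so $|c_1\mathcal{C}_*/\g_2|\le C|c_1|$, and the bound on $|c_1|$ already established closes the induction. The careful tracking of which terms carry a $\m$, a $(\m+\n)$, or neither — using the structural identities from Remark \ref{rk1.2} adapted to the fixed-Lamé-constant setting — is the bookkeeping heart of the argument, but it is routine once the uniform nonvanishing of $\g_i$ is secured.
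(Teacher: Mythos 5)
Your overall strategy — bound the numerators $\mathcal{C}_i$, $\mathcal{C}_*$ and the denominators $\g_i$ separately and track the $\m,\n$-dependence — matches the paper's, but there are two genuine gaps, one of which is a flat-out error.

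\textbf{The factorization of $\g_i$ is wrong and internally inconsistent.} You propose $\g_i = (\m+\n)\,\widehat\g_i(t)$ with $t = C_{\m,\n}\in(1,3)$ and claim $\widehat\g_i$ is nonvanishing on $[1,3]$. But the second integral $\int(I_i+J_i)\,d\th$ itself carries a hidden factor $\m/(\m+\n) = (t-1)/2$: the same cancellation that makes $(\m+\n)\div{\widetilde\bPh}_i = O(\m)$ (which you correctly observe in a parenthetical) makes $(\m+\n)\int(I_i+J_i)\,d\th = O(\m)$. Together with the explicit $\m$ in front of the first integral, this gives $\g_i = \m\cdot\tilde\g_i(t)$ with $\tilde\g_i$ bounded; equivalently $\widehat\g_i(t) = \tfrac{t-1}{2}\tilde\g_i(t)$, which \emph{vanishes} at $t=1$ (the incompressible limit $\n\to\infty$). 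So your putative uniform lower bound on $\widehat\g_i$ fails precisely in the regime the paper cares about. The correct scaling is $|\g_i|^{-1}\le C\m^{-1}$, which is what the paper uses; your final estimate $|c_1|\le C\|\f\|_{0,\O}/(\m+\n) + \cdots$ is asserting a stronger bound than what actually holds, and the reasoning that produces it is circular once the false lower bound is removed.

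\textbf{The boundary terms on $\G_1$ and $\G_J$ are omitted.} You only treat the sides $\G_j$ with $2\le j\le J-1$, where ${\widetilde\bPh}_i$ is smooth because those sides avoid the re-entrant corner. But $\mathcal{C}_i$ sums over all $j=1,\ldots,J$, and $\G_1,\G_J$ touch the origin, where ${\widetilde\bPh}_i\sim r^{-\l_i}$ and $\pd_{\nb_j}{\widetilde\bPh}_i\sim r^{-\l_i-1}$ are genuinely singular. The integral $\int_{\G_1}|{\bf g}_1|\,r^{-\l_i-1}\,dr$ diverges unless one exploits ${\bf g}_1(S_1)={\bf 0}$ together with Hardy's inequality — this is exactly the technical core of the paper's proof of the $G_{i,1}^-$ and $G_{i,J}^-$ bounds, and it is where the hypothesis ${\bf g}_1(S_1)={\bf g}_J(S_1)={\bf 0}$ in the theorem statement enters. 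Your proposal uses that hypothesis nowhere. Additionally, for the $\bPs_i$-contributions you cite the bound $\|\bPs_i\|_{s,\O}\le C_L\|\boldsymbol\xi_i\|_{s,\O}$ with a constant $C_L$ depending on $\m,\n$; to get a $\m,\n$-uniform theorem you need instead the uniform a priori estimate from Theorem \ref{thm2.2}, namely $\m\|\bPs_i\|_{s,\O}+(\m+\n)\|\div\bPs_i\|_{s-1,\O}\le C\m$ with $C$ independent of the Lam\'e constants.
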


\begin{proof}
First, we consider the estimate of $\C_i(\f,{\bf g})$ in \eqref{sif-formula}.
The quantity $\C_i(\f,{\bf g})$ is rewritten as
$$
\C_i(\f,{\bf g})=F_i-\sum_{j=1}^J(G_{i,j}^-+G_{i,j}^+),
$$
where
$$
\begin{aligned}
F_i&:=\int_\O\f\cdot\left({\widetilde\bPh}_i+\bPs_i\right)d{\bf x},\\
G_{i,j}^-&:=\int_{\G_j}\bigg(\m{\bf g}_j\cdot\frac{\pd{\widetilde\bPh}_i}{\pd\nb_j}+\left(\m+\n\right)\left({\bf g}_j\cdot\nb_j\right)\div\,{\widetilde\bPh}_i\bigg)dS,\\
G_{i,j}^+&:=\int_{\G_j}\bigg(\m{\bf g}_j\cdot\frac{\pd\bPs_i}{\pd\nb_j}+\left(\m+\n\right)\left({\bf g}_j\cdot\nb_j\right)\div\,\bPs_i\bigg)dS.
\end{aligned}
$$
Considering the form of ${\widetilde\bPh}_i$ in \eqref{Dual}, it is directly observed that
\begin{equation}\label{dual_sing_l2}
\|{\widetilde\bPh}_i\|_{0,\O}^2\leq C\int_0^{r_*}r^{-2\l_i+1}dr\leq C,
\end{equation}
where $r_*:=\mbox{diam}(S_1,\O)$.
In addition, Theorem \ref{thm2.2} gives that the solution $\bPs_i$ of \eqref{Psi-prob} has the following a priori estimate: for $1<s<1+\min\{\lambda_1,\kappa_1\}$,
\begin{align}
\m\|\bPs_i\|_{s,\O}+(\m+\n)\|\div\,\bPs_i\|_{s-1,\O}&\leq C\m\sum_{j=2}^{J-1}\|{\widetilde\bPh}_i\|_{s-1/2,\G_j}\nbr\\
&\leq C\m.\label{psi_estimate}
\end{align}
By \eqref{dual_sing_l2} and \eqref{psi_estimate}, we have
$$
\begin{aligned}
|F_i|&\leq C\|\f\|_{0,\O}\left(\|{\widetilde\bPh}_i\|_{0,\O}+\|\bPs_i\|_{0,\O}\right)\\
&\leq C\|\f\|_{0,\O}.
\end{aligned}
$$
Now, we consider the quantity $G_{i,j}^-$.
A direct calculation gives
\begin{equation}\label{div_dual_sing}
\begin{aligned}
\left(\m+\n\right)|\div\,{\widetilde\bPh}_i|=4\m\l_i r^{-\l_i-1}|K_i(\th)|\leq C\m r^{-\l_i-1},
\end{aligned}
\end{equation}
where
$$
K_i(\th):=\left\{\begin{aligned}
&\sin[(1+\l_1)(\th-\ol\o)]&&\mbox{ for }\,i=1,\\
&\cos[(1+\l_2)(\th-\ol\o)]&&\mbox{ for }\,i=2.
\end{aligned}\right.
$$
By \eqref{div_dual_sing}, $|\pd{\widetilde\bPh}_i/\pd\nb_j|\leq Cr^{-\l_i-1}$ and ${\bf g}_1(S_1)={\bf 0}$, we obtain
$$
\begin{aligned}
|G_{i,1}^-|&\leq C\m\int_0^{r_*}|{\bf g}_1(r\cos\omega_1,r\sin\omega_2)|\,r^{-\l_i-1}dr\\
&=C\m\int_0^{r_*}\left|\frac{1}{r}\int_0^r \frac{d}{dr}{\bf g}_1(r_1\cos\o_1,r_1\sin\o_1)\,dr_1\right| r^{-\l_i}dr\\
&\leq C\m\left(\int_0^{r_*}\left|\frac{1}{r}\int_0^r\frac{d}{dr}{\bf g}_1(r_1\cos\o_1,r_1\sin\o_1)\,dr_1\right|^p dr\right)^{1/p}\left(\int_0^{r_*}r^{-\l_i q}\,dr\right)^{1/q}\\
&\leq C\m\left(\int_0^{r_*}\left|\frac{d}{dr}{\bf g}_1(r\cos\o_1,r\sin\o_1)\right|^p dr\right)^{1/p}\qquad\mbox{(by Hardy's inequality)}\\
&\leq C\m\|{\bf g}_1\|_{1,p,\G_1}\leq C\m\|{\bf g}_1\|_{3/2,\G_1},
\end{aligned}
$$
where $p=1/\d$ and $q=(1-\d)^{-1}$ for $0<\d<1-\l_i$.
Similarly, we also have
$$
|G_{i,J}^-|\leq C\m\|{\bf g}_J\|_{3/2,\G_J}.
$$
Using \eqref{div_dual_sing}, for $j=2,3,\ldots,J-1$,
$$
\begin{aligned}
|G_{i,j}^-|&=\left|\int_{\G_j}\left(\m{\bf g}_j\cdot\frac{\pd{\widetilde\bPh}_i}{\pd\nb_j}+\left(\m+\n\right)\left({\bf g}_j\cdot\nb_j\right)\div\,{\widetilde\bPh}_i\right)dS\,\right|\\
&\leq C\m\int_{\G_j}|{\bf g}_j|r^{-\l_i-1}dS\leq C\m\|{\bf g}_j\|_{0,\G_j}\|r^{-\l_i-1}\|_{0,\G_j}\\
&\leq C\m\|{\bf g}_j\|_{3/2,\G_j}.
\end{aligned}
$$
By the Cauchy-Schwarz inequality and inequality \eqref{psi_estimate},
\begin{equation}\label{G_plus_ij}
\begin{aligned}
|G_{i,j}^+|&\leq C\|{\bf g}_j\|_{0,\G_j}\left(\m\|\bPs_i\|_{1,\G_j}+(\m+\n)\|\div\,\bPs_i\|_{0,\G_j}\right)\\
&\leq C\|{\bf g}_j\|_{0,\G_j}\left(\m\|\bPs_i\|_{s,\O}+(\m+\n)\|\div\,\bPs_i\|_{s-1,\O}\right)\quad\left(3/2<s<1+\min\{\lambda_1,\kappa_1\}\right)\\
&\leq C\m\|{\bf g}_j\|_{3/2,\G_j}.
\end{aligned}
\end{equation}
Hence, we have
\begin{align}
|\C_i(\f,{\bf g})|&\leq |F_i|+\sum_{j=1}^J(|G_{i,j}^-|+|G_{i,j}^+|)\nbr\\
&\leq C\|\f\|_{0,\O}+C\m\sum_{j=1}^J\|{\bf g}_j\|_{3/2,\G_j}.\label{Lame-Ci-estimate}
\end{align}

Next, we consider the estimate of $\C_*$ in \eqref{sif-formula}.
Since $|\bPh_1|\leq Cr^{\l_1}$ and $|\pd{\widetilde\bPh}_2/\pd\nb_j|\leq Cr^{-\l_2-1}$, it follows from \eqref{psi_estimate}, \eqref{div_dual_sing} and the extended trace theorem that
\begin{align}
|\C_*|&=\Bigg|\,\sum_{j=2}^{J-1}\int_{\G_j}\left(\m\bPh_1\cdot\frac{\pd\v_2}{\pd\nb_j}+\left(\m+\n\right)\left(\bPh_1\cdot\nb\right)\div\,\v_2\right)dS \,\Bigg|\nbr\\
&\leq C\m\sum_{j=2}^{J-1}\int_{\G_j}r^{\l_1-\l_2-1}dS+C\sum_{j=2}^{J-1}\|\bPh_1\|_{0,\G_j}\left(\m\|\bPs_2\|_{1,\G_j}+\left(\m+\n\right)\|\div\,\bPs_2\|_{0,\G_j}\right)\nbr\\
&\leq C\m+C\left(\m\|\bPs_2\|_{s,\O}+\left(\m+\n\right)\|\div\,\bPs_2\|_{s-1,\O}\right)\qquad\left(3/2<s<1+\min\{\lambda_1,\kappa_1\}\right)\nbr\\
&\leq C\m.\label{Lame-Cstar-estimate}
\end{align}
Since $|\gamma_i|^{-1}\leq C\mu^{-1}$,  we have from \eqref{Lame-Ci-estimate} that
$$
\begin{aligned}
|c_1|&\leq |\g_1|^{-1}|\C_1(\f,{\bf g})|\\
&\leq C\bigg(\m^{-1}\|\f\|_{0,\O}+\sum_{j=1}^J\|{\bf g}_j\|_{3/2,\G_j}\bigg),
\end{aligned}
$$
and furthermore, by \eqref{Lame-Cstar-estimate},
$$
\begin{aligned}
|c_2|&\leq|\g_2|^{-1}(|\C_2(\f,{\bf g})|+|c_1||\C_*|)\\
&\leq C\bigg(\m^{-1}\|\f\|_{0,\O}+\sum_{j=1}^J\|{\bf g}_j\|_{3/2,\G_j}\bigg).
\end{aligned}
$$
Therefore, the desired estimate \eqref{Lame-sif-estimate} can be derived.
\end{proof}

\vspace{0.7cm}

\section{Generalized corner singularities of the Stokes equations}\label{sec4}

\vspace{0.2cm}

In this section, we show the generalized corner singularity expansion of the Stokes equations with inhomogeneous boundary condition \eqref{Stokes} (cf. \cite{CKT}).
To investigate the convergence of both singular and regular parts regarding the penalized Lam\'{e} system \eqref{Lame_eps}, we here discuss corner singularity results on both the inhomogeneous boundary condition and non-divergence-free property of velocity field, and in particular, this generalization does not use a cutoff function either.
Based on the inhomogeneous boundary condition and non-divergence-free property, we try to derive the explicit formulae of coefficients in singular part to the Stokes equations, which will be the limits of the coefficients $c_i\,(1\leq i\leq N)$, given by \eqref{sif-formula}, when the second Lam\'{e} constant $\nu$ goes to infinity.

\vspace{0.1cm}

We introduce singular functions of the Stokes operator with Dirichlet boundary condition.
As derived in \cite[Section 5.1]{KMRS}, its eigenvalues, denoted by $\kappa_i$, are roots of the trigonometric equation: $\sin^2(\kappa_i\,\o)-\kappa_i^2\sin^2\o=0$, where the first three positive eigenvalues are real and ordered as
\begin{equation}\label{Stokes-eigenvalue}
\begin{aligned}
\mbox{\bf Case 1. }~&1/2<\kappa_1<\pi/\o<\kappa_2=1<\kappa_3<2\pi/\o&&~\mbox{ if }~\o\in(\pi,\o_*],\\
\mbox{\bf Case 2. }~&1/2<\kappa_1<\pi/\o<\kappa_2<\kappa_3=1<2\pi/\o&&~\mbox{ if }~\o\in(\o_*,2\pi),
\end{aligned}
\end{equation}
with $(0,2\pi)\ni\o_*\approx 1.4303\pi$ being the positive number satisfying $\tan\o_*=\o_*$.
Denote by $M$ the number of singular functions not in $\H^2(\O)$, i.e., $M=1$ if $\o\in(\pi,\o_*]$, and $M=2$ if $\o\in(\o_*,2\pi)$.
So, we remark that $1/2<\kappa_i<1$ for $1\leq i\leq M$.
Throughout this paper, it is assumed that $M=2$ without loss of generality.
Corresponding to each eigenvalue $\kappa_i$, the singular functions $\bPh_i^{\rm s}$ for velocity and $\phi_i^{\rm s}$ for pressure are defined as
\begin{equation}\label{Stokes-Singular}
\bPh_i^{\rm s}(r,\th)=\mu^{-1}r^{\kappa_i}\bT_i^{\rm s}(\th)\quad\mbox{and}\quad\phi_i^{\rm s}(r,\th)=r^{\kappa_i-1}\xi_i^{\rm s}(\th),
\end{equation}
where $\bT_i^{\rm s}(\th)$ and $\xi_i^{\rm s}(\th)$ are trigonometric eigenfunctions corresponding to $\kappa_i$, given by
\begin{equation}\label{Stokes-T-def}
\bT_i^{\rm s}(\th)=A_i^{\rm s}(\th-\ol\o)\,\eb_r(\th)+B_i^{\rm s}(\th-\ol\o)\,\eb_\th(\th),\qquad\xi_i^{\rm s}(\th)=C_i^{\rm s}(\th-\ol\o),
\end{equation}
and
$$
\begin{aligned}
\begin{pmatrix}
A^{\rm s}_1(\th)\\B^{\rm s}_1(\th)\\C^{\rm s}_1(\th)
\end{pmatrix}
:=&\,-\begin{pmatrix}
(1-\kappa_1)\sin[(1-\kappa_1)\th]\\
(1+\kappa_1)\cos[(1-\kappa_1)\th]\\
-4\kappa_1\sin[(1-\kappa_1)\th]
\end{pmatrix}\\
&\, +\left(\cos(\kappa_1\omega)+\kappa_1\cos\omega\right)\begin{pmatrix}
\sin[(1+\kappa_1)\th]\\
\cos[(1+\kappa_1)\th]\\
0
\end{pmatrix},\\
\begin{pmatrix}
A^{\rm s}_2(\th)\\B^{\rm s}_2(\th)\\C^{\rm s}_2(\th)
\end{pmatrix}:=&\,-\begin{pmatrix}
(1-\kappa_2)\cos[(1-\kappa_2)\th]\\
-(1+\kappa_2)\sin[(1-\kappa_2)\th]\\
-4\kappa_2\cos[(1-\kappa_2)\th]
\end{pmatrix}\\
&\, +\left(\cos(\kappa_2\omega)-\kappa_2\cos\omega\right)\begin{pmatrix}
\cos[(1+\kappa_2)\th]\\
-\sin[(1+\kappa_2)\th]\\
0
\end{pmatrix}.
\end{aligned}
$$

\vspace{0.1cm}

Now, we state the corner singularity expansion for the Stokes problem \eqref{Stokes} (cf. \cite{CKT}).

\begin{thm}\label{thm4.1}
If $\f\in{\bf L}^2(\O),$ $\zeta\in{\rm H}^{1}(\O)$ with $\zeta(S_1)=0$ and ${\bf g}_j\in{\bf H}^{3/2}(\Gamma_j)$ for $1\leq j\leq J$ with \eqref{g-condition}, then there exists a unique solution pair $[\u^{{\rm s}},\,p^{{\rm s}}]\in{\bf H}^1(\O)\times{\rm L}_0^2(\O)$ to the Stokes problem \eqref{Stokes} that can be written as follows: there are numbers $c^{\rm s}_i\in\mathbb{R}$ $(1\leq i\leq M)$ and a pair $\left[\w^{\rm s},\,\sigma^{\rm s}\right]\in{\bf H}^2(\O)\times{\rm H}^{1}(\O)$ satisfying
\begin{equation}\label{Stokes-decomposition}
\left[\u^{{\rm s}},\,p^{{\rm s}}\right]=\left[\w^{\rm s},\,\sigma^{\rm s}\right]+\sum_{i=1}^{M}c^{\rm s}_i\left[\bPh^{\rm s}_i,\,\phi^{\rm s}_i\right].
\end{equation}

\end{thm}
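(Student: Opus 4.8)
The plan is to adapt the reduction used in the proof of Theorem~\ref{thm3.1}: remove the inhomogeneous divergence and the inhomogeneous boundary data by two successive lifts, invoke the classical corner-singularity expansion for the resulting homogeneous, divergence-free Stokes system, and finally discard the cutoff function by absorbing the smooth tail of each $\chi\bPh_i^{\rm s}$ into the regular part.

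First, we lift the data. Extending $\zeta\in{\rm H}^1(\O)$ to ${\rm H}^1(\R^2)$ and applying the Newtonian potential produces ${\bf v}_\zeta=\nab(\Delta^{-1}\tilde\zeta)\in\H^2(\O)$ with $\div\,{\bf v}_\zeta=\zeta$ in $\O$. The corrected boundary datum ${\bf g}_j-{\bf v}_\zeta|_{\G_j}\in\H^{3/2}(\G_j)$ is continuous across the vertices, and since $\int_\G{\bf v}_\zeta\cdot{\bf n}\,dS=\int_\O\zeta\,d\x=\int_\G{\bf g}\cdot{\bf n}\,dS$ by \eqref{g-condition} and the solvability hypothesis for \eqref{Stokes}, it still satisfies $\sum_{j}\int_{\G_j}({\bf g}_j-{\bf v}_\zeta|_{\G_j})\cdot{\bf n}_j\,dS=0$. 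Hence, exactly as in \eqref{z-3order-estimate}, \cite[Theorem~6.2]{ASVR} yields ${\bf z}\in\H^3(\O)$ with $\curl\,{\bf z}|_{\G_j}={\bf g}_j-{\bf v}_\zeta|_{\G_j}$, so that $\curl\,{\bf z}\in\H^2(\O)$ and $\div\,\curl\,{\bf z}=0$. With $\u_0:=\u^{\rm s}-{\bf v}_\zeta-\curl\,{\bf z}$, the pair $[\u_0,\,p^{\rm s}]$ solves
\begin{equation*}
-\mu\Delta\u_0+\nab p^{\rm s}=\f+\mu\Delta{\bf v}_\zeta+\mu\Delta\curl\,{\bf z}\in{\bf L}^2(\O),\qquad \div\,\u_0=0\ \ \mbox{in }\O,\qquad \u_0={\bf 0}\ \ \mbox{on }\G,
\end{equation*}
and the existence and uniqueness of $[\u_0,\,p^{\rm s}]\in\H_0^1(\O)\times{\rm L}_0^2(\O)$, hence of $[\u^{\rm s},\,p^{\rm s}]\in\H^1(\O)\times{\rm L}_0^2(\O)$ for \eqref{Stokes}, follow from the inf-sup condition on the polygon and the Lax-Milgram theorem (cf.~\cite{GRF}).

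Next, the classical corner-singularity expansion for the homogeneous divergence-free Stokes system with ${\bf L}^2$ right-hand side (cf.~\cite[Theorem~2.1]{CKT}) gives $c_i^{\rm s}\in\Rbb$ and $[\w_{\rm R},\,\sigma_{\rm R}]\in\H^2(\O)\times{\rm H}^1(\O)$ with $\w_{\rm R}\in\H_0^1(\O)$ such that $\u_0=\w_{\rm R}+\sum_{i=1}^{M}c_i^{\rm s}\chi\bPh_i^{\rm s}$ and $p^{\rm s}=\sigma_{\rm R}+\sum_{i=1}^{M}c_i^{\rm s}\chi\phi_i^{\rm s}$, where $\chi=\chi(r)$ is a smooth cutoff equal to $1$ near $S_1$ and supported in a small neighborhood of $S_1$. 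Setting
\begin{equation*}
\w^{\rm s}:=\w_{\rm R}+{\bf v}_\zeta+\curl\,{\bf z}+\sum_{i=1}^{M}c_i^{\rm s}(\chi-1)\bPh_i^{\rm s},\qquad \sigma^{\rm s}:=\sigma_{\rm R}+\sum_{i=1}^{M}c_i^{\rm s}(\chi-1)\phi_i^{\rm s},
\end{equation*}
and using that $\bPh_i^{\rm s}=\mu^{-1}r^{\kappa_i}\bT_i^{\rm s}(\th)$ and $\phi_i^{\rm s}=r^{\kappa_i-1}\xi_i^{\rm s}(\th)$ are smooth on $\ol\O\setminus\{S_1\}$ while $\chi-1$ vanishes near $S_1$, one gets $(\chi-1)\bPh_i^{\rm s}\in\H^2(\O)$ and $(\chi-1)\phi_i^{\rm s}\in{\rm H}^1(\O)$; combined with ${\bf v}_\zeta,\curl\,{\bf z}\in\H^2(\O)$ this yields $[\w^{\rm s},\,\sigma^{\rm s}]\in\H^2(\O)\times{\rm H}^1(\O)$, and by construction $[\u^{\rm s},\,p^{\rm s}]=[\w^{\rm s},\,\sigma^{\rm s}]+\sum_{i=1}^{M}c_i^{\rm s}[\bPh_i^{\rm s},\,\phi_i^{\rm s}]$, i.e.~\eqref{Stokes-decomposition}. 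The identities $\div\,\bPh_i^{\rm s}=0$ and $-\mu\Delta\bPh_i^{\rm s}+\nab\phi_i^{\rm s}={\bf 0}$ also show $\div\,\w^{\rm s}=\zeta$ and that the expansion is consistent with \eqref{Stokes}.

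The delicate point is the divergence handling in the first step: one needs an $\H^2(\O)$-regular right inverse of $\div$ and must re-lift the corrected boundary data without destroying the zero-flux condition \eqref{g-condition}. (If one instead lifts the divergence inside $\H^2(\O)\cap\H_0^1(\O)$, so that no re-lifting of ${\bf g}$ is required, then the hypothesis $\zeta(S_1)=0$ becomes essential, since on a non-convex polygon $\div$ maps $\H^2(\O)\cap\H_0^1(\O)$ onto a proper subspace of ${\rm H}^1(\O)\cap{\rm L}_0^2(\O)$ cut out by a corner condition at $S_1$.) In any case $\zeta(S_1)=0$ and the vertex matching of ${\bf g}$ will be needed again in Section~\ref{sec4} to make the corner terms vanish in the derivation of the coefficient formulae of Theorem~\ref{thm1.3}. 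The remaining verifications — that the reduced right-hand side lies in ${\bf L}^2(\O)$, and that removing the cutoff keeps $\w^{\rm s}\in\H^2(\O)$ and $\sigma^{\rm s}\in{\rm H}^1(\O)$ — are routine.
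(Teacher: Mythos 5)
Your proof is correct, but it takes a genuinely different reduction from the paper's. The paper performs a single lift: it invokes the extended trace theorem (\cite{GE}) to produce $\z\in\H^2(\O)$ with $\z|_{\G_j}={\bf g}_j$, sets $\u_0:=\u^{\rm s}-\z$, and keeps the resulting nonzero divergence datum $\zeta-\div\,\z\in\bar{\rm H}^1(\O)$, applying \cite[Theorem~2.1]{CKT} in its full (non-solenoidal) form. You instead perform two lifts — the Newtonian-potential lift ${\bf v}_\zeta$ to kill $\zeta$, then the curl lift from \cite[Theorem~6.2]{ASVR} applied to the corrected boundary data ${\bf g}-{\bf v}_\zeta|_\G$ — and thereby reduce to the divergence-free, homogeneous-boundary form of \cite[Theorem~2.1]{CKT}. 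Both routes end with the same cutoff-absorption step. Your route is slightly longer and requires verifying that the corrected boundary data still satisfy \eqref{g-condition} (you check this correctly via the divergence theorem and the compatibility $\int_\O\zeta\,d\x=\int_\G{\bf g}\cdot\nb\,dS$), but it has the advantage of invoking \cite[Theorem~2.1]{CKT} only in its cleaner solenoidal form and of sidestepping the question of whether the cited expansion accepts a general $\bar{\rm H}^1$ divergence datum. Your closing observation — that lifting $\zeta$ over all of $\R^2$ avoids the non-surjectivity of $\div:\H^2\cap\H_0^1\to\bar{\rm H}^1$ on a re-entrant polygon, whereas an internal $\H^2\cap\H_0^1$ lift would genuinely need $\zeta(S_1)=0$ — is a good remark; in the paper's version the hypothesis $\zeta(S_1)=0$ plays no explicit role in Theorem~\ref{thm4.1}'s proof either, and is there chiefly for the coefficient formulae in Theorem~\ref{thm4.2}.
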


\begin{proof}
From the extended trace theorem \cite{GE}, there exists a vector function ${\bf z}\in{\bf H}^2(\O)$ satisfying ${\bf z}|_{\Gamma_j}={\bf g}_j$ for $1\leq j\leq J$ and
$$
\|{\bf z}\|_{2,\O}\leq C\sum_{j=1}^{J}\|{\bf g}_j\|_{3/2,\Gamma_j}.
$$
Setting $\u_0:=\u^{{\rm s}}-{\bf z}$, the system \eqref{Stokes} can be rewritten as
\begin{equation}\label{mod-Stokes}
\left\{\begin{aligned}
-\mu\Delta{\bf u}_0+\nabla p&={\bf f}+\mu\Delta{\bf z}&&\mbox{ in }\Omega,\\
\div\,\u_0&=\zeta-\div\,{\bf z}&&\mbox{ in }\Omega,\\
\u_0&={\bf 0}&&\mbox{ on }\Gamma.
\end{aligned}\right.
\end{equation}
Since $\f+\mu\Delta\z\in{\bf L}^2(\O)$ and $\zeta-\div\,\z\in\bar{\rm H}^1(\Omega)$, by \cite[Theorem 2.1]{CKT}, there exist $c^{\rm s}_i\in\mathbb{R}$ for $1\leq i\leq M$ and a pair $\left[\u_{\rm R},\,p_{\rm R}\right]\in{\bf H}^2(\O)\times{\rm H}^1(\O)$ such that
$$
\begin{aligned}
\left[\u_0,\,p\right]&=\left[\u_{\rm R},\,p_{\rm R}\right]+\sum_{i=1}^{M} c^{\rm s}_i\ch\left[\bPh^{\rm s}_i,\,\phi^{\rm s}_i\right]\\
&=\left(\left[\u_{\rm R},\,p_{\rm R}\right]+\sum_{i=1}^{M} c^{\rm s}_i(\ch-1)\left[\bPh^{\rm s}_i,\,\phi^{\rm s}_i\right]\right)+\sum_{i=1}^{M} c^{\rm s}_i\left[\bPh^{\rm s}_i,\,\phi^{\rm s}_i\right],
\end{aligned}
$$
where $\ch=\ch(r)$ is the cutoff function defined in the proof of Theorem \ref{thm3.1}.
Since the value of $(\ch-1)\left[\bPh^{\rm s}_i,\,\phi^{\rm s}_i\right]$ is zero near the re-entrant corner $S_1$, it belongs to $\H^2(\O)\times{\rm H}^1(\O)$ so that the desired decomposition \eqref{Stokes-decomposition} is obtained by letting $\w^{\rm s}:=\u_{\rm R}+\sum_{i=1}^{M}c^{\rm s}_i(\ch-1)\bPh^{\rm s}_i+\z\in{\bf H}^2(\O)$ and $\sigma^{\rm s}:=p_{\rm R}+\sum_{i=1}^{M}c^{\rm s}_i(\ch-1)\phi^{\rm s}_i\in{\rm H}^1(\O)$.
\end{proof}

\vspace{0.1cm}

We next formulate the coefficients $c^{\rm s}_i$ in \eqref{Stokes-decomposition}.
To derive their formulae, we need to define the dual singular functions ${\widetilde\bPh}^{\rm s}_i$ and ${\widetilde\phi}^{\rm s}_i$ of the Stokes operator as
\begin{equation}\label{Stokes-Dual}
{\widetilde\bPh}^{\rm s}_i(r,\th)=\mu^{-1}r^{-\kappa_i}{\widetilde\bT}^{\rm s}_i(\th)\quad\mbox{and}\quad{\widetilde\phi}^{\rm s}_i(r,\th)=r^{-\kappa_i-1}{\widetilde\xi}^{\rm s}_i(\th),
\end{equation}
where ${\widetilde\bT}^{\rm s}_i(\th)$ and ${\widetilde\xi}^{\rm s}_i(\th)$ are obtained by replacing $\kappa_i$ with $-\kappa_i$ in the eigenfunctions $\bT^{\rm s}_i(\th)$ and $\xi^{\rm s}_i(\th)$ given by \eqref{Stokes-T-def}, respectively.
Using ${\widetilde\bPh}^{\rm s}_i$ and ${\widetilde\phi}^{\rm s}_i$ in \eqref{Stokes-Dual}, we give our desired formulae of $c^{\rm s}_i$ in the following theorem.

\begin{thm}\label{thm4.2}
Assume that $\f\in{\bf L}^2(\Omega)$, $\zeta\in{\rm H}^1(\Omega)$ and ${\bf g}_j\in{\bf H}^{3/2}(\G_j)$ for $1\leq j\leq J$ with \eqref{g-condition} and ${\bf g}_1(S_1)={\bf g}_J(S_1)={\bf 0}$.
For $1\leq i\leq M$, let $\g^{\rm s}_i\in\Rbb$ be the number given by
\begin{equation}\label{Stokes-gamma}
\g^{\rm s}_i:=\int_{\o_1}^{\o_2}\left(2\kappa_i\bT^{\rm s}_i(\th)\cdot{\widetilde\bT}^{\rm s}_i(\th)-\xi_i^{\rm s}(\th)\,[{\widetilde\bT}^{\rm s}_i(\th)\cdot\eb_r(\th)]+[\bT_i^{\rm s}(\th)\cdot\eb_r(\th)]\,{\widetilde\xi}^{\rm s}_i(\th)\right)d\th.
\end{equation}
Then, the coefficients $c^{\rm s}_1$ and $c^{\rm s}_2$ in \eqref{Stokes-decomposition} have the following formulae:
\begin{equation}\label{Stokes-sif-formula}
 c^{\rm s}_1=\frac{\mathcal{C}^{\rm s}_1(\f,\zeta,{\bf g})}{\g^{\rm s}_1}\quad\mbox{and}\quad
 c^{\rm s}_2=\frac{\mathcal{C}^{\rm s}_2(\f,\zeta,{\bf g})+c^{\rm s}_1\mathcal{C}^{\rm s}_*}{\g^{\rm s}_2},
\end{equation}
where
\begin{equation}\label{Stokes-C}
\begin{aligned}
 \mathcal{C}^{\rm s}_i(\f,\zeta,{\bf g}):=&\,\int_\O\left(\f\cdot\left(\mu{\widetilde\bPh}^{\rm s}_i+\bPs^{\rm s}_i\right)-\zeta\left(\mu{\widetilde\ph}^{\rm s}_i+\ps^{\rm s}_i\right)\right)d\x\\
 &\,-\sum_{j=1}^J\int_{\G_j}\left(\m\,{\bf g}_j\cdot\frac{\partial}{\partial{\bf n}_j}\left(\mu{\widetilde\bPh}^{\rm s}_i+\bPs^{\rm s}_i\right)-\left({\bf g}_j\cdot{\bf n}_j\right)\left(\mu{\widetilde\ph}^{\rm s}_i+\ps^{\rm s}_i\right)\right)dS,\\
 \mathcal{C}^{\rm s}_*:=&\,\sum_{j=2}^{J-1}\int_{\G_j}\left(\m\bPh^{\rm s}_1\cdot\frac{\partial}{\partial\nb_j}\left(\mu{\widetilde\bPh}^{\rm s}_2+\bPs^{\rm s}_2\right)-\left(\bPh^{\rm s}_1\cdot\nb_j\right)\left(\mu{\widetilde\ph}^{\rm s}_2+\ps^{\rm s}_2\right)\right)dS,
\end{aligned}
\end{equation}
and $\left[\bPs^{\rm s}_i,\,\psi^{\rm s}_i\right]\in{\bf H}^1(\Omega)\times{\rm L}_0^2(\Omega)$ is the weak solution pair of
\begin{equation}\label{Stokes-ps}
\left\{\begin{aligned}
 -\mu\Delta\bPs^{\rm s}_i+\nabla\psi^{\rm s}_i&={\bf 0}&&\mbox{ in }\Omega,\\
 \div\,\bPs^{\rm s}_i&=0&&\mbox{ in }\Omega,\\
 \bPs^{\rm s}_i&={\bf 0}&&\mbox{ on }\Gamma_j,\quad j=1,J,\\
 \bPs^{\rm s}_i&=-\mu{\widetilde\bPh}^{\rm s}_i&&\mbox{ on }\Gamma_j,\quad j=2,\cdots,J-1.
\end{aligned}\right.
\end{equation}
\end{thm}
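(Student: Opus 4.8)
The plan is to mimic the proof of Theorem \ref{thm3.2}, now carrying the pressure variable along and using Green's identity for the Stokes operator in place of the one for $\mathcal{L}$. Fix $i\in\{1,2\}$ and introduce the dual test pair
$[\v_i,\,q_i]:=[\,\mu{\widetilde\bPh}^{\rm s}_i+\bPs^{\rm s}_i,\ \mu{\widetilde\phi}^{\rm s}_i+\psi^{\rm s}_i\,]$.
Away from $S_1$ this pair solves the homogeneous Stokes system $-\mu\D\v_i+\nab q_i={\bf 0}$, $\div\,\v_i=0$ (the dual singular pair $[{\widetilde\bPh}^{\rm s}_i,{\widetilde\phi}^{\rm s}_i]$ does so on $\S\setminus\{0\}$, and $[\bPs^{\rm s}_i,\psi^{\rm s}_i]$ does so on all of $\O$ by \eqref{Stokes-ps}), and by construction $\v_i={\bf 0}$ on all of $\G$: it vanishes on $\G_1,\G_J$ because both ${\widetilde\bPh}^{\rm s}_i$ and $\bPs^{\rm s}_i$ vanish there, and on $\G_2,\dots,\G_{J-1}$ because $\bPs^{\rm s}_i=-\mu{\widetilde\bPh}^{\rm s}_i$ there. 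First I would record the well-posedness of \eqref{Stokes-ps}: its compatibility condition $\sum_{j=2}^{J-1}\int_{\G_j}{\widetilde\bPh}^{\rm s}_i\cdot\nb_j\,dS=0$ follows from $\div\,{\widetilde\bPh}^{\rm s}_i=0$ on the sector together with ${\widetilde\bPh}^{\rm s}_i|_{\G_1}={\widetilde\bPh}^{\rm s}_i|_{\G_J}={\bf 0}$, the flux through the small arc $\G_\eps$ tending to $0$ because $\int_{\o_1}^{\o_2}{\widetilde\bT}^{\rm s}_i(\th)\cdot\eb_r(\th)\,d\th=0$; and, exactly as with ${\boldsymbol\eta}_1$ in the proof of Theorem \ref{thm3.2}, lifting the boundary datum $-\mu{\widetilde\bPh}^{\rm s}_i$ (smooth away from $S_1$ and vanishing at $S_2,S_J$) and invoking \cite[Theorem 2.1]{CKT} gives $[\bPs^{\rm s}_i,\psi^{\rm s}_i]\in\H^s(\O)\times{\rm H}^{s-1}(\O)$ for $3/2<s<1+\min\{\l_1,\kappa_1\}$; in particular $\bPs^{\rm s}_i$ and $\psi^{\rm s}_i$ are bounded near $S_1$, and $\v_i\in{\bf L}^2(\O)$, $q_i\in{\rm L}^{1}(\O)$ (the latter two because $\kappa_i<1$).

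Next, writing $\O_\eps=\{\x\in\O:|\x|>\eps\}$, applying the integration by parts for the Stokes operator on $\O_\eps$ and using that $[\v_1,q_1]$ solves the homogeneous Stokes system there, I would obtain
\[
\int_\O\big(\f\cdot\v_1-\zeta\,q_1\big)\,d\x=\lim_{\eps\to0^+}\int_{\pd\O_\eps}\Big(\m\tfrac{\pd\v_1}{\pd\nb}\cdot\u^{\rm s}-q_1(\u^{\rm s}\cdot\nb)-\m\tfrac{\pd\u^{\rm s}}{\pd\nb}\cdot\v_1+p^{\rm s}(\v_1\cdot\nb)\Big)\,dS,
\]
the interior integrals being finite by the integrability just noted (here $\nb$ is the outward normal of $\O_\eps$, equal to $-\eb_r$ on $\G_\eps$). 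Splitting $\pd\O_\eps=\G_\eps\cup(\pd\O_\eps\setminus\G_\eps)$: the part over $\pd\O_\eps\setminus\G_\eps\subset\G$ reduces, using $\v_1={\bf 0}$ and $\u^{\rm s}={\bf g}_j$ on $\G_j$, to $\sum_{j=1}^J\int_{\G_j}\big(\m\,{\bf g}_j\cdot\frac{\pd\v_1}{\pd\nb_j}-({\bf g}_j\cdot\nb_j)q_1\big)\,dS$, the only delicate contributions being those on $\G_1,\G_J$, where ${\widetilde\bPh}^{\rm s}_1$ and ${\widetilde\phi}^{\rm s}_1$ are singular at $S_1$; these are controlled exactly as in \eqref{gj-estimate}–\eqref{w-dual-gamma-estimate2}, using ${\bf g}_1(S_1)={\bf g}_J(S_1)={\bf 0}$, the embedding ${\rm H}^{3/2}(\G_j)\hookrightarrow{\rm W}^{1,p}(\G_j)$ and Hardy's inequality. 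On $\G_\eps$ I would insert the decomposition \eqref{Stokes-decomposition}: the terms pairing the regular part $[\w^{\rm s},\sigma^{\rm s}]$ with the singular dual pair tend to $0$ as $\eps\to0^+$ by the same H\"older/trace/Sobolev-embedding argument that gave \eqref{A-w-lim} and \eqref{B-w-estimate}; the terms pairing $[\bPh^{\rm s}_i,\phi^{\rm s}_i]$ with $[\bPs^{\rm s}_1,\psi^{\rm s}_1]$ vanish because $\bPs^{\rm s}_1,\psi^{\rm s}_1$ are bounded near $S_1$ while $\bPh^{\rm s}_i\sim r^{\kappa_i}\to0$; what survives is the pairing of $\sum_i c^{\rm s}_i[\bPh^{\rm s}_i,\phi^{\rm s}_i]$ against $\m[{\widetilde\bPh}^{\rm s}_1,{\widetilde\phi}^{\rm s}_1]$ on $\G_\eps$.

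For that surviving term, on $\G_\eps$ one has $\nb=-\eb_r$, $\frac{\pd}{\pd\nb}=-\frac{\pd}{\pd r}$ and $dS=\eps\,d\th$, so substituting $\bPh^{\rm s}_i=\m^{-1}r^{\kappa_i}\bT^{\rm s}_i$, $\phi^{\rm s}_i=r^{\kappa_i-1}\xi^{\rm s}_i$, ${\widetilde\bPh}^{\rm s}_1=\m^{-1}r^{-\kappa_1}{\widetilde\bT}^{\rm s}_1$, ${\widetilde\phi}^{\rm s}_1=r^{-\kappa_1-1}{\widetilde\xi}^{\rm s}_1$ turns the integrand into $\eps^{\kappa_i-\kappa_1}$ times a fixed function of $\th$; since $\kappa_1<\kappa_2$, only $i=1$ survives in the limit, and the $\th$-integral collapses precisely to $\int_{\o_1}^{\o_2}\big(2\kappa_1\bT^{\rm s}_1\cdot{\widetilde\bT}^{\rm s}_1-\xi^{\rm s}_1[{\widetilde\bT}^{\rm s}_1\cdot\eb_r]+[\bT^{\rm s}_1\cdot\eb_r]{\widetilde\xi}^{\rm s}_1\big)\,d\th=\g^{\rm s}_1$, so the limit equals $c^{\rm s}_1\g^{\rm s}_1$. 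Collecting the pieces gives $\mathcal{C}^{\rm s}_1(\f,\zeta,{\bf g})=c^{\rm s}_1\g^{\rm s}_1$, the first formula in \eqref{Stokes-sif-formula}. For $c^{\rm s}_2$ I would repeat the argument with the pair $[\u^{\rm s}_1,p^{\rm s}_1]:=[\,\w^{\rm s}+c^{\rm s}_2\bPh^{\rm s}_2,\ \sigma^{\rm s}+c^{\rm s}_2\phi^{\rm s}_2\,]$, which solves the Stokes system with data $\f,\zeta$ and boundary values ${\bf g}_j$ on $\G_1,\G_J$ and ${\bf g}_j-c^{\rm s}_1\bPh^{\rm s}_1|_{\G_j}$ on $\G_2,\dots,\G_{J-1}$ (note $\div\,\u^{\rm s}_1=\zeta$ since $\div\,\bPh^{\rm s}_1=0$, and $\bPh^{\rm s}_1|_{\G_1}=\bPh^{\rm s}_1|_{\G_J}={\bf 0}$), tested now against $[\v_2,q_2]$: the extra boundary contribution produced by $-c^{\rm s}_1\bPh^{\rm s}_1$ on $\G_2,\dots,\G_{J-1}$ is exactly $-c^{\rm s}_1\mathcal{C}^{\rm s}_*$, and the surviving arc term is $c^{\rm s}_2\g^{\rm s}_2$, yielding $\mathcal{C}^{\rm s}_2(\f,\zeta,{\bf g})+c^{\rm s}_1\mathcal{C}^{\rm s}_*=c^{\rm s}_2\g^{\rm s}_2$.

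The main obstacle I expect is not the algebra of the arc limit — a routine trigonometric computation mirroring \eqref{c1-gamma1-equality} — but the analytic bookkeeping near $S_1$: establishing the solvability of \eqref{Stokes-ps} and enough regularity of $[\bPs^{\rm s}_i,\psi^{\rm s}_i]$ (boundedness near the corner, and admissibility of the duality pairings appearing in $\mathcal{C}^{\rm s}_i$ and $\mathcal{C}^{\rm s}_*$) to justify the integrations by parts and the limit passages, together with the vanishing of the boundary terms on the two corner-forming sides $\G_1,\G_J$, where the dual singular pressure $r^{-\kappa_1-1}{\widetilde\xi}^{\rm s}_1$ is most strongly singular; here the hypotheses ${\bf g}_1(S_1)={\bf g}_J(S_1)={\bf 0}$ are used in the same essential way as in the proof of Theorem \ref{thm3.2}.
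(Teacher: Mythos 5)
Your proposal follows essentially the same route as the paper's proof: the same dual test pair $[\v_i,q_i]=[\mu{\widetilde\bPh}^{\rm s}_i+\bPs^{\rm s}_i,\ \mu{\widetilde\phi}^{\rm s}_i+\psi^{\rm s}_i]$, the same Green identity for the Stokes operator on $\Omega_\eps$, the same splitting of the boundary integral into the $\G$-part and the arc $\G_\eps$, the same limit arguments sending the regular-regular and regular-$\bPs^{\rm s}_1$ pairings to zero, the same $\eps^{\kappa_i-\kappa_1}$ scaling isolating $c^{\rm s}_1\gamma^{\rm s}_1$, and the same auxiliary problem for $[\w^{\rm s}+c^{\rm s}_2\bPh^{\rm s}_2,\,\sigma^{\rm s}+c^{\rm s}_2\phi^{\rm s}_2]$ to extract $c^{\rm s}_2$. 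Your one genuine addition is the explicit check of the compatibility condition $\sum_{j}\int_{\G_j}{\widetilde\bPh}^{\rm s}_i\cdot\nb_j\,dS=0$ for the solvability of \eqref{Stokes-ps}, which the paper leaves implicit when it lifts the boundary datum and invokes \cite[Theorem 2.1]{CKT}; this is a small but legitimate refinement, and the rest of your outline is correct.
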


\begin{proof}
Set $\v^{\rm s}_i:=\mu{\widetilde\bPh}^{\rm s}_i+\bPs^{\rm s}_i$ and $q^{\rm s}_i:=\mu{\widetilde\ph}^{\rm s}_i+\ps^{\rm s}_i$. Using the integration by parts, the system \eqref{Stokes} gives
\begin{align}
\int_\Omega\left(\f\cdot\v^{\rm s}_1-\zeta q^{\rm s}_1\right)d\x&=\lim_{\eps\rightarrow 0^+}\int_{\Omega_\eps}\left(\left(-\mu\Delta\u^{{\rm s}}+\nabla p^{{\rm s}}\right)\cdot\v^{\rm s}_1-\div\,\u^{{\rm s}}\,q^{\rm s}_1\right)d\x\nbr\\
&=\lim_{\eps\rightarrow 0^+}\left(A_\eps+B_\eps\right),\label{f-A-B-equality}
\end{align}
where $\Omega_\eps=\{\x\in\Omega:\left|\x\right|>\eps\}$ for $\eps>0,$ and
$$
\begin{aligned}
A_\eps&:=\int_{\partial\Omega_\eps}\left(-\mu\frac{\partial\u^{{\rm s}}}{\partial\nb}\cdot\v^{\rm s}_1+p^{{\rm s}}\left(\v^{\rm s}_1\cdot\nb\right)\right)dS,\\
B_\eps&:=\int_{\partial\Omega_\eps}\left(\mu\u^{{\rm s}}\cdot\frac{\partial\v^{\rm s}_1}{\partial\nb}-\left(\u^{{\rm s}}\cdot\nb\right)q^{\rm s}_1\right)dS.
\end{aligned}
$$

We first consider $A_\eps$ in \eqref{f-A-B-equality}.
Since $\left[\u^{{\rm s}},\,p^{{\rm s}}\right]=\left[\w^{\rm s},\,\sigma^{\rm s}\right]+\sum_{i=1}^{M} c^{\rm s}_i\left[\bPh^{\rm s}_i,\,\ph^{\rm s}_i\right]$ in $\Omega$ and $\v^{\rm s}_1={\bf 0}$ on $\partial\Omega_\eps\setminus\Gamma_\eps\subset\Gamma$, we have
\begin{equation}\label{Stokes-A-eps}
A_\eps=A_\eps^{{\rm R}}+A_\eps^++A_\eps^-,
\end{equation}
where
$$
\begin{aligned}
A_\eps^{{\rm R}}&:=\int_{\Gamma_\eps}\left(-\mu\frac{\partial\w^{\rm s}}{\partial\nb}\cdot\v^{\rm s}_1+\sigma^{\rm s}\left(\v^{\rm s}_1\cdot\nb\right)\right)dS,\\
A_\eps^+&:=\sum_{i=1}^{M} c^{\rm s}_i\int_{\Gamma_\eps}\left(-\mu\frac{\partial\bPh^{\rm s}_i}{\partial\nb}\cdot\bPs^{\rm s}_1+\ph^{\rm s}_i\left(\bPs^{\rm s}_1\cdot\nb\right)\right)dS,\\
A_\eps^-&:=\sum_{i=1}^{M} c^{\rm s}_i\int_{\Gamma_\eps}\left(-\mu\frac{\partial\bPh^{\rm s}_i}{\partial\nb}\cdot\mu{\widetilde\bPh}^{\rm s}_1+\ph^{\rm s}_i\left(\mu{\widetilde\bPh}^{\rm s}_1\cdot\nb\right)\right)dS,
\end{aligned}
$$
and $\G_\eps=\{\x\in\Omega:\left|\x\right|=\eps,\,\theta\in\left(\o_1,\o_2\right)\}$ is the arc oriented counter-clockwise with respect to the origin.
Using H\"{o}lder's inequality with $p,q\in(1,\infty)$ satisfying $1/p+1/q=1$ and $\kappa_{1}q<1$, the extended trace theorem and Sobolev embedding ${\rm H}^{2-1/p}(\Omega)\hookrightarrow {\rm W}^{1+1/p,p}(\Omega)$,
\begin{align}
\left|A_\eps^{{\rm R}}\right|&\leq C\left(\mu\|\nabla\w^{\rm s}\|_{0,p,\Gamma_\eps}+\|\sigma^{\rm s}\|_{0,p,\Gamma_\eps}\right)\|\v^{\rm s}_1\|_{0,q,\Gamma_\eps}\nonumber\\
&\leq C\left(\mu\|\w^{\rm s}\|_{1,p,\partial\Omega_\eps}+\|\sigma^{\rm s}\|_{0,p,\partial\Omega_\eps}\right)\|\v^{\rm s}_1\|_{0,q,\Gamma_\eps}\nonumber\\
&\leq C\left(\mu\|\w^{\rm s}\|_{1+1/p,p,\Omega_\eps}+\|\sigma^{\rm s}\|_{1/p,p,\Omega_\eps}\right)\|\v^{\rm s}_1\|_{0,q,\Gamma_\eps}\nonumber\\
&\leq C\left(\mu\|\w^{\rm s}\|_{2-1/p,\Omega}+\|\sigma^{\rm s}\|_{1-1/p,\Omega}\right)\|\v^{\rm s}_1\|_{0,q,\Gamma_\eps}.\label{A-eps-R-estimate1}
\end{align}
Let $3/2<l<1+\kappa_{1}$.
Since ${\widetilde\bPh}^{\rm s}_1(S_2)={\widetilde\bPh}^{\rm s}_1(S_J)={\bf 0}$ and $\|{\widetilde\bPh}^{\rm s}_1\|_{l-1/2,\G_j}<\infty$ for $j=2,\cdots,J-1$, there exists a vector function $\boldsymbol\xi^{\rm s}_1\in{\bf H}^l(\Omega)$ such that
$$
\boldsymbol\xi^{\rm s}_1|_{\G_j}=\left\{
\begin{aligned}
&-\mu{\widetilde\bPh}^{\rm s}_1|_{\G_j}&&\mbox{ for }j=2,\ldots,J-1,\\
&{\bf 0}&&\mbox{ otherwise}
\end{aligned}\right.
$$
and that
\begin{equation}\label{xi-s-estimate}
\|\boldsymbol\xi^{\rm s}_1\|_{l,\O}\leq C\mu\sum_{j=2}^{J-1}\|{\widetilde\bPh}^{\rm s}_1\|_{l-1/2,\G_j}<\infty.
\end{equation}
With ${\boldsymbol\eta}^{\rm s}_1:=\bPs^{\rm s}_1-\boldsymbol\xi^{\rm s}_1$, the problem \eqref{Stokes-ps} becomes
$$
\left\{\begin{aligned}
-\mu\Delta{\boldsymbol\eta}^{\rm s}_1+\nabla\psi^{\rm s}_1&=\mu\Delta{\boldsymbol\xi}^{\rm s}_1&&\mbox{ in }\Omega,\\
\div\,{\boldsymbol\eta}^{\rm s}_1&=-\div\,{\boldsymbol\xi}^{\rm s}_1&&\mbox{ in }\Omega,\\
{\boldsymbol\eta}^{\rm s}_1&={\bf 0}&&\mbox{ on }\Gamma.
\end{aligned}\right.
$$
By \cite[Theorem 2.1]{CKT}, the existence and uniqueness of $\left[{\boldsymbol\eta}^{\rm s}_1,\,\ps^{\rm s}_1\right]\in{\bf H}_0^l(\Omega)\times\bar{\rm H}^{l-1}(\Omega)$ are guaranteed with
\begin{equation}\label{eta-s-estimate}
\mu\|{\boldsymbol\eta}^{\rm s}_1\|_{l,\O}+\|\ps^{\rm s}_1\|_{l-1,\O}\leq C\mu\left(\|\Delta{\boldsymbol\xi}^{\rm s}_1\|_{l-2,\O}+\|\div\,{\boldsymbol\xi}^{\rm s}_1\|_{l-1,\O}\right).
\end{equation}
Using the triangle inequality, \eqref{eta-s-estimate} and \eqref{xi-s-estimate},
\begin{align}
\mu\|\bPs^{\rm s}_1\|_{l,\O}+\|\ps^{\rm s}_1\|_{l-1,\Omega}&\leq\mu\|{\boldsymbol\eta}^{\rm s}_1\|_{l,\O}+\mu\|\boldsymbol\xi^{\rm s}_1\|_{l,\O}+\|\ps^{\rm s}_1\|_{l-1,\Omega}\nonumber\\
&\leq C\mu\|\boldsymbol\xi^{\rm s}_1\|_{l,\O}<\infty.\label{Psi-s-estimate}
\end{align}
Since $|\mu{\widetilde\bPh}^{\rm s}_1|\leq C\eps^{-\kappa_{1}}$ on $\G_\eps$ and $|\bPs^{\rm s}_1|_{\infty}<\infty$, we obtain
\begin{align}
\|\v^{\rm s}_1\|_{0,q,\G_\eps}&\leq\left(\int_{\G_\eps}|\mu{\widetilde\bPh}^{\rm s}_1|^q dS\right)^{1/q}+\left(\int_{\G_\eps}|\bPs^{\rm s}_1|^q dS\right)^{1/q}\nonumber\\
&\leq\left(\int_{\o_1}^{\o_2}C\eps^{1-\kappa_{1} q}d\th\right)^{1/q}+|\bPs^{\rm s}_1|_\infty\left(\int_{\G_\eps}dS\right)^{1/q}\nonumber\\
&\leq C\eps^{1/q-\kappa_{1}}\o^{1/q}+|\bPs^{\rm s}_1|_{\infty}\,\eps^{1/q}\o^{1/q}\rightarrow 0\qquad\mbox{ as }\,\eps\rightarrow 0^+.\label{Stokes-eps-s-limit}
\end{align}
By \eqref{Stokes-eps-s-limit} and $\left[\w^{\rm s},\sigma^{\rm s}\right]\in{\bf H}^{2-1/p}(\Omega)\times{\rm H}^{1-1/p}(\Omega)$, the inequality \eqref{A-eps-R-estimate1} gives
\begin{equation}\label{Stokes-A-R-s-lim}
\lim_{\eps\rightarrow 0^+}A_{\eps}^{{\rm R}}=0.
\end{equation}
Moreover, since $\mu|\nab\bPh^{\rm s}_i|+|\ph^{\rm s}_i|\leq C\eps^{\kappa_i-1}$ on $\G_\eps$, we have
\begin{equation}\label{Stokes-A-plus-lim}
\begin{aligned}
\left|A_{\eps}^+\right|&\leq C\sum_{i=1}^{M} c^{\rm s}_i\int_{\o_1}^{\o_2}\eps^{\kappa_i-1}|\bPs^{\rm s}_1(\eps\cos\th,\eps\sin\th)|\,\eps \,d\th\\
&\leq C\sum_{i=1}^{M} c^{\rm s}_i \,|\bPs^{\rm s}_1|_{\infty}\,\eps^{\kappa_i}\o\rightarrow 0\qquad\mbox{ as }\,\eps\rightarrow 0^+.
\end{aligned}
\end{equation}
Using \eqref{Stokes-A-R-s-lim} and \eqref{Stokes-A-plus-lim}, the limit of $A_\eps$ given by \eqref{Stokes-A-eps} becomes
\begin{equation}\label{Stokes-A-eps-result}
\lim_{\eps\rightarrow 0^+}A_{\eps}=\lim_{\eps\rightarrow 0^+}A_{\eps}^-.
\end{equation}

We next consider $B_\eps$ in \eqref{f-A-B-equality}.
Since ${\bf u}^{{\rm s}}={\bf g}$ on $\partial\O_\eps\setminus\G_\eps\subset\G$, \eqref{Stokes-decomposition} implies that
\begin{equation}\label{Stokes-B-eps-eq}
\begin{aligned}
B_\eps&=B_{\eps,{\bf g}}+B_{\eps}^{{\rm R}}+B_\eps^+ +B_\eps^-,
\end{aligned}
\end{equation}
where
$$
\begin{aligned}
B_{\eps,{\bf g}}&:=\int_{\partial\O_\eps\setminus\G_\eps}\left(\m{\bf g}\cdot\frac{\partial\v^{\rm s}_1}{\partial\nb}-\left({\bf g}\cdot\nb\right)q^{\rm s}_1\right)dS,\\
B_{\eps}^{{\rm R}}&:=\int_{\G_\eps}\left(\m\w^{{\rm s}}\cdot\frac{\partial\v^{\rm s}_1}{\partial\nb}-\left(\w^{{\rm s}}\cdot\nb\right)q^{\rm s}_1\right)dS,\\
B_{\eps}^+&:=\sum_{i=1}^{M} c^{\rm s}_i\int_{\G_\eps}\left(\m\bPh^{\rm s}_i\cdot\frac{\partial\bPs^{\rm s}_1}{\partial\nb}-\left(\bPh^{\rm s}_i\cdot\nb\right)\ps^{\rm s}_1\right)dS,\\
B_{\eps}^-&:=\sum_{i=1}^{M} c^{\rm s}_i\int_{\G_\eps}\left(\m\bPh^{\rm s}_i\cdot\mu\frac{\partial{\widetilde\bPh}^{\rm s}_1}{\partial\nb}-\left(\bPh^{\rm s}_i\cdot\nb\right)\mu{\widetilde\ph}^{\rm s}_1\right)dS.
\end{aligned}
$$
By \eqref{Stokes-Dual}, a direct calculation gives
\begin{equation}\label{Stokes-Dual-property}
\frac{\pd{\widetilde\bPh}^{\rm s}_1}{\pd\nb}=\mu^{-1}\kappa_1\eps^{-\kappa_1-1}{\widetilde{\bf T}}^{\rm s}_1(\th)\quad\mbox{and}\quad{\widetilde\phi}^{\rm s}_1\nb=-\epsilon^{-\kappa_1-1}{\widetilde\xi}^{\rm s}_1(\th)\,{\bf e}_r(\th)\quad\mbox{ on }\,\G_\eps,
\end{equation}
where ${\bf e}_r(\th):=(\cos\th,\sin\th)^t$.
By \eqref{Stokes-Dual-property} and $\w^{{\rm s}}={\bf g}_J$ for $\theta=\o_2$, we obtain
\begin{align}
&\int_{\Gamma_\epsilon}\left(\m\w^{{\rm s}}\cdot\mu\frac{\partial{\widetilde\bPh}^{\rm s}_1}{\partial{\bf n}}-\left(\w^{{\rm s}}\cdot\nb\right)\mu{\widetilde\phi}^{\rm s}_1\right)dS\nbr\\
&\qquad=\mu\epsilon^{-\kappa_1}\int_{\omega_1}^{\omega_2}\w^{{\rm s}}(\epsilon\cos\theta,\epsilon\sin\theta)\cdot\left(\kappa_1{\widetilde{\bf T}}^{\rm s}_1\left(\th\right)+{\widetilde\xi}^{\rm s}_1(\th)\,{\bf e}_r(\th)\right)d\theta\nbr\\
&\qquad=\mu\epsilon^{-\kappa_1}\bigg({\bf g}_J(\eps\cos\o_2,\eps\sin\o_2)\cdot{\widetilde{\bf A}}^{\rm s}_1(\o_2)\label{Stokes-w-dual-gamma}\\
&\qquad\qquad\qquad\;-\int_{\omega_1}^{\omega_2}\frac{d}{d\theta}\w^{{\rm s}}(\epsilon\cos\theta,\epsilon\sin\theta)\cdot{\widetilde{\bf A}}^{\rm s}_1\left(\th\right)d\theta\bigg),\nbr
\end{align}
where ${\widetilde{\bf A}}^{\rm s}_1(\th):=\int_{\o_1}^\th(\kappa_1{\widetilde{\bf T}}^{\rm s}_1(\sigma)+{\widetilde\xi}^{\rm s}_1(\sigma)\,{\bf e}_r(\sigma))\,d\sigma$.
Since ${\bf g}_J(S_1)={\bf 0}$, H\"{o}lder's inequality and the Sobolev embedding ${\rm H}^{3/2-1/q}(\G_J)\hookrightarrow {\rm W}^{1,q}(\G_J)$ imply that
\begin{align}
\left|{\bf g}_J(\eps\cos\o_2,\eps\sin\o_2)\right|&=\left|\int_{0}^\eps\frac{d}{dr}{\bf g}_J\left(r\cos\o_2,r\sin\o_2\right)dr\right|\nonumber\\
&\leq C\left(\int_0^\eps dr\right)^{1/p}\|{\bf g}_J\|_{1,q,\G_J}\nonumber\\
&\leq C\eps^{1/p}\|{\bf g}_J\|_{3/2-1/q,\G_J},\label{Stokes-gj-estimate}
\end{align}
where $p,q\in(1,\infty)$ are H\"{o}lder conjugates with $\kappa_1 p<1$.
Since ${\widetilde{\bf T}}^{\rm s}_1(\th)$ and ${\widetilde\xi}^{\rm s}_1(\th)$ are bounded for $\th\in\left[\o_1,\o_2\right]$, there exists a constant $C>0$ such that
\begin{equation}\label{Stokes-I-estimate}
|{\widetilde{\bf A}}^{\rm s}_1(\th)|\leq C\qquad\forall\,\th\in\left[\o_1,\o_2\right].
\end{equation}
Using \eqref{Stokes-gj-estimate}, \eqref{Stokes-I-estimate}, $\|r^{-\kappa_1}\|_{0,p,\G_\eps}\leq C\eps^{1/p-\kappa_1}$ and $\|\nab\w^{{\rm s}}\|_{0,q,\G_\eps}\leq C\|\w^{{\rm s}}\|_{2-1/q,\O}$ (see \eqref{A-eps-R-estimate1}), the quantity of \eqref{Stokes-w-dual-gamma} is estimated as follows:
\begin{align}
&\left|\,\int_{\Gamma_\epsilon}\left(\mu\w^{{\rm s}}\cdot\mu\frac{\partial{\widetilde\bPh}^{\rm s}_1}{\partial{\bf n}}-\left(\w^{{\rm s}}\cdot\nb\right)\mu{\widetilde\phi}^{\rm s}_1\right)dS\,\right|\nonumber\\
&\qquad\leq C\mu\eps^{1/p-\kappa_1}\|{\bf g}_J\|_{3/2-1/q,\G_J}+C\mu\int_{\omega_1}^{\omega_2}\epsilon^{-\kappa_1}\left|\nabla\w^{{\rm s}}\left(\epsilon\cos\theta,\epsilon\sin\theta\right)\right|\epsilon \, d\theta\nonumber\\
&\qquad=C\mu\eps^{1/p-\kappa_1}\|{\bf g}_J\|_{3/2-1/q,\G_J}+C\mu\int_{\Gamma_\epsilon}r^{-\kappa_1}\left|\nabla\w^{{\rm s}}\right|dS\nonumber\\
&\qquad\leq C\mu\eps^{1/p-\kappa_1}\|{\bf g}_J\|_{3/2-1/q,\G_J}+C\mu\|r^{-\kappa_1}\|_{0,p,\G_\eps}\|\nab\w^{{\rm s}}\|_{0,q,\G_\eps}\nonumber\\
&\qquad\leq C\mu\eps^{1/p-\kappa_1}\left(\|{\bf g}_J\|_{3/2-1/q,\G_J}+\|\w^{{\rm s}}\|_{2-1/q,\O}\right)\rightarrow 0\qquad\mbox{ as }\,\eps\rightarrow 0^+.\label{Stokes-w-dual-gamma-estimate2}
\end{align}
On the other hand, by $\w^{{\rm s}}\in{\bf H}^2(\Omega)$, we have $|\w^{{\rm s}}|_\infty<\infty$.
By the extended trace theorem and \eqref{Psi-s-estimate}, for $0<\d<\kappa_{1}-1/2$,
\begin{align}
\mu\|\nab\bPs^{\rm s}_1\|_{0,\G_\eps}+\|\ps^{\rm s}_1\|_{0,\G_\eps}&\leq C\left(\mu\|\bPs^{\rm s}_1\|_{1+\d,\partial\O_\eps}+\|\ps^{\rm s}_1\|_{\d,\partial\O_\eps}\right)\nbr\\
&\leq C\left(\mu\|\bPs^{\rm s}_1\|_{3/2+\d,\O_\eps}+\|\ps^{\rm s}_1\|_{1/2+\d,\O_\eps}\right)<\infty.\label{Stokes-ps-gamma-eps}
\end{align}
Hence,
\begin{align}
&\left|\,\int_{\G_\eps}\left(\m\w^{{\rm s}}\cdot\frac{\partial\bPs^{\rm s}_1}{\partial{\bf n}}-\left(\w^{{\rm s}}\cdot\nb\right)\ps^{\rm s}_1\right)dS\,\right|\nbr\\
&\qquad\leq C\|\w^{{\rm s}}\|_{0,\G_\eps}\left(\mu\|\nab\bPs^{\rm s}_1\|_{0,\G_\eps}+\|\ps^{\rm s}_1\|_{0,\G_\eps}\right)\nbr\\
&\qquad\leq C\eps^{1/2}|\w^{{\rm s}}|_{\infty}\left(\mu\|\bPs^{\rm s}_1\|_{3/2+\d,\O_\eps}+\|\ps^{\rm s}_1\|_{1/2+\d,\O_\eps}\right)\rightarrow 0\qquad\mbox{ as }\,\eps\rightarrow 0^+.\label{Stokes-w-dual-gamma-estimate4}
\end{align}
By \eqref{Stokes-w-dual-gamma-estimate2} and \eqref{Stokes-w-dual-gamma-estimate4}, we have
\begin{equation}\label{Stokes-B-w-lim}
\lim_{\eps\rightarrow 0^+}B_{\eps}^{{\rm R}}=0.
\end{equation}
Since $\|\bPh^{\rm s}_i\|_{0,\G_\eps}\leq C\mu^{-1}\eps^{1/2+\kappa_i}$, \eqref{Stokes-ps-gamma-eps} yields that
\begin{equation}\label{Stokes-B-eps-plus-estimate}
|B_\eps^+|\leq C\mu^{-1}\sum_{i=1}^{M}\eps^{1/2+\kappa_i}\left(\mu\|\bPs^{\rm s}_1\|_{3/2+\d,\O_\eps}+\|\ps^{\rm s}_1\|_{1/2+\d,\O_\eps}\right)\rightarrow 0\qquad\mbox{ as }\,\eps\rightarrow 0^+.
\end{equation}
Using the results \eqref{Stokes-B-w-lim} and \eqref{Stokes-B-eps-plus-estimate}, the limit of $B_\eps$ given by \eqref{Stokes-B-eps-eq} becomes
\begin{equation}\label{Stokes-B-eps-result}
\begin{aligned}
\lim_{\eps\rightarrow 0^+}B_{\eps}&=\sum_{j=1}^J\int_{\G_j}\left(\m{\bf g}_j\cdot\frac{\partial\v^{\rm s}_1}{\partial\nb_j}-\left({\bf g}_j\cdot\nb_j\right)q^{\rm s}_1\right)dS+\lim_{\eps\rightarrow 0^+}B_\eps^-.
\end{aligned}
\end{equation}
Combining \eqref{Stokes-A-eps-result} and \eqref{Stokes-B-eps-result} with \eqref{f-A-B-equality}, we have
\begin{equation}\label{Stokes-c1-limit}
\int_\O\left(\f\cdot\v^{\rm s}_1-\zeta q^{\rm s}_1\right)d\x=\,\sum_{j=1}^J\int_{\G_j}\left(\mu{\bf g}_j\cdot\frac{\partial\v^{\rm s}_1}{\partial\nb_j}-\left({\bf g}_j\cdot\nb_j\right)q^{\rm s}_1\right)dS +\lim_{\eps\rightarrow 0^+}(A_\eps^- +B_\eps^-).
\end{equation}
By \eqref{Stokes-Dual-property}, the limit of $A_\eps^-+B_\eps^-$ in \eqref{Stokes-c1-limit} is written as
\begin{align}
&\lim_{\eps\rightarrow 0^+}(A_\eps^- +B_\eps^-)\nbr\\
&\qquad=\lim_{\eps\rightarrow 0}\sum_{i=1}^{M}\epsilon^{\kappa_i-\kappa_1}c^{\rm s}_i\int_{\omega_1}^{\omega_2}\Big((\kappa_i+\kappa_1)\,\bT^{\rm s}_i(\theta)\cdot{\widetilde\bT}^{\rm s}_1(\theta)\nbr\\
&\qquad\qquad\qquad\qquad\qquad\qquad\quad-\xi_i^{\rm s}(\th)\,[{\widetilde\bT}^{\rm s}_1(\th)\cdot\eb_r(\th)]+[\bT^{\rm s}_i(\th)\cdot\eb_r(\th)]\,{\widetilde\xi}^{\rm s}_1(\th)\Big)\,d\th\nbr\\
&\qquad=c^{\rm s}_1\g^{\rm s}_1,\label{Stokes-c1-gamma1-equality}
\end{align}
where $\g^{\rm s}_1\in\mathbb{R}$ is defined by \eqref{Stokes-gamma}.
Applying \eqref{Stokes-c1-gamma1-equality} to \eqref{Stokes-c1-limit}, the desired formula \eqref{Stokes-sif-formula} of $c^{\rm s}_1$ can be derived.

\vspace{0.1cm}

Next, we derive the formula of $c^{\rm s}_2$ in \eqref{Stokes-decomposition}.
Inserting the decomposition \eqref{Stokes-decomposition} to the problem \eqref{Stokes}, we consider the following problem of $[\u^{\rm s}_1,p^{\rm s}_1]:=[\w^{{\rm s}},\sigma^{{\rm s}}]+c^{\rm s}_2[\bPh^{\rm s}_2,\ph^{\rm s}_2]$:
\begin{equation}\label{Stokes-u1}
\left\{\begin{aligned}
-\mu\Delta\u^{\rm s}_1+\nabla p^{\rm s}_1&=\f&&\mbox{ in }\O,\\
\div\,\u^{\rm s}_1&=\zeta&&\mbox{ in }\O,\\
\u^{\rm s}_1&={\bf g}_j&&\mbox{ on }\G_j,\quad j=1,J,\\
\u^{\rm s}_1&={\bf g}_j-c^{\rm s}_1\bPh^{\rm s}_1|_{\G_j}&&\mbox{ on }\G_j,\quad j=2,\ldots,J-1.
\end{aligned}\right.
\end{equation}
Upon integrating by parts, the problem \eqref{Stokes-u1} gives
$$
\begin{aligned}
\int_\O\left(\f\cdot\v^{\rm s}_2-\zeta q^{\rm s}_2\right)d\x&=\lim_{\eps\rightarrow 0^+}\int_{\O_\eps}\left(\left(-\mu\Delta\u^{\rm s}_1+\nab p^{\rm s}_1\right)\cdot\v^{\rm s}_2-\div\,\u^{\rm s}_1 q^{\rm s}_2\right)d\x\\
&=\lim_{\eps\rightarrow 0^+}(C_\eps+D_\eps),
\end{aligned}
$$
where
$$
\begin{aligned}
C_\eps&:=\int_{\partial\O_\eps}\left(-\m\frac{\partial\u^{\rm s}_1}{\partial{\bf n}}\cdot\v^{\rm s}_2+p^{\rm s}_1(\v^{\rm s}_2\cdot{\bf n})\right)dS,\\
D_\eps&:=\int_{\partial\O_\eps}\left(\m\u^{\rm s}_1\cdot\frac{\partial\v^{\rm s}_2}{\partial{\bf n}}-\left(\u^{\rm s}_1\cdot\nb\right)q^{\rm s}_2\right)dS.
\end{aligned}
$$
As in the derivation of \eqref{Stokes-c1-limit}, obtained by \eqref{Stokes-A-eps-result} and \eqref{Stokes-B-eps-result}, we have
\begin{equation}\label{Stokes-C2-eps}
\begin{aligned}
\int_\O\left(\f\cdot\v^{\rm s}_2-\zeta q^{\rm s}_2\right)d\x=&\,\sum_{j=1}^J\int_{\G_j}\left(\mu{\bf g}_j\cdot\frac{\partial\v^{\rm s}_2}{\partial\nb_j}-\left({\bf g}_j\cdot\nb_j\right)q^{\rm s}_2\right)dS\\
&\,-c^{\rm s}_1\sum_{j=2}^{J-1}\int_{\G_j}\left(\mu{\bPh}^{\rm s}_1\cdot\frac{\partial\v^{\rm s}_2}{\partial\nb_j}-\left({\bPh}^{\rm s}_1\cdot\nb_j\right)q^{\rm s}_2\right)dS\\
&\,+\lim_{\eps\rightarrow 0^+}(C_\eps^-+D_\eps^-),
\end{aligned}
\end{equation}
where
$$
\begin{aligned}
C_{\eps}^-&:=c^{\rm s}_2\int_{\G_\eps}\bigg(-\m\frac{\partial\bPh^{\rm s}_2}{\partial{\bf n}}\cdot\mu{\widetilde\bPh}^{\rm s}_2 +\phi^{\rm s}_2\left(\mu{\widetilde\bPh}^{\rm s}_2\cdot\nb\right)\bigg)dS,\\
D_{\eps}^-&:=c^{\rm s}_2\int_{\G_\eps}\bigg(\m\bPh^{\rm s}_2\cdot\mu\frac{\partial{\widetilde\bPh}^{\rm s}_2}{\partial\nb}-\left(\bPh^{\rm s}_2\cdot\nb\right)\mu{\widetilde\ph}^{\rm s}_2\bigg)dS.
\end{aligned}
$$
In a similar manner to \eqref{Stokes-c1-gamma1-equality}, we have $\lim_{\eps\rightarrow 0^+}\left(C_\eps^-+D_\eps^-\right)=c^{\rm s}_2\gamma^{\rm s}_2$, and thus \eqref{Stokes-C2-eps} becomes
$$
\mathcal{C}^{\rm s}_2(\f,\zeta,{\bf g})+c^{\rm s}_1\mathcal{C}^{\rm s}_*=c^{\rm s}_2\g^{\rm s}_2,
$$
which is the desired formula \eqref{Stokes-sif-formula} of $c^{\rm s}_2$.
\end{proof}

\vspace{0.1cm}

We next verify the a priori estimates of the coefficients $c^{\rm s}_i$.

\begin{thm}\label{thm4.3}
Assume that $\f\in{\bf L}^2(\Omega)$, $\zeta\in{\rm H}^1(\Omega)$ and ${\bf g}_j\in{\bf H}^{3/2}(\G_j)$ for $1\leq j\leq J$ with \eqref{g-condition} and ${\bf g}_1(S_1)={\bf g}_J(S_1)={\bf 0}$. Then there exists a constant $C>0$ independent of the viscous number $\m$ such that the coefficients $c^{\rm s}_i$, formulated by \eqref{Stokes-sif-formula}, have the following a priori estimates: for $1\leq i\leq M,$
\begin{equation}\label{sif-stokes-estimate}
|c^{\rm s}_i|\leq C\Bigg(\|\f\|_{0,\Omega}+\mu\|\zeta\|_{1,\Omega}+\mu\sum_{j=1}^J\|{\bf g}_j\|_{3/2,\G_j}\Bigg).
\end{equation}
\end{thm}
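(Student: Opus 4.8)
The plan is to mirror the proof of Theorem \ref{thm3.3}: bound $\mathcal{C}^{\rm s}_i(\f,\zeta,{\bf g})$ and $\mathcal{C}^{\rm s}_*$ in \eqref{Stokes-C}, and then read off the estimate for $c^{\rm s}_i$ from \eqref{Stokes-sif-formula}. The factor $|\gamma^{\rm s}_i|^{-1}$ is harmless: since $\gamma^{\rm s}_i$ in \eqref{Stokes-gamma} is an integral over $(\omega_1,\omega_2)$ of the $\mu$-independent trigonometric eigenfunctions $\bT^{\rm s}_i,{\widetilde\bT}^{\rm s}_i,\xi^{\rm s}_i,{\widetilde\xi}^{\rm s}_i$ and is nonzero (a fact already used in stating \eqref{Stokes-sif-formula}), one has $|\gamma^{\rm s}_i|^{-1}\le C$ with $C$ depending only on $\omega$. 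Before the main estimates I would record the size bounds used repeatedly: as in \eqref{dual_sing_l2}, $\kappa_i<1$ gives $\|\mu{\widetilde\bPh}^{\rm s}_i\|_{0,\Omega}=\|r^{-\kappa_i}{\widetilde\bT}^{\rm s}_i\|_{0,\Omega}\le C$; the pressure dual $\mu{\widetilde\phi}^{\rm s}_i\sim\mu\,r^{-\kappa_i-1}$ lies in ${\rm L}^q(\Omega)$ with $\|\mu{\widetilde\phi}^{\rm s}_i\|_{0,q,\Omega}\le C\mu$ for every $q\in(1,2/(\kappa_i+1))$, a nonempty range precisely because $\kappa_i+1<2$; and from \eqref{Psi-s-estimate} together with $\|{\widetilde\bPh}^{\rm s}_i\|_{l-1/2,\G_j}\le C\mu^{-1}$ (for $3/2<l<1+\kappa_1$) one obtains $\|\bPs^{\rm s}_i\|_{l,\Omega}\le C$ and $\|\psi^{\rm s}_i\|_{l-1,\Omega}\le C\mu$, whence by the trace theorem $\|\nabla\bPs^{\rm s}_i\|_{0,\G_j}\le C$ and $\|\psi^{\rm s}_i\|_{0,\G_j}\le C\mu$.

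Next I would split $\mathcal{C}^{\rm s}_i=F^{\rm s}_i-\sum_{j=1}^J(G^{{\rm s},-}_{i,j}+G^{{\rm s},+}_{i,j})$, where $F^{\rm s}_i:=\int_\Omega(\f\cdot(\mu{\widetilde\bPh}^{\rm s}_i+\bPs^{\rm s}_i)-\zeta(\mu{\widetilde\phi}^{\rm s}_i+\psi^{\rm s}_i))\,d\x$, while $G^{{\rm s},-}_{i,j}$ collects the ${\widetilde\bPh}^{\rm s}_i,{\widetilde\phi}^{\rm s}_i$ terms on $\G_j$ and $G^{{\rm s},+}_{i,j}$ the $\bPs^{\rm s}_i,\psi^{\rm s}_i$ ones. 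For $F^{\rm s}_i$, Cauchy--Schwarz and the bounds above handle the $\f$-part by $C\|\f\|_{0,\Omega}$; for the $\zeta$-part, I would pair $\mu{\widetilde\phi}^{\rm s}_i\in{\rm L}^q(\Omega)$ with $\zeta\in{\rm L}^{q'}(\Omega)$ using the planar Sobolev embedding ${\rm H}^1(\Omega)\hookrightarrow{\rm L}^{q'}(\Omega)$ (valid for all finite $q'$), and pair $\psi^{\rm s}_i$ with $\zeta$ in ${\rm L}^2(\Omega)$, both giving $C\mu\|\zeta\|_{1,\Omega}$. For $G^{{\rm s},-}_{i,j}$ on the two sides $\G_1,\G_J$ touching $S_1$, the integrands behave like $\mu\,|{\bf g}_j|\,r^{-\kappa_i-1}$; using ${\bf g}_1(S_1)={\bf g}_J(S_1)={\bf 0}$, writing ${\bf g}_j$ as a radial integral of its derivative and applying H\"older together with Hardy's inequality exactly as for $G^-_{i,1}$ in the proof of Theorem \ref{thm3.3} (with a finite exponent $p>1/(1-\kappa_i)$ and $q=p'$) yields $|G^{{\rm s},-}_{i,j}|\le C\mu\|{\bf g}_j\|_{3/2,\G_j}$; on the remaining sides $r$ is bounded below and the same bound is immediate. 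For $G^{{\rm s},+}_{i,j}$, the trace estimates above give, as in \eqref{G_plus_ij}, $|G^{{\rm s},+}_{i,j}|\le C(\mu\|\nabla\bPs^{\rm s}_i\|_{0,\G_j}+\|\psi^{\rm s}_i\|_{0,\G_j})\|{\bf g}_j\|_{0,\G_j}\le C\mu\|{\bf g}_j\|_{3/2,\G_j}$. Summing, $|\mathcal{C}^{\rm s}_i(\f,\zeta,{\bf g})|\le C(\|\f\|_{0,\Omega}+\mu\|\zeta\|_{1,\Omega}+\mu\sum_{j=1}^J\|{\bf g}_j\|_{3/2,\G_j})$.

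For $\mathcal{C}^{\rm s}_*$, all integrals run over the sides $\G_2,\ldots,\G_{J-1}$, which stay away from $S_1$; there $\bPh^{\rm s}_1\sim\mu^{-1}$ while every factor paired with it carries a compensating power of $\mu$ (the explicit $\mu$ in the integrand for the $\mu{\widetilde\bPh}^{\rm s}_2$ and $\bPs^{\rm s}_2$ terms, the $\mu$ inside $\mu{\widetilde\phi}^{\rm s}_2$, or the $\mu$-controlled trace of $\psi^{\rm s}_2$), so each term is $O(1)$ and $|\mathcal{C}^{\rm s}_*|\le C$ independent of $\mu$, as in \eqref{Lame-Cstar-estimate}. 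Inserting these bounds into \eqref{Stokes-sif-formula} and using $|\gamma^{\rm s}_i|^{-1}\le C$: the bound $|c^{\rm s}_1|\le|\gamma^{\rm s}_1|^{-1}|\mathcal{C}^{\rm s}_1(\f,\zeta,{\bf g})|$ gives \eqref{sif-stokes-estimate} for $i=1$, and then $|c^{\rm s}_2|\le|\gamma^{\rm s}_2|^{-1}(|\mathcal{C}^{\rm s}_2(\f,\zeta,{\bf g})|+|c^{\rm s}_1|\,|\mathcal{C}^{\rm s}_*|)$ gives it for $i=2$.

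The \emph{main difficulty}, absent in the Lam\'e case of Theorem \ref{thm3.3}, is the pressure dual singular function $\mu{\widetilde\phi}^{\rm s}_i\sim\mu\,r^{-\kappa_i-1}$, which fails to be square-integrable near $S_1$; controlling its pairing with $\zeta$ and with ${\bf g}_1,{\bf g}_J$ forces the use of, respectively, the ${\rm L}^q$--${\rm L}^{q'}$ duality with the two-dimensional Sobolev embedding and the Hardy-type radial estimate, and one must verify that the admissible exponent windows are nonempty --- which is exactly where $\kappa_i<1$ enters. A subsidiary, purely bookkeeping issue is to keep precise track of which quantities carry powers of $\mu$ (notably $\|\bPs^{\rm s}_i\|_{l,\Omega}\le C$ but $\|\psi^{\rm s}_i\|_{l-1,\Omega}\le C\mu$, and $\|\xi^{\rm s}_i\|_{l,\Omega}\le C$) so that the constant in \eqref{sif-stokes-estimate} is genuinely independent of the viscous number $\mu$.
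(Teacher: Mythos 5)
Your proposal is correct and mirrors the paper's proof very closely: the same decomposition $\C^{\rm s}_i=F_i-\sum_j(G_{i,j}^-+G_{i,j}^+)$, the same Hardy/H\"older treatment of $G^-_{i,j}$ on the two corner sides using ${\bf g}_1(S_1)={\bf g}_J(S_1)={\bf 0}$, the same trace estimate for $G^+_{i,j}$ via \eqref{Stokes-psi-estimate}, the same bound for $\C^{\rm s}_*$ over sides away from $S_1$, and the same final assembly via $|\g^{\rm s}_i|^{-1}\le C$. The one place you genuinely deviate is the pairing $\int_\Omega\zeta\,\mu{\widetilde\phi}^{\rm s}_i\,d\x$: you bound $\mu{\widetilde\phi}^{\rm s}_i$ in ${\rm L}^q(\Omega)$ for $q<2/(\kappa_i+1)$ and pair against $\zeta\in{\rm L}^{q'}(\Omega)$ via the 2D Sobolev embedding, whereas the paper bounds $\|{\widetilde\phi}^{\rm s}_i\|_{-1,\Omega}$ by a weighted $r$–$r^{-1}$ duality (Grisvard's Hardy-type inequality) and then pairs in the ${\rm H}^1$–${\rm H}^{-1}$ sense. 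Both are valid here and give the same $C\mu\|\zeta\|_{1,\Omega}$; your route is a shade more self-contained, since it uses only H\"older and the planar Sobolev embedding and does not require any vanishing of $\zeta$ at the corner for the duality pairing to make sense, while the paper's ${\rm H}^{-1}$ pairing implicitly leans on that Hardy-type weighted estimate.
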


\begin{proof}
The quantity $\C^{\rm s}_i(\f,\zeta,{\bf g})$ in \eqref{Stokes-C} is rewritten as
$$
\C^{\rm s}_i(\f,\zeta,{\bf g})=F_i-\sum_{j=1}^J(G_{i,j}^-+G_{i,j}^+),
$$
where
$$
\begin{aligned}
F_i&:=\int_\O\left(\f\cdot\left(\mu{\widetilde\bPh}^{\rm s}_i+\bPs^{\rm s}_i\right)-\zeta\left(\mu{\widetilde\ph}^{\rm s}_i+\ps^{\rm s}_i\right)\right)d\x,\\
G_{i,j}^-&:=\int_{\G_j}\bigg(\m\,{\bf g}_j\cdot\mu\frac{\partial{\widetilde\bPh}^{\rm s}_i}{\partial{\bf n}_j}-\left({\bf g}_j\cdot{\bf n}_j\right)\mu{\widetilde\ph}^{\rm s}_i\bigg)dS,\\
G_{i,j}^+&:=\int_{\G_j}\bigg(\m\,{\bf g}_j\cdot\frac{\partial\bPs^{\rm s}_i}{\partial{\bf n}_j}-\left({\bf g}_j\cdot{\bf n}_j\right)\ps^{\rm s}_i\bigg)dS.
\end{aligned}
$$
Reflecting the form of ${\widetilde\bPh}^{\rm s}_i$ and ${\widetilde\phi}^{\rm s}_i$ given by \eqref{Stokes-Dual} and \cite[Theorem 1.4.4.4]{GE}, we observe that
\begin{equation}\label{Stokes-dual_sing_l2}
\begin{aligned}
\|{\widetilde\bPh}^{\rm s}_i\|_{0,\O}&\leq C\mu^{-1}\left(\int_0^{r_*}r^{-2\kappa_i+1}dr\right)^{1/2}\leq C\mu^{-1},\\
\|{\widetilde\phi}^{\rm s}_i\|_{-1,\O}&=\sup_{0\neq v\in{\rm H}_0^1(\Omega)}\frac{\langle{\widetilde\phi}^{\rm s}_i,v\rangle}{\|v\|_{1,\Omega}}\leq \sup_{0\neq v\in{\rm H}_0^1(\Omega)}\frac{\|r{\widetilde\phi}^{\rm s}_i\|_{0,\Omega}\|r^{-1}v\|_{0,\Omega}}{\|v\|_{1,\Omega}}\\
&\leq C\|r{\widetilde\phi}^{\rm s}_i\|_{0,\Omega}\leq C\left(\int_0^{r_*}r^{-2\kappa_i+1}dr\right)^{1/2}\leq C,
\end{aligned}
\end{equation}
where $r_*:=\mbox{diam}(S_1,\O)$. As seen in \eqref{Psi-s-estimate}, we recall that
\begin{equation}\label{Stokes-psi-estimate}
\mu\|\bPs^{\rm s}_i\|_{l,\O}+\|\ps^{\rm s}_i\|_{l-1,\Omega}\leq C\mu\quad\mbox{ for }\,3/2<l<1+\kappa_1.
\end{equation}
By \eqref{Stokes-dual_sing_l2} and \eqref{Stokes-psi-estimate},
\begin{align}
|F_i|&\leq C\|\f\|_{0,\O}\left(\mu\|{\widetilde\bPh}^{\rm s}_i\|_{0,\O}+\|\bPs^{\rm s}_i\|_{0,\O}\right)+C\|\zeta\|_{1,\O}\left(\mu\|{\widetilde\ph}^{\rm s}_i\|_{-1,\O}+\|\ps^{\rm s}_i\|_{-1,\O}\right)\nbr\\
&\leq C\left(\|\f\|_{0,\O}+\mu\|\zeta\|_{1,\O}\right).\label{Fi-stokes-ineq}
\end{align}
Since $\mu|\pd{\widetilde\bPh}^{\rm s}_i/\pd\nb_j|+|{\widetilde\ph}^{\rm s}_i|\leq Cr^{-\kappa_i-1}$ and ${\bf g}_1(S_1)={\bf 0}$, we obtain
\begin{align}
|G_{i,1}^-|&\leq C\mu\int_0^{r_*}|{\bf g}_1(r\cos\omega_1,r\sin\omega_2)|r^{-\kappa_i-1}\,dr\nbr\\
&=C\mu\int_0^{r_*}\left|\frac{1}{r}\int_0^r \frac{d}{dr}{\bf g}_1(r_1\cos\o_1,r_1\sin\o_1)\,dr_1\right| r^{-\kappa_i}\,dr\nbr\\
&\leq C\mu\left(\int_0^{r_*}\left|\frac{1}{r}\int_0^r\frac{d}{dr}{\bf g}_1(r_1\cos\o_1,r_1\sin\o_1)\,dr_1\right|^p dr\right)^{1/p}\left(\int_0^{r_*}r^{-\kappa_i q}\,dr\right)^{1/q}\nbr\\
&\leq C\mu\left(\int_0^{r_*}\left|\frac{d}{dr}{\bf g}_1(r\cos\o_1,r\sin\o_1)\right|^p dr\right)^{1/p}\qquad\mbox{(by Hardy's inequality)}\nbr\\
&\leq C\mu\|{\bf g}_1\|_{1,p,\G_1}\nbr \leq C\mu\|{\bf g}_1\|_{3/2,\G_1},\label{Gi1-stokes-ineq}
\end{align}
where $p=1/\d$ and $q=(1-\d)^{-1}$ for $0<\d<1-\kappa_i$.
Similarly,
\begin{equation}\label{GiJ-stokes-ineq}
|G_{i,J}^-|\leq C\mu\|{\bf g}_J\|_{3/2,\G_J}.
\end{equation}
Moreover, note that for $j=2,3,\ldots,J-1$,
\begin{align}
|G_{i,j}^-|&=\left|\int_{\G_j}\bigg(\m{\bf g}_j\cdot\mu\frac{\pd{\widetilde\bPh}^{\rm s}_i}{\pd\nb_j}-\big({\bf g}_j\cdot\nb_j\big)\mu{\widetilde\phi}^{\rm s}_i\bigg)dS\right|\nbr\\
&\leq C\mu\int_{\G_j}|{\bf g}_j|r^{-\kappa_i-1}\,dS\nbr \leq C\mu\|{\bf g}_j\|_{0,\G_j}\|r^{-\kappa_i-1}\|_{0,\G_j}\nbr\\
&\leq C\mu\|{\bf g}_j\|_{3/2,\G_j}.\label{Gij-minus-stokes-ineq}
\end{align}
By the Cauchy-Schwarz inequality and \eqref{Stokes-psi-estimate},
\begin{align}
|G_{i,j}^+|&\leq C\|{\bf g}_j\|_{0,\G_j}\left(\m\|\bPs^{\rm s}_i\|_{1,\G_j}+\|\psi^{\rm s}_i\|_{0,\G_j}\right)\nbr\\
&\leq C\|{\bf g}_j\|_{0,\G_j}\left(\m\|\bPs^{\rm s}_i\|_{l,\O}+\|\psi^{\rm s}_i\|_{l-1,\O}\right)\nbr\\
&\leq C\mu\|{\bf g}_j\|_{3/2,\G_j},\label{Gij-plus-stokes-ineq}
\end{align}
where $3/2<l<1+\kappa_1$.
Using \eqref{Fi-stokes-ineq}--\eqref{Gij-plus-stokes-ineq}, we obtain
\begin{align}
\left|\C^{\rm s}_i(\f,\zeta,{\bf g})\right|&\leq |F_i|+\sum_{j=1}^J(|G_{i,j}^-|+|G_{i,j}^+|)\nbr\\
&\leq C\left(\|\f\|_{0,\O}+\mu\|\zeta\|_{1,\O}\right)+C\mu\sum_{j=1}^J\|{\bf g}_j\|_{3/2,\G_j}.\label{Ci-stokes-estimate}
\end{align}

Next, we consider the estimate of $\C^{\rm s}_*$ in \eqref{Stokes-sif-formula}.
Since $|\bPh^{\rm s}_1|\leq C\m^{-1}r^{\kappa_1}$ and $\m|\pd{\widetilde\bPh}^{\rm s}_2/\pd\nb_j|+|{\widetilde\ph}^{\rm s}_2|\leq Cr^{-\kappa_2-1}$, it follows from the extended trace theorem and \eqref{Stokes-psi-estimate} that
\begin{align}
|\C^{\rm s}_*|&=\left|\,\sum_{j=2}^{J-1}\int_{\G_j}\left(\m\bPh^{\rm s}_1\cdot\frac{\pd}{\pd\nb_j}(\mu{\widetilde\bPh}^{\rm s}_2+\bPs^{\rm s}_2)-\left(\bPh^{\rm s}_1\cdot\nb_j\right)(\mu{\widetilde\ph}^{\rm s}_2+\ps^{\rm s}_2)\right)dS \,\right|\nbr\\
&\leq C\sum_{j=2}^{J-1}\int_{\G_j}r^{\kappa_1-\kappa_2-1}dS+C\sum_{j=2}^{J-1}\|\bPh^{\rm s}_1\|_{0,\G_j}\left(\m\|\bPs^{\rm s}_2\|_{1,\G_j}+\|\ps^{\rm s}_2\|_{0,\G_j}\right)\nbr\\
&\leq C+C\m^{-1}\left(\m\|\bPs^{\rm s}_2\|_{l,\O}+\|\ps^{\rm s}_2\|_{l-1,\O}\right) \leq C.\label{Cstar-stokes-estimate}
\end{align}
Since $|\gamma^{\rm s}_i|^{-1}\leq C$, by \eqref{Ci-stokes-estimate}, we have
$$
\begin{aligned}
|c^{\rm s}_1|&=|\g^{\rm s}_1|^{-1}|\C^{\rm s}_1(\f,\zeta,{\bf g})|\\
&\leq C\bigg(\|\f\|_{0,\O}+\mu\|\zeta\|_{1,\O}+\mu\sum_{j=1}^J\|{\bf g}_j\|_{3/2,\G_j}\bigg),
\end{aligned}
$$
and furthermore, using \eqref{Cstar-stokes-estimate},
$$
\begin{aligned}
|c^{\rm s}_2|&\leq|\g^{\rm s}_2|^{-1}(|\C^{\rm s}_2(\f,\zeta,{\bf g})|+|c^{\rm s}_1||\C^{\rm s}_*|)\\
&\leq C\bigg(\|\f\|_{0,\O}+\mu\|\zeta\|_{1,\O}+\mu\sum_{j=1}^J\|{\bf g}_j\|_{3/2,\G_j}\bigg),
\end{aligned}
$$
yielding the desired estimate \eqref{sif-stokes-estimate}.
\end{proof}

\vspace{0.7cm}

\section{Convergence in the incompressible limit of\\the penalized Lam\'{e} system: Theorem \ref{thm1.1}}\label{sec5}

\vspace{0.2cm}

In this section, we finally show the convergence result: Theorem \ref{thm1.1} regarding the penalized system obtained by eliminating the pressure in the following Stokes problem with inhomogeneous boundary condition:
\begin{equation}\label{incomp_stokes}
\left\{\begin{aligned}
-\mu\Delta\u+\nabla p&=\f&&\mbox{ in }\O,\\
\div\,\u&=0&&\mbox{ in }\O,\\
\u&={\bf g}&&\mbox{ on }\G,
\end{aligned}\right.
\end{equation}
where $\f\in{\bf L}^2(\O)$ and ${\bf g}\in{\bf H}^{3/2}(\G)$ are given external functions with $\int_\Gamma\left({\bf g}\cdot{\bf n}\right)dS=0$.

\vspace{0.5cm}

\noindent\underline{\bf Proof of \eqref{sif_eps_convergence} in Theorem \ref{thm1.1}.}

\vspace{0.1cm}

First of all, we consider the following convergence result of the quantity $\g_i^\eps$ given in \eqref{gamma_eps}:
\begin{equation}\label{limit-gamma-eps}
\lim_{\eps\rightarrow 0^+}\g_i^\eps=\mu\g_i^{\rm s},
\end{equation}
where $\g_i^{\rm s}$ is defined by \eqref{Stokes-gamma}.
Set $C_\mu^\eps=1+2\mu\eps$.
By a direct calculation, we obtain
\begin{align}
&2\mu\l_1^\eps\,\bT_1^\eps(\th)\cdot{\widetilde\bT}_1^{\eps}(\th)\label{gamma1-1}\\
&=2\mu\l_1^\eps\Big[\left\{-(C_\mu^\eps-\l_1^\eps)\cos[(1+\l_1^\eps)(\th-\ol\o)]+(C_\mu^\eps\cos(\l_1^\eps\o)-\l_1^\eps\cos\o)\cos[(1-\l_1^\eps)(\th-\ol\o)]\right\}\nbr\\
&\quad\qquad\quad\times\left\{-(C_\mu^\eps+\l_1^\eps)\cos[(1-\l_1^\eps)(\th-\ol\o)]+(C_\mu^\eps\cos(\l_1^\eps\o)+\l_1^\eps\cos\o)\cos[(1+\l_1^\eps)(\th-\ol\o)]\right\}\nbr\\
&\quad\qquad\quad+\left\{-(C_\mu^\eps+\l_1^\eps)\sin[(1+\l_1^\eps)(\th-\ol\o)]+(C_\mu^\eps\cos(\l_1^\eps\o)-\l_1^\eps\cos\o)\sin[(1-\l_1^\eps)(\th-\ol\o)]\right\}\nbr\\
&\quad\qquad\quad\times\left\{-(C_\mu^\eps-\l_1^\eps)\sin[(1-\l_1^\eps)(\th-\ol\o)]+(C_\mu^\eps\cos(\l_1^\eps\o)+\l_1^\eps\cos\o)\sin[(1+\l_1^\eps)(\th-\ol\o)]\right\}\Big]\nbr
\end{align}
and
\begin{align}
\eps^{-1}\left(I_{1}^\eps(\th)+J_{1}^\eps(\th)\right)
&=4\mu\l_1^\eps\Big[\,2C_\mu^\eps\sin[(1-\l_1^\eps)(\th-\ol\o)]\sin[(1+\l_1^\eps)(\th-\ol\o)]\label{gamma1-2}\\
&\qquad\quad\;\;\;-(C_\mu^\eps\cos(\l_1^\eps\o)-\l_1^\eps\cos\o)\sin^2[(1-\l_1^\eps)(\th-\ol\o)]\nbr\\
&\qquad\quad\;\;\;-(C_\mu^\eps\cos(\l_1^\eps\o)+\l_1^\eps\cos\o)\sin^2[(1+\l_1^\eps)(\th-\ol\o)]\,\Big].\nbr
\end{align}
Using \eqref{gamma1-1} and \eqref{gamma1-2}, since $\lim_{\eps\rightarrow 0^+}C_\mu^\eps=1$ and $\lim_{\eps\rightarrow 0^+}\l_1^\eps=\kappa_1$, the limit of $\g_1^\eps$ becomes
$$
\begin{aligned}
&\lim_{\eps\rightarrow 0^+}\g_1^\eps=\int_{\o_1}^{\o_2}\lim_{\eps\rightarrow 0^+}\Big(2\mu\l_1^\eps\,\bT_1^\eps(\th)\cdot{\widetilde\bT}_1^{\eps}(\th)+\eps^{-1}\left(I_{1}^\eps(\th)+J_{1}^\eps(\th)\right)\Big)d\th\\
&=\int_{\o_1}^{\o_2}2\mu\kappa_1\Big[\left\{-(1-\kappa_1)\cos[(1+\kappa_1)(\th-\ol\o)]+(\cos(\kappa_1\o)-\kappa_1\cos\o)\cos[(1-\kappa_1)(\th-\ol\o)]\right\}\\
&\quad\qquad\qquad\quad\times\left\{-(1+\kappa_1)\cos[(1-\kappa_1)(\th-\ol\o)]+(\cos(\kappa_1\o)+\kappa_1\cos\o)\cos[(1+\kappa_1)(\th-\ol\o)]\right\}\\
&\quad\qquad\qquad\quad+\left\{-(1+\kappa_1)\sin[(1+\kappa_1)(\th-\ol\o)]+(\cos(\kappa_1\o)-\kappa_1\cos\o)\sin[(1-\kappa_1)(\th-\ol\o)]\right\}\\
&\quad\qquad\qquad\quad\times\left\{-(1-\kappa_1)\sin[(1-\kappa_1)(\th-\ol\o)]+(\cos(\kappa_1\o)+\kappa_1\cos\o)\sin[(1+\kappa_1)(\th-\ol\o)]\right\}\Big]\\
&\quad\qquad+4\mu\kappa_1\Big[\,2\sin[(1-\kappa_1)(\th-\ol\o)]\sin[(1+\kappa_1)(\th-\ol\o)]\\
&\quad\qquad\qquad\quad\;\;\,-(\cos(\kappa_1\o)-\kappa_1\cos\o)\sin^2[(1-\kappa_1)(\th-\ol\o)]\\
&\quad\qquad\qquad\quad\;\;\,-(\cos(\kappa_1\o)+\kappa_1\cos\o)\sin^2[(1+\kappa_1)(\th-\ol\o)]\,\Big]\,d\th\\
&=\mu\gamma_1^{\rm s}.
\end{aligned}
$$
Similarly,
\begin{align}
&2\mu\l_2^\eps\,\bT_2^\eps(\th)\cdot{\widetilde\bT}_2^{\eps}(\th)\label{gamma2-1}\\
&=2\mu\l_2^\eps\Big[\left\{-(C_\mu^\eps-\l_2^\eps)\cos[(1-\l_2^\eps)(\th-\ol\o)]+(C_\mu^\eps\cos(\l_2^\eps\o)-\l_2^\eps\cos\o)\cos[(1+\l_2^\eps)(\th-\ol\o)]\right\}\nbr\\
&\quad\qquad\;\;\;\,\times\left\{-(C_\mu^\eps+\l_2^\eps)\cos[(1+\l_2^\eps)(\th-\ol\o)]+(C_\mu^\eps\cos(\l_2^\eps\o)+\l_2^\eps\cos\o)\cos[(1-\l_2^\eps)(\th-\ol\o)]\right\}\nbr\\
&\quad\qquad\;\;\;\,+\left\{(C_\mu^\eps-\l_2^\eps)\sin[(1+\l_2^\eps)(\th-\ol\o)]-(C_\mu^\eps\cos(\l_2^\eps\o)+\l_2^\eps\cos\o)\sin[(1-\l_2^\eps)(\th-\ol\o)]\right\}\nbr\\
&\quad\qquad\;\;\;\,\times\left\{(C_\mu^\eps+\l_2^\eps)\sin[(1-\l_2^\eps)(\th-\ol\o)]-(C_\mu^\eps\cos(\l_2^\eps\o)-\l_2^\eps\cos\o)\sin[(1+\l_2^\eps)(\th-\ol\o)]\right\}\Big],\nbr
\end{align}
and
\begin{align}
\eps^{-1}\left(I_{2}^\eps(\th)+J_{2}^\eps(\th)\right)
&=4\mu\l_2^\eps\Big[\,2C_\mu^\eps\cos[(1-\l_2^\eps)(\th-\ol\o)]\cos[(1+\l_2^\eps)(\th-\ol\o)]\label{gamma2-2}\\
&\qquad\quad\;\;\;-(C_\mu^\eps\cos(\l_2^\eps\o)-\l_2^\eps\cos\o)\cos^2[(1+\l_2^\eps)(\th-\ol\o)]\nbr\\
&\qquad\quad\;\;\;-(C_\mu^\eps\cos(\l_2^\eps\o)+\l_2^\eps\cos\o)\cos^2[(1-\l_2^\eps)(\th-\ol\o)]\,\Big].\nbr
\end{align}
By \eqref{gamma2-1} and \eqref{gamma2-2}, since $\lim_{\eps\rightarrow 0^+}C_\mu^\eps=1$ and $\lim_{\eps\rightarrow 0^+}\l_2^\eps=\kappa_2$, we have
$$
\begin{aligned}
&\lim_{\eps\rightarrow 0^+}\g_2^\eps =\int_{\o_1}^{\o_2}\lim_{\eps\rightarrow 0^+}\Big(2\mu\l_2^\eps\,\bT_2^\eps(\th)\cdot{\widetilde\bT}_2^{\eps}(\th)+\eps^{-1}\left(I_{2}^\eps(\th)+J_{2}^\eps(\th)\right)\Big)d\th\\
&=\int_{\o_1}^{\o_2}2\mu\kappa_2\Big[\left\{-(1-\kappa_2)\cos[(1-\kappa_2)(\th-\ol\o)]+(\cos(\kappa_2\o)-\kappa_2\cos\o)\cos[(1+\kappa_2)(\th-\ol\o)]\right\}\\
&\quad\qquad\qquad\quad\times\left\{-(1+\kappa_2)\cos[(1+\kappa_2)(\th-\ol\o)]+(\cos(\kappa_2\o)+\kappa_2\cos\o)\cos[(1-\kappa_2)(\th-\ol\o)]\right\}\\
&\quad\qquad\qquad\quad+\left\{(1-\kappa_2)\sin[(1+\kappa_2)(\th-\ol\o)]-(\cos(\kappa_2\o)+\kappa_2\cos\o)\sin[(1-\kappa_2)(\th-\ol\o)]\right\}\\
&\quad\qquad\qquad\quad\times\left\{(1+\kappa_2)\sin[(1-\kappa_2)(\th-\ol\o)]-(\cos(\kappa_2\o)-\kappa_2\cos\o)\sin[(1+\kappa_2)(\th-\ol\o)]\right\}\Big]\\
&\quad\qquad+4\mu\kappa_2\Big[\,2\cos[(1-\kappa_2)(\th-\ol\o)]\cos[(1+\kappa_2)(\th-\ol\o)]\\
&\quad\qquad\qquad\quad\;\;\,-(\cos(\kappa_2\o)-\kappa_2\cos\o)\cos^2[(1+\kappa_2)(\th-\ol\o)]\\
&\quad\qquad\qquad\quad\;\;\,-(\cos(\kappa_2\o)+\kappa_2\cos\o)\cos^2[(1-\kappa_2)(\th-\ol\o)]\,\Big]\,d\th\\
&=\mu\gamma_2^{\rm s}.
\end{aligned}
$$
So, the convergence result \eqref{limit-gamma-eps} of $\gamma_i^\eps$ for each $i=1,2$ is shown.

\vspace{0.1cm}

Next, we consider the following convergence of the quantity $\C_i^\eps(\f,{\bf g})$ in \eqref{gamma_eps}:
\begin{equation}\label{limit-Ci-eps}
\lim_{\eps\rightarrow 0^+}\C_i^\eps(\f,{\bf g})=\C_i^{\rm s}(\f,0,{\bf g}),
\end{equation}
where $\C_i^{\rm s}(\f,0,{\bf g})$ is defined in \eqref{Stokes-C}.
Actually, we have
$$
\C_i^\eps(\f,{\bf g})-\C_i^{\rm s}(\f,0,{\bf g})=F_i-\sum_{j=1}^J(G_{i,j}^-+G_{i,j}^+),
$$
where
\begin{equation}\label{penalty-FG-definition}
\begin{aligned}
F_i&:=\int_\O\f\cdot\left(\left({\widetilde\bPh}_i^\eps-\mu{\widetilde\bPh}_i^{\rm s}\right)+\left(\bPs_i^\eps-\bPs_i^{\rm s}\right)\right)d\x,\\
G_{i,j}^-&:=\int_{\G_j}\left(\m\,{\bf g}_j\cdot\frac{\partial}{\partial{\bf n}_j}\left({\widetilde\bPh}_i^\eps-\mu{\widetilde\bPh}_i^{\rm s}\right)+\left({\bf g}_j\cdot{\bf n}_j\right)\left(\eps^{-1}\div\,{\widetilde\bPh}_i^\eps+\mu{\widetilde\phi}_i^{\rm s}\right)\right)dS,\\
G_{i,j}^+&:=\int_{\G_j}\left(\m\,{\bf g}_j\cdot\frac{\partial}{\partial{\bf n}_j}\left({\bPs}_i^\eps-{\bPs}_i^{\rm s}\right)+\left({\bf g}_j\cdot{\bf n}_j\right)\left(\eps^{-1}\div\,{\bPs}_i^\eps+{\psi}_i^{\rm s}\right)\right)dS.
\end{aligned}
\end{equation}
Applying the penalty method to \eqref{Stokes-ps}, we see that for $\eps>0$,
\begin{equation}\label{penalty-ps}
\left\{\begin{aligned}
 -\mu\Delta{\widetilde\bPs}^\eps_i+\nabla{\widetilde\psi}^\eps_i&={\bf 0}&&\mbox{ in }\Omega,\\
 \div\,{\widetilde\bPs}^\eps_i&=-\eps{\widetilde\psi}^\eps_i&&\mbox{ in }\Omega,\\
 {\widetilde\bPs}^\eps_i&={\bf 0}&&\mbox{ on }\Gamma_j,\quad j=1,J,\\
 {\widetilde\bPs}^\eps_i&=-\mu{\widetilde\bPh}^{\rm s}_i&&\mbox{ on }\Gamma_j,\quad j=2,\ldots,J-1.
\end{aligned}\right.
\end{equation}
Subtracting \eqref{penalty-ps} from \eqref{Stokes-ps} and letting ${\boldsymbol\E}_1:=\bPs_i^{\rm s}-{\widetilde\bPs}^\eps_i$ and $e_1:=\psi_i^{\rm s}-{\widetilde\psi}^\eps_i$, we have
\begin{equation}\label{penalty-ps-error}
\left\{\begin{aligned}
 -\mu\Delta{\boldsymbol\E}_1+\nabla e_1&={\bf 0}&&\mbox{ in }\Omega,\\
 \div\,{\boldsymbol\E}_1&=\eps{\widetilde\psi}^\eps_i&&\mbox{ in }\Omega,\\
 {\boldsymbol\E}_1&={\bf 0}&&\mbox{ on }\Gamma.
\end{aligned}\right.
\end{equation}
Let $3/2<l<1+\min\{\l_1^\eps,\kappa_1\}$. By \cite[Theorem 2.1]{CKT}, the difference pair $\left[{\boldsymbol\E}_1,e_1\right]\in{\bf H}_0^l(\Omega)\times\bar{\rm H}^{l-1}(\Omega)$ of \eqref{penalty-ps-error} satisfies
$$
\begin{aligned}
\mu\|{\boldsymbol\E}_1\|_{l,\O}+\|e_1\|_{l-1,\O}&\leq C\mu\epsilon\|{\widetilde\psi}^\eps_i\|_{l-1,\O}\\
&\leq C\mu\epsilon\|e_1\|_{l-1,\O}+C\mu\eps\|\psi_i^{\rm s}\|_{l-1,\O}.
\end{aligned}
$$
Assuming that the penalty parameter $\eps>0$ is sufficiently small with $C\mu\eps<1/2$, by \eqref{Stokes-psi-estimate}, we have
\begin{equation}\label{E1-convergence-estimate}
\mu\|{\boldsymbol\E}_1\|_{l,\O}+2^{-1}\|e_1\|_{l-1,\O}\leq C\mu\eps\|\psi_i^{\rm s}\|_{l-1,\O}\rightarrow 0\qquad\mbox{ as }\,\eps\rightarrow 0^+.
\end{equation}
Also, the system \eqref{penalty-ps} gives the following equivalent elliptic problem:
\begin{equation}\label{lame-eps-ps}
\left\{\begin{aligned}
 -\mu\Delta{\widetilde\bPs}^\eps_i-\eps^{-1}\nabla\div\,{\widetilde\bPs}^\eps_i&={\bf 0}&&\mbox{ in }\Omega,\\
 {\widetilde\bPs}^\eps_i&={\bf 0}&&\mbox{ on }\Gamma_j,\quad j=1,J,\\
 {\widetilde\bPs}^\eps_i&=-\mu{\widetilde\bPh}^{\rm s}_i&&\mbox{ on }\Gamma_j,\quad j=2,\ldots,J-1.
\end{aligned}\right.
\end{equation}
Subtracting \eqref{lame-eps-ps} from \eqref{Psi-eps-prob} and letting ${\boldsymbol\E}_2:=\bPs_i^\eps-{\widetilde\bPs}_i^\eps$, we have
\begin{equation}\label{lame-eps-ps-error}
\left\{\begin{aligned}
 -\mu\Delta{\boldsymbol\E}_2-\eps^{-1}\nabla\div\,{\boldsymbol\E}_2&={\bf 0}&&\mbox{ in }\Omega,\\
 {\boldsymbol\E}_2&={\bf 0}&&\mbox{ on }\Gamma_j,\quad j=1,J,\\
 {\boldsymbol\E}_2&=-{\widetilde\bPh}^\eps_i+\mu{\widetilde\bPh}^{\rm s}_i&&\mbox{ on }\Gamma_j,\quad j=2,\ldots,J-1.
\end{aligned}\right.
\end{equation}
By Theorem \ref{thm2.2} and $\lim_{\eps\rightarrow 0}\|{\widetilde\bPh}^\eps_i-\mu{\widetilde\bPh}^{\rm s}_i\|_{l-1/2,\G_j}=0$ for $j=2,\ldots J-1$, the difference ${\boldsymbol\E}_2$ of \eqref{lame-eps-ps-error} satisfies
\begin{equation}\label{E2-convergence-estimate}
\mu\|{\boldsymbol\E}_2\|_{l,\O}+\eps^{-1}\|\div\,{\boldsymbol\E}_2\|_{l-1,\O}\leq C\mu\sum_{j=2}^{J-1}\left\|{\widetilde\bPh}^\eps_i-\mu{\widetilde\bPh}^{\rm s}_i\right\|_{l-1/2,\G_j}\rightarrow 0\qquad\mbox{ as }\,\eps\rightarrow 0^+.
\end{equation}
Combining \eqref{E1-convergence-estimate} with \eqref{E2-convergence-estimate}, since ${\widetilde\psi}_i^\eps=-\eps^{-1}\div\,{\widetilde\bPs}_i^\eps$ in $\O$, we have
\begin{align}
&\mu\|\bPs_i^\eps-\bPs_i^{\rm s}\|_{l,\O}+\|\eps^{-1}\div\,\bPs_i^\eps+\psi_i^{\rm s}\|_{l-1,\O}\nbr\\
&\qquad\leq \mu\|{\boldsymbol\E}_1\|_{l,\O}+\mu\|{\boldsymbol\E}_2\|_{l,\O}+\|e_1\|_{l-1,\O}+\eps^{-1}\|\div\,{\boldsymbol\E}_2\|_{l-1,\O}\rightarrow 0\qquad\mbox{ as }\,\eps\rightarrow 0^+.\label{E-combined-convergence}
\end{align}
Since $\lim_{\eps\rightarrow 0^+}\|{\widetilde\bPh}_i^\eps-\mu{\widetilde\bPh}_i^{\rm s}\|_{0,\O}=0$, by \eqref{E-combined-convergence}, we obtain
\begin{equation}\label{Fi-convergence-result}
|F_i|\leq C\|\f\|_{0,\O}\left(\|{\widetilde\bPh}_i^\eps-\mu{\widetilde\bPh}_i^{\rm s}\|_{0,\O}+\|\bPs_i^\eps-\bPs_i^{\rm s}\|_{0,\O}\right)\rightarrow 0\qquad\mbox{ as }\,\eps\rightarrow 0^+.
\end{equation}
Let $m_i:=\max\{\lambda_i^\eps,\kappa_i\}$.
A direct calculation gives that
\begin{align}
\nabla({\widetilde\bPh}_i^\eps-\mu{\widetilde\bPh}_i^{\rm s})&=r^{-\lambda_i^\eps-1}{\bf M}_i^\eps(\th)-r^{-\kappa_i-1}{\bf M}_i^{\rm s}(\th)\nbr\\
&=r^{-m_i-1}{\widetilde{\bf M}}^\eps_i(r,\th),\label{penalty-note1}
\end{align}
where
$$
\begin{aligned}
{\bf M}_i^\eps(\th)&:=-\lambda_i^\eps\eb_r(\th)({\widetilde\bT}_i^\eps(\th))^t+\eb_\th(\th)(\pd_\th{\widetilde\bT}_i^\eps(\th))^t,\\
{\bf M}_i^{\rm s}(\th)&:=-\kappa_i\eb_r(\th)({\widetilde\bT}_i^{\rm s}(\th))^t+\eb_\th(\th)(\pd_\th{\widetilde\bT}_i^{\rm s}(\th))^t,\\
{\widetilde{\bf M}}^\eps_i(r,\th)&:=r^{m_i-\lambda_i^\eps}\left({\bf M}_i^\eps(\th)-{\bf M}_i^{\rm s}(\th)\right)+\big(r^{m_i-\lambda_i^\eps}-r^{m_i-\kappa_i}\big){\bf M}_i^{\rm s}(\th).
\end{aligned}
$$
Also, by a direct calculation, we have
\begin{equation}\label{eps_div_dual}
\eps^{-1}\div{\widetilde\bPh}_i^\eps=\left\{\begin{aligned}
&4\mu\lambda_1^\eps r^{-\lambda_1^\eps-1}\sin[(1+\lambda_1^\eps)(\th-\ol\o)]&&\qquad\mbox{ for }\,i=1,\\
&4\mu\lambda_2^\eps r^{-\lambda_2^\eps-1}\cos[(1+\lambda_2^\eps)(\th-\ol\o)]&&\qquad\mbox{ for }\,i=2,
\end{aligned}\right.
\end{equation}
yielding
\begin{align}
\eps^{-1}\div{\widetilde\bPh}_i^\eps+\mu{\widetilde\phi}_i^{\rm s}&=4\mu\left(r^{-\l_i^\eps-1}N_i^\eps(\th)-r^{-\kappa_i-1}N_i^{\rm s}(\th)\right)\nbr\\
&=4\mu r^{-m_i-1}{\widetilde N}_i^\eps(r,\th),\label{penalty-note2}
\end{align}
where
$$
\begin{aligned}
N_i^\eps(\th)&:=\left\{\begin{aligned}
&\l_i^\eps\sin[(1+\l_i^\eps)(\th-\ol\omega)]&&\qquad\mbox{ for }\,i=1,\\
&\l_i^\eps\cos[(1+\l_i^\eps)(\th-\ol\omega)]&&\qquad\mbox{ for }\,i=2,
\end{aligned}\right.\\
N_i^{\rm s}(\th)&:=\left\{\begin{aligned}
&\kappa_i\sin[(1+\kappa_i)(\th-\ol\omega)]&&\qquad\mbox{ for }\,i=1,\\
&\kappa_i\cos[(1+\kappa_i)(\th-\ol\omega)]&&\qquad\mbox{ for }\,i=2,
\end{aligned}\right.\\
{\widetilde N}_i^\eps(r,\th)&:= r^{m_i-\lambda_i^\eps}\left(N_i^\eps(\th)-N_i^{\rm s}(\th)\right)+\left(r^{m_i-\lambda_i^\eps}-r^{m_i-\kappa_i}\right)N_i^{\rm s}(\th).
\end{aligned}
$$
Using \eqref{penalty-note1} and \eqref{penalty-note2}, since ${\bf g}_1(S_1)={\bf 0}$ and $Q_i^\eps:=|{\widetilde{\bf M}}_i^\eps|_\infty+|{\widetilde N}_i^\eps|_\infty\rightarrow 0$ as $\eps\rightarrow 0^+$, we obtain
\begin{align}
|G_{i,1}^-|&\leq C\mu Q_i^\eps\int_0^{r_*}\left|{\bf g}_1(r\cos\omega_1,r\sin\omega_2)\right|r^{-m_i-1}\,dr\nbr\\
&=C \mu Q_i^\eps\int_0^{r_*}\left|\frac{1}{r}\int_0^r \frac{d}{dr}{\bf g}_1(r_1\cos\o_1,r_1\sin\o_1)\,dr_1\right| r^{-m_i}\,dr\nbr\\
&\leq C \mu Q_i^\eps\left(\int_0^{r_*}\left|\frac{1}{r}\int_0^r\frac{d}{dr}{\bf g}_1(r_1\cos\o_1,r_1\sin\o_1)\,dr_1\right|^p dr\right)^{1/p}\left(\int_0^{r_*}r^{-m_i q}\,dr\right)^{1/q}\nbr\\
&\leq C \mu Q_i^\eps\left(\int_0^{r_*}\left|\frac{d}{dr}{\bf g}_1(r\cos\o_1,r\sin\o_1)\right|^p dr\right)^{1/p}\qquad\mbox{(by Hardy's inequality)}\nbr\\
&\leq C\mu Q_i^\eps\|{\bf g}_1\|_{1,p,\G_1} \leq C\mu Q_i^\eps\|{\bf g}_1\|_{3/2,\G_1}\rightarrow 0\qquad\mbox{ as }\,\eps\rightarrow 0^+,\label{Gi1-convergence-result}
\end{align}
where $p=1/\d$ and $q=(1-\d)^{-1}$ for $0<\d<1-m_i$.
Similarly, 
\begin{equation}\label{GiJ-convergence-result}
|G_{i,J}^-|\leq C\mu Q_i^\eps\|{\bf g}_J\|_{3/2,\G_J}\rightarrow 0\qquad\mbox{ as }\,\eps\rightarrow 0^+.
\end{equation}
Furthermore, for $j=2,3,\ldots,J-1$,
\begin{align}
|G_{i,j}^-|&\leq C\mu Q_i^\eps\int_{\G_j}|{\bf g}_j|r^{-m_i-1}\,dS\nbr \leq C\mu Q_i^\eps\|{\bf g}_j\|_{0,\G_j}\|r^{-m_i-1}\|_{0,\G_j}\nbr\\
&\leq C\mu Q_i^\eps\|{\bf g}_j\|_{3/2,\G_j}\rightarrow 0\qquad\mbox{ as }\,\eps\rightarrow 0^+.\label{Gij-convergence-result}
\end{align}
By the Cauchy-Schwarz inequality and \eqref{E-combined-convergence}, 
\begin{align}
|G_{i,j}^+|&\leq C\|{\bf g}_j\|_{0,\G_j}\left(\m\|\bPs_i^\eps-\bPs^{\rm s}_i\|_{1,\G_j}+\|\eps^{-1}\div\bPs_i^\eps+\psi^{\rm s}_i\|_{0,\G_j}\right)\nbr\\
&\leq C\|{\bf g}_j\|_{3/2,\G_j}\left(\m\|\bPs_i^\eps-\bPs^{\rm s}_i\|_{l,\O}+\|\eps^{-1}\div\bPs_i^\eps+\psi^{\rm s}_i\|_{l-1,\O}\right)\rightarrow 0\qquad\mbox{ as }\,\eps\rightarrow 0^+,\label{Gij-plus-convergence-result}
\end{align}
where $3/2<l<1+\min\{\l_1^\eps,\kappa_1\}$. Using \eqref{Fi-convergence-result} and \eqref{Gi1-convergence-result}--\eqref{Gij-plus-convergence-result}, we obtain
$$
\begin{aligned}
\left|\C_i^\eps(\f,{\bf g})-\C_i^{\rm s}(\f,0,{\bf g})\right|&\leq |F_i|+\sum_{j=1}^J(|G_{i,j}^-|+|G_{i,j}^+|)\rightarrow 0\qquad\mbox{ as }\,\eps\rightarrow 0^+,
\end{aligned}
$$
and then the convergence result \eqref{limit-Ci-eps} is shown.

Moreover, we consider the following convergence of $\C_*^\eps$ in \eqref{gamma_eps}:
\begin{equation}\label{limit-Cstar-eps}
\lim_{\eps\rightarrow 0^+}\C_*^\eps=\mu\C_*^{\rm s},
\end{equation}
where $\C_*^{\rm s}$ is defined in \eqref{Stokes-C}.
Note that
$$
\begin{aligned}
\C_*^\eps-\mu\C_*^{\rm s}&=\sum_{j=2}^{J-1}\left(K_j^1+K_j^2+K_j^3+K_j^4\right),
\end{aligned}
$$
where
$$
\begin{aligned}
K_j^1&:=\int_{\G_j}\left(\mu\bPh_1^\eps\cdot\frac{\partial}{\partial{\bf n}_j}\left({\widetilde\bPh}_2^\eps-\mu{\widetilde\bPh}_2^{\rm s}\right)+\left(\bPh_1^\eps\cdot{\bf n}_j\right)\left(\eps^{-1}\div{\widetilde\bPh}_2^\eps+\mu{\widetilde\phi}_2^{\rm s}\right)\right)dS,\\
K_j^2&:=\int_{\G_j}\left(\mu\bPh_1^\eps\cdot\frac{\partial}{\partial{\bf n}_j}\left(\bPs_2^\eps-\bPs_2^{\rm s}\right)+\left(\bPh_1^\eps\cdot{\bf n}_j\right)\left(\eps^{-1}\div\bPs_2^\eps+\ps_2^{\rm s}\right)\right)dS,\\
K_j^3&:=\int_{\G_j}\left(\mu\left(\bPh_1^\eps-\mu\bPh_1^{\rm s}\right)\cdot\mu\frac{\partial}{\partial{\bf n}_j}{\widetilde\bPh}_2^{\rm s}-\left(\left(\bPh_1^\eps-\mu\bPh_1^{\rm s}\right)\cdot{\bf n}_j\right)\mu{\widetilde\phi}_2^{\rm s}\right)dS,\\
K_j^4&:=\int_{\G_j}\left(\mu\left(\bPh_1^\eps-\mu\bPh_1^{\rm s}\right)\cdot\frac{\partial}{\partial{\bf n}_j}\bPs_2^{\rm s}-\left(\left(\bPh_1^\eps-\mu\bPh_1^{\rm s}\right)\cdot{\bf n}_j\right)\ps_2^{\rm s}\right)dS.
\end{aligned}
$$
By \eqref{penalty-note1} and \eqref{penalty-note2}, since $Q_i^\eps:=|{\widetilde{\bf M}}_i^\eps|_\infty+|{\widetilde N}_i^\eps|_\infty\rightarrow 0$ as $\eps\rightarrow 0^+$, for $j=2,\ldots,J-1$,
\begin{align}
|K_j^1|&\leq C\mu Q_i^\eps\int_{\G_j}r^{\l_1^\eps-m_2-1}\,dS\nbr\\
&\leq C\mu Q_i^\eps (r_{min})^{\l_1^\eps-m_2-1}\rightarrow 0\qquad\mbox{ as }\,\eps\rightarrow 0^+,\label{K1-convergence-result}
\end{align}
where $r_{min}>0$ is the number defined by $r_{min}=\inf\{|{\bf x}|:{\bf x}\in\G_j\mbox{ for }j=2,\ldots,J-1\}$.
Also, since $|\bPh_1^\eps|\leq Cr^{\l_1^\eps}$, by \eqref{E-combined-convergence}, we obtain that for $j=2,\ldots,J-1$,
\begin{align}
|K_j^2|&\leq C\int_{\G_j}r^{\l_1^\eps}\left(\mu|\nabla(\bPs_2^\eps-\bPs_2^{\rm s})|+|\eps^{-1}\div\bPs_2^\eps+\psi_2^{\rm s}|\right)dS\nbr\\
&\leq C\|r^{\l_1^\eps}\|_{0,\G_j}\left(\mu\|\bPs_2^\eps-\bPs_2^{\rm s}\|_{1,\G_j}+\|\eps^{-1}\div\bPs_2^\eps+\psi_2^{\rm s}\|_{0,\G_j}\right)\nbr\\
&\leq C\left(\mu\|\bPs_2^\eps-\bPs_2^{\rm s}\|_{l,\O}+\|\eps^{-1}\div\bPs_2^\eps+\psi_2^{\rm s}\|_{l-1,\O}\right)\rightarrow 0\qquad\mbox{ as }\,\eps\rightarrow 0^+.\label{K2-convergence-result}
\end{align}
Since $\mu|\partial{\widetilde\bPh}_2^{\rm s}/\partial{\bf n}_j|+|{\widetilde\phi}_2^{\rm s}|\leq Cr^{-\kappa_2-1}$ and $\lim_{\eps\rightarrow 0^+}\|\bPh_1^\eps-\mu\bPh_1^{\rm s}\|_{0,\G_j}=0$, for $j=2,\ldots,J-1$,
\begin{align}
|K_j^3|&\leq C\mu\int_{\G_j}|\bPh_1^\eps-\mu\bPh_1^{\rm s}|r^{-\kappa_2-1}dS\nbr\\
&\leq C\mu(r_{min})^{-\kappa_2-1}\|\bPh_1^\eps-\mu\bPh_1^{\rm s}\|_{0,\G_j}\rightarrow 0\qquad\mbox{ as }\,\eps\rightarrow 0^+,\label{K3-convergence-result}
\end{align}
and by \eqref{Stokes-psi-estimate},
\begin{align}
|K_j^4|&\leq C\|\bPh_1^\eps-\mu\bPh_1^{\rm s}\|_{0,\G_j}\left(\mu\|\bPs_2^{\rm s}\|_{1,\G_j}+\|\psi_2^{\rm s}\|_{0,\G_j}\right)\nbr\\
&\leq C\|\bPh_1^\eps-\mu\bPh_1^{\rm s}\|_{0,\G_j}\left(\mu\|\bPs_2^{\rm s}\|_{l,\O}+\|\psi_2^{\rm s}\|_{l-1,\O}\right)\rightarrow 0\qquad\mbox{ as }\,\eps\rightarrow 0^+.\label{K4-convergence-result}
\end{align}
Appealing to \eqref{K1-convergence-result}--\eqref{K4-convergence-result}, the proof of \eqref{limit-Cstar-eps} is complete.

In conclusion, the convergence results \eqref{limit-gamma-eps} and \eqref{limit-Ci-eps} give
$$
\begin{aligned}
&c_1^\eps-\m^{-1}c_1^{\rm s} =\frac{\C_1^\eps(\f,{\bf g})}{\g_1^\eps}-\frac{\C_1^{\rm s}(\f,0,{\bf g})}{\m\g_1^{\rm s}}=\frac{\m\g_1^{\rm s}\C_1^\eps(\f,{\bf g})-\g_1^\eps\C_1^{\rm s}(\f,0,{\bf g})}{\mu\g_1^\eps\g_1^{\rm s}}\\
&\qquad=\frac{1}{\m\g_1^\eps\g_1^{\rm s}}\left[\left(\m\g_1^{\rm s}-\g_1^\eps\right)\C_1^\eps(\f,{\bf g})+\g_1^\eps\left(\C_1^\eps(\f,{\bf g})-\C_1^{\rm s}(\f,0,{\bf g})\right)\right]\rightarrow 0\qquad\mbox{ as }\,\eps\rightarrow 0^+,
\end{aligned}
$$
and furthermore, by \eqref{limit-Cstar-eps},
$$
\begin{aligned}
&c_2^\eps-\m^{-1}c_2^{\rm s} =\frac{\C_2^\eps(\f,{\bf g})+c_1^\eps\C_*^\eps}{\g_2^\eps}-\frac{\C_2^{\rm s}(\f,0,{\bf g})+ c_1^{\rm s}\C_*^{\rm s}}{\m\g_2^{\rm s}}\\
&\qquad=\frac{\m\g_2^{\rm s}\C_2^\eps(\f,{\bf g})+\m\g_2^{\rm s}c_1^\eps\C_*^\eps-\g_2^\eps\C_2^{\rm s}(\f,0,{\bf g})-\g_2^\eps c_1^{\rm s}\C_*^{\rm s}}{\m\g_2^\eps\g_2^{\rm s}}\\
&\qquad=\frac{1}{\m\g_2^\eps\g_2^{\rm s}}\left[\left(\m\g_2^{\rm s}-\g_2^\eps\right)\C_2^\eps(\f,{\bf g})+\g_2^\eps\left(\C_2^\eps(\f,{\bf g})-\C_2^{\rm s}(\f,0,{\bf g})\right)\right]\\
&\qquad\quad\,+\frac{1}{\m\g_2^\eps\g_2^{\rm s}}\left[\left(\m\g_2^{\rm s}-\g_2^\eps\right)c_1^\eps\C_*^\eps+\g_2^\eps\left(c_1^\eps-\m^{-1}c_1^{\rm s}\right)\C_*^\eps+\g_2^\eps \mu^{-1} c_1^{\rm s}\left(\C_*^\eps-\mu\C_*^{\rm s}\right)\right]\\
&\qquad\rightarrow 0\qquad\mbox{ as }\,\eps\rightarrow 0^+.
\end{aligned}
$$
So, the proof of \eqref{sif_eps_convergence} in Theorem \ref{thm1.1} is done.
\qed

\vspace{0.5cm}

\noindent\underline{\bf Proof of \eqref{regular_eps_convergence} in Theorem \ref{thm1.1}.}

\vspace{0.1cm}

In this proof, it is assumed that $M=2$.
With $c_1^\eps$ and $c_2^\eps$ obtained by \eqref{coeff_eps_formula}, the penalized system \eqref{Lame_eps} and its related corner singularity expansion \eqref{decomposition_eps} give the regular part $\w^\eps$ in \eqref{decomposition_eps} that satisfies
\begin{equation}\label{regular-eps}
\left\{\begin{aligned}
-\mu\Delta\w^\eps-\frac{1}{\eps}\nabla\div\,\w^\eps&=\f&&\mbox{ in }\O,\\
\w^\eps&={\bf g}-\sum_{i=1}^2 c_i^\eps \,\bPh_i^\eps&&\mbox{ on }\G.
\end{aligned}\right.
\end{equation}
Owing to \eqref{Stokes-decomposition} with $M=2$, the Stokes problem \eqref{incomp_stokes} becomes
\begin{equation}\label{regular_incomp_stokes}
\left\{\begin{aligned}
-\mu\Delta\w^{\rm s}+\nabla\sigma^{\rm s}&=\f&&\mbox{ in }\O,\\
\div\,\w^{\rm s}&=0&&\mbox{ in }\O,\\
\w^{\rm s}&={\bf g}-\sum_{i=1}^{2}c_i^{\rm s}\,\bPh_i^{\rm s}&&\mbox{ on }\Gamma.
\end{aligned}\right.
\end{equation}
Applying the penalty method to \eqref{regular_incomp_stokes}, we have the following elliptic problem as seen in \eqref{Lame_eps}:
\begin{equation}\label{tilde-regular-eps}
\left\{\begin{aligned}
-\mu\Delta{\widetilde\w}^\eps-\frac{1}{\eps}\nabla\div\,{\widetilde\w}^\eps&=\f&&\mbox{ in }\O,\\
{\widetilde\w}^\eps&={\bf g}-\sum_{i=1}^2 c_i^{\rm s}\,\bPh_i^{\rm s}&&\mbox{ on }\G.
\end{aligned}\right.
\end{equation}
Letting ${\widetilde\sigma}^\eps=-\eps^{-1}\div\,{\widetilde\w}^\eps$, by Theorem 5.3 in \cite[Chapter 1]{GRF}, 
\begin{equation}\label{w-tilde-eps-ineq}
\mu\|{\widetilde\w}^\eps-\w^{\rm s}\|_{1,\O}+\|{\widetilde\sigma}^\eps-\sigma^{\rm s}\|_{0,\O}\leq C\eps\left(\|\f\|_{-1,\O}+\mu\|{\bf g}\|_{1/2,\G}+\mu\sum_{i=1}^2\left|c_i^{\rm s}\right|\left\|\bPh_i^{\rm s}\right\|_{1/2,\G}\right).
\end{equation}
From Theorem \ref{thm4.3}, note that $\left|c_i^{\rm s}\right|<\infty$, and by the trace theorem, $\|\bPh_i^{\rm s}\|_{1/2,\G}\leq C\|\bPh_i^{\rm s}\|_{1,\O}<\infty$. Hence, the inequality \eqref{w-tilde-eps-ineq} becomes
\begin{equation}\label{w-tilde-eps-ineq-final}
\mu\|{\widetilde\w}^\eps-\w^{\rm s}\|_{1,\O}+\|{\widetilde\sigma}^\eps-\sigma^{\rm s}\|_{0,\O}\rightarrow 0\qquad\mbox{ as }\,\eps\rightarrow 0^+.
\end{equation}

Next, we consider the convergence of the differences ${\boldsymbol\E}:=\w^\eps-{\widetilde\w}^\eps$ and $e:=\sigma^\eps-{\widetilde\sigma}^\eps$.
Subtracting \eqref{tilde-regular-eps} from \eqref{regular-eps}, 
\begin{equation}\label{E-eq}
\left\{\begin{aligned}
-\mu\Delta{\boldsymbol\E}-\frac{1}{\eps}\nabla\div\,{\boldsymbol\E}&={\bf 0}&&\mbox{ in }\O,\\
{\boldsymbol\E}&=\sum_{i=1}^2 \left(c_i^{\rm s}\,\bPh_i^{\rm s}-c_i^\eps\,\bPh_i^\eps\right)&&\mbox{ on }\G.
\end{aligned}\right.
\end{equation}
Let $1<l<1+\min\{\lambda_1^\eps,\kappa_1\}$.
By Theorem \ref{thm2.2}, the difference ${\boldsymbol\E}$ of \eqref{E-eq} satisfies
\begin{align}
&\mu\|{\boldsymbol\E}\|_{l,\Omega}+\eps^{-1}\|\div\,{\boldsymbol\E}\|_{l-1,\Omega}\nbr\\
&\qquad\leq C\m\left\|\,\sum_{i=1}^2\left(c_i^{\rm s}\,\bPh_i^{\rm s}-c_i^\eps\,\bPh_i^\eps\right)\right\|_{l-1/2,\Gamma}\nbr\\
&\qquad\leq C\mu\sum_{i=1}^2 |\mu^{-1}c_i^{\rm s}-c_i^\eps|\|\bPh_i^\eps\|_{l-1/2,\Gamma}+C\sum_{i=1}^2 |c_i^{\rm s}|\|\mu\bPh_i^{\rm s}-\bPh_i^\eps\|_{l-1/2,\Gamma}.\label{E-estimate1}
\end{align}
Here, the convergence \eqref{sif_eps_convergence} implies that $|\mu^{-1}c_i^{\rm s}-c_i^\eps|\rightarrow 0$ as $\eps\rightarrow 0^+$, and also Theorem \ref{thm4.3} gives that $|c_i^{\rm s}|<\infty$.
By the extended trace theorem, $\|\bPh_i^\eps\|_{l-1/2,\G}\leq C\|\bPh_i^\eps\|_{l,\O}<\infty$, and by a direct calculation, 
$$
\lim_{\eps\rightarrow 0^+}\|\mu\bPh_i^{\rm s}-\bPh_i^\eps\|_{l-1/2,\G}=0.
$$
Hence, with $e=-\eps^{-1}\div\,{\boldsymbol\E}$, the inequality \eqref{E-estimate1} becomes
\begin{equation}\label{E-ineq-final}
\mu\|{\boldsymbol\E}\|_{l,\Omega}+\|e\|_{l-1,\Omega}=\mu\|{\boldsymbol\E}\|_{l,\Omega}+\eps^{-1}\|\div\,{\boldsymbol\E}\|_{l-1,\Omega}\rightarrow 0\qquad\mbox{ as }\,\eps\rightarrow 0^+.
\end{equation}
Combining \eqref{w-tilde-eps-ineq-final} with \eqref{E-ineq-final}, the desired convergence result \eqref{regular_eps_convergence} follows.

\qed

\vspace{0.7cm}

%%%%%%%%%%%%%%%%%%%%%%%%%%%%%%%%%%%%%%%%%%%%%%%%%%%%%%%%%%%%%%%%%%%%%%%%%%%%%%

\section*{Acknowledgements}\thispagestyle{empty}
H. J. Choi was supported by the National Research Foundation of Korea (grant RS-2023-00280065) and Education and Research Promotion program of KOREATECH in 2023.
S. Kim was supported by the National Research Foundation of Korea (grant NRF-2022R1F1A1063379, RS-2023-00217116) and Korea Institute for Advanced Study(KIAS) grant funded by the Korea government (MSIP).
Y. Koh was supported by the National Research Foundation of Korea (grant NRF-2022R1F1A1061968).

\vspace{0.2cm}

%%%%%%%%%%%%%%%%%%%%%%%%%%%%%%%%%%%%%%%%%%%%%%%%%%%%%%%%%%%%%%%%%%%%%%%%%%%%%%%%%%%%%%%%%%%%%%%%%%%%%

\end{document}